\def\squarebox#1{\hbox to #1{\hfill\vbox to #1{\vfill}}}
\DeclareMathOperator{\re}{Re}
\DeclareMathOperator{\im}{Im}
\DeclareMathOperator{\op}{Op}
\DeclareMathOperator{\symb}{symb}
\newcommand{\1}{{\bold 1}}
\theoremstyle{plain}
\newtheorem{thm}{Theorem}
\newtheorem{cor}{Corollary}
\newtheorem{lem}{Lemma}
\newtheorem{rem}{Remark}
\newtheorem{prop}{Proposition}
\newtheorem{defn}{Definition}
\renewcommand{\Re}{\mathop{\rm Re}\nolimits}
\renewcommand{\Im}{\mathop{\rm Im}\nolimits}
\def\pu{\partial_t u}
\def\ppu{\partial_t^2 u}
\def\la{\langle}
\def\ra{\rangle}
\def\el{e^{-2\lambda t}}
\def\ell{e^{-\lambda t}}
\def\12{\frac{1}{2}}
\def\ii{{\bf i}}
\def\phi{\varphi}
\def\epsilon{\varepsilon}
\def\kappa{\varkappa}
\def\ep{\epsilon}
\def\b0{{\bf b}_2^{\ep}}
\numberwithin{equation}{section}
\begin{document}

\def\b0{{\bf b}_2^{\ep}}
\def\R {{\mathbb{R}}}
\def\N {{\mathbb{N}}}
\def\C {{\mathbb{C}}}
\def\Z {{\mathbb{Z}}}
\def\T{{\mathbb T}}
\def\Q{{\mathbb Q}}
\def\SP{{\mathbb S}}
\def\d{{\partial}}
\def\mc{{\mathcal H}}
\def\1b{{\mathbb I}}
\def\pv{\partial_x V}
\def\pu{\partial_t u}
\def\ppu{\partial_t^2 u}
\def\es{e^{-\lambda s }}
\def\a2e{a_2^{\epsilon}(t, x, D_x)}
\def\3a{a_3^{\epsilon}(t, x, D_x)}
\def\oa{a_1^{\epsilon}(t, x, D_x)}
\def\pt{\partial_t}
\def\pa{\partial}
\def\aee{a_2^{\ep}}
\def\N{\frac{2N}{3}}
\def\aez{\alpha_0^{\ep}}
%


%
%
 \vbadness=10000
 \hbadness=10000
%
%

%
%
%

%

\def\phi{\varphi}
\def\epsilon{\varepsilon}
\def\kappa{\varkappa}
\def\bv {b_1(t,\xi)u}
\def\ia{t^2 a_3(t, \xi) \im (u \bar{u}'')}
\def\buu{\bar{u}''}
\def\bu{\bar{u}'}
\def\eT{e^{-\lambda T}}
\def\ii{{\bf i}}
\def\hh{\hat{x}}
\def\hhx{\hat{\xi}}
\def\12{\frac{1}{2}}
\def\Rc{{\mathcal R}}
\def\tE{\tilde{E}}
\def\ep{\epsilon}
\def\ae{\alpha^{\ep}}
\def\la{\langle}
\def\ra{\rangle}
\def\a2{a_2^{\ep}}
\def\fN{f^{-2 N}}
\def\fNt{f^{-N - 1/2}}
\def\fr{\frac}
%
\def\Qed{\qed\par\medskip\noindent}
%
\title[Cauchy problem for effectively hyperbolic operators ]{Cauchy problem for
  effectively hyperbolic operators with triple characteristics of variable
  multiplicity} 
\date{}
\author[E. Bernardi, A. Bove] {Enrico Bernardi, Antonio Bove}
\author[V. Petkov]{Vesselin Petkov}

\address{Dipartimento di Scienze Statistiche, Universit\`a di Bologna,
Viale Filopanti 5, 40126 Bologna, Italia}

\email{enrico.bernardi@unibo.it}
\address{Dipartimento di Matematica,  Universit\`a di Bologna,\:
Piazza di Porta S. Donato 5, 
40127 Bologna, Italia}
\email{bove@bo.infn.it}

\address{Universit\'e Bordeaux I, Institut de Math\'ematiques de Bordeaux, 351, 
Cours de la Lib\'eration, 33405  Talence, France}
\email{petkov@math.u-bordeaux1.fr}
\thanks{VP was partially supported by ANR project Nosevol BS01019 01}
\begin{abstract}
We study a class of third order hyperbolic operators $P$ in $G = \{(t, x):0 \leq t \leq T,\: x\in U \Subset \R^{n}\}$ with
triple characteristics at $\rho = (0, x_0, \xi),\: \xi \in \R^n \setminus \{0\}$. We consider the case when the
fundamental matrix of the principal symbol of $P$ at $\rho$ has a couple of
non-vanishing real eigenvalues. Such operators are called {\it effectively hyperbolic}. V. Ivrii introduced the conjecture that every effectively hyperbolic operator is {\it strongly hyperbolic}, that is the Cauchy problem for $P + Q$ is locally well posed for any lower order terms $Q$. This conjecture has been solved for operators having at most double characteristics and for operators with triple characteristics in the case when the principal symbol admits a factorization. A strongly hyperbolic operator in $G$ could have triple characteristics in $G$ only for $t = 0$ or for $t = T$.  We prove that the operators in our class are strongly hyperbolic if $T$ is small enough. Our proof is based on energy estimates with a loss of regularity. 
\end{abstract}
\subjclass[2010]{Primary 35L30, Secondary 35L25}
\keywords{Cauchy Problem, Effectively Hyperbolic Operators, Triple
  Characteristics, Energy Estimates}
\vspace{0.5cm}
\maketitle

\tableofcontents

\section{Introduction}
%
%
\setcounter{equation}{0}
\setcounter{thm}{0}
\setcounter{prop}{0}  
\setcounter{lem}{0}
\setcounter{cor}{0} 
\setcounter{defn}{0}
\setcounter{rem}{0}
%

\subsection{Notations and main result}

Consider a differential operator
\begin{equation}
P(t, x, D_t, D_x) = \sum _{\alpha + |\beta| \leq m} c_{\alpha, \beta}
(t, x) D_t^{\alpha} D_x^{\beta},\: D_t = -\ii\partial_t, D_{x_j} =
-\ii\partial_{x_j}
\end{equation}
of order $m$ with $C^{\infty}$ coefficients $c_{\alpha, \beta}(t,x),\:
t \in \R,\:x \in \R^n.$ Denote by 
$$
p_m(t, x, \tau, \xi) = \sum_{\alpha + |\beta| = m} c_{\alpha, \beta}
(t, x) \tau^{\alpha} \xi^{\beta} 
$$
the principal symbol of $P$. We assume that $c_{m, 0}(t, x) \neq 0$ for all $(t, x)$.
Let $\Omega \subset \R^{n + 1}$ be an open set and let 
$\Omega_{\eta}^{-} = \Omega \cap \{t \leq \eta\},\Omega_{\eta}^{+} =
\Omega \cap \{t \geq \eta\}, \:G =  \Omega \cap \{0 \leq t \leq T\}.$

Set $P_m(t, x, D_t, D_x) = p_m(t, x, D_t, D_x).$

\begin{defn} We say that the Cauchy problem
\begin{equation} \label{eq:1.2}
Pu = f\: {\rm in}\: \Omega \cap \{t < T\},\: {\rm supp}\: u \subset \overline{G}
\end{equation}
is well posed in $G$ if\\
$(i)$ $($existence$)$ for every $f \in C_0^{\infty}(\Omega), \: {\rm supp}\: f \subset \overline{\Omega_0^{+}}$ there
exists a solution $u \in {\mathcal D}'(\Omega)$ satisfying $(\ref{eq:1.2})$.\\
$(ii)$ $($uniqueness$)$ if $u \in {\mathcal D}'(\Omega)$ satisfies $(\ref{eq:1.2})$, then for every $s, 0 < s \leq T,$ if $Pu = 0$ in $\Omega_s^{-}$, then $u = 0$ in $\Omega_s^{-}$.
\end{defn}
A necessary condition for the well posedeness of the Cauchy problem (WPC) is the hyperbolicity of the operator $P$ in $ G$ (see \cite{IP} and the references cited there).  This means that for every $(t_0, x_0, \xi) \in G \times \R^n \setminus \{0\}$ the equation 
\begin{equation} \label{eq:1.1}
p_m(t_0, x_0, \tau, \xi) = 0
\end{equation}
 with respect to $\tau $ has only real roots  $\tau = \lambda_j(t_0,
 x_0, \xi)$.

\begin{defn} We say that the operator $P$ with principal symbol $p_m$
  is strongly hyperbolic in $G$ if for every point $z_0 = (t_0, x_0)
  \in G$ there exists a neighbourhood $U$ of $z_0$, $T(U) > 0$  and $T_0 \geq 0$
  ($T_0 < T$ if $t_0 = T$ and $T_0= 0$ if $t_0 = 0$) such that the
  Cauchy problem $(\ref{eq:1.2})$ for the operator $L = P_m(t, x, D_t,
  D_x) + Q_{m-1}(t, x, D_t, D_x)$ is well posed in $U^{+}_s$ for every
  $T_0 \leq s < T(U)$ and for any operator $Q_{m-1}(t, x, D_t, D_x)$
  of order less or equal to $m-1$. 
\end{defn}
When $P$ is strictly hyperbolic, that is when 
the equation (\ref{eq:1.1}) has simple roots $\lambda_j(t, x, \xi)$
with respect to the variable $ 
\tau $ for  all $(t, x, \xi)\in G \times \R^N \setminus \{0\}$, it is a classical
result that $P$ is strongly hyperbolic. If the equation (\ref{eq:1.1})
has real roots 
with constant multiplicity for $(t, x, \xi) \in G \times \R^n
\setminus \{0\}$, the operator $P$ is strongly hyperbolic {\bf if and
  only if} it is strictly hyperbolic.  Thus in the case of roots with
constant multiplicity---greater than 1---we must impose conditions on
the lower order terms $Q_{m-1}$, called Levi conditions, in order that
the Cauchy problem be well posed. The
analysis of the Cauchy problem for such operators is complete and we
know the necessary \cite{FS} and sufficient \cite{Ch} conditions for
(WPC).

Passing to the case when the roots of (\ref{eq:1.1}) have variable
multiplicity, notice that the roots $\lambda_j(t, x, \xi)$ in
general are not smooth but only continuous. The case of operators with
constant coefficients is also completely examined and $P$ is strongly
hyperbolic {\bf if and only if} $P$ is strictly hyperbolic. The
necessary and sufficient condition of G\aa rding for (WPC) says that
there exists a constant $c > 0$ such that for the full symbol $p(\tau, \xi)$ of
$P$ we have
$$p(\tau, \xi) \neq 0,\:{\rm for}\: |{\rm Im}\: \tau| > c,\: \forall
\xi \in \R^n \setminus \{0\}.$$

In the following, for the sake of simplicity, we switch to a different notation and  denote $t = x_0$,
$x = (x_0, x_1,...,x_n) \in \R^{n+1}.$ The dual variables are denoted by
$\xi = (\xi_0, \xi_1,...,\xi_n) = (\xi_0, \xi').$

Given a symbol $p(x, \xi)$,  let 
$$\Sigma(p) = \{z \in
 T^*{G} \setminus \{0\}:\: p(z) = 0\},\: \Sigma_1(p) = \{ z \in
 T^*(G) \setminus \{0\}: p(z) = 0,\: dp(z) = 0\}.$$
In the case $\Sigma_1(p_m) = \emptyset,$ the operator is of principal
type and a hyperbolic operator $P$ in $G$ is strongly hyperbolic (see \cite{I} and
Section 23.4 in \cite{h3}). 

Turning to the case $\Sigma_1(p_m) \neq \emptyset,$ notice that if we have a critical
 point $(\hh, \hhx) \in \Sigma_1(p)$, then the Hamiltonian system 
$$\frac{dx}{ds} = \partial_{\xi} p,  \qquad  \frac{d \xi}{ds} = -\partial_{x} p$$
has a stationary point and it is natural to consider the differential
of the Hamilton vector field. Thus we are led to define the
fundamental matrix
$$
F_{p} (\hat{x}, \hat{\xi}) = \left(\begin{matrix} p_{\xi, x}(\hat{x}, \hat{\xi}) & &  p_{\xi, \xi}(\hat{x}, \hat{\xi})\\
-p_{x, x}(\hat{x}, \hat{\xi}) & &  - p_{x,
  \xi}(\hat{x},\hat{\xi})\end{matrix}\right).
$$
We recall two important properties of $F_p$ (see \cite{IP}, \cite{H1}):
\begin{itemize}
\item[1. ]{} 
For every  point $z \in\Sigma_1(p)$ the Hessian $Q_p(X, Y),\: X, Y \in T_z(T^*(G))$ at $z$ of $\frac{p}{2}$ is well defined. Then
$ Q_p(X, Y) = \sigma(X, F_p(z) Y),$ $\sigma$ being the symplectic form on $T^*(G).$
Thus after a canonical transformation the fundamental matrix is
transformed into a similar one and its eigenvalues are invariant under
canonical transformations. H\"ormander \cite{H1} called $F_p(z)$ the
Hamilton map of $Q_p$.
\item[2. ]{} 
If $P$ is hyperbolic in $G$ and $(\hat{x}, \hat{\xi})$ is a
critical point of $p_m(x, \xi)$, then $F_{p_m}(\hat{x}, \hat{\xi})$
has at most two non-vanishing real simple eigenvalues $\mu$ and $-\mu$
and all other eigenvalues $\mu_j$ are purely imaginary, that is
${\rm Re}\: \mu_j = 0.$
\end{itemize}
 
The existence of non-vanishing real eigenvalues of $F_{p_m}(\hh,
\hhx)$ is a {\it necessary condition} for strong hyperbolicity. More
precisely, let $p_{m-1}(x, \xi) = \sum_{|\alpha| = m-1} c_{\alpha}(x)
\xi^{\alpha}$ and let 
$$p_{m-1}'(x, \xi) = p_{m-1}(x, \xi) + \frac{i}{2} \sum_{j=0}^n
\frac{\partial^2 p_{m}}{\partial x_j \partial\xi_j} (x, \xi)$$ 
 be the subprincipal symbol of $P$ which is invariantly defined for
 $(x, \xi) \in \Sigma_1(p_m).$ Then we have the following
\begin{thm} [Theorem 3 and Corollary 3 in \cite{IP}] 
\label{thm:ip}
If $P$ is strongly hyperbolic in $G$, then at
  every  point $(\hh, \hhx) \in \Sigma_1(p_m)$ the fundamental matrix
  $F_{p_m}(\hat{x}, \hat{\xi})$ has two non-vanishing real
  eigenvalues. Moreover, for $(x, \xi') \in \overset{\circ}G \times
  (\R^n \setminus \{0\})$ the multiplicities of the roots of $(1)$ are
  not greater than two, and for $(x, \xi') \in \{x_0 = 0\} \times \R^n
  \setminus \{0\}$ or for $(x, \xi') \in \{x_0 = T\} \times \R^n
  \setminus \{0\}$ these multiplicities are not greater than three.
If $F_{p_m}(\hh, \hhx)$ has only purely imaginary eigenvalues, the
condition $\im\: p_{m-1}'(\hh, \hhx) = 0$  is necessary for
(WPC).
\end{thm}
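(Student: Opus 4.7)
The strategy is classical Ivrii--Petkov: argue by contradiction in each of the three assertions by constructing, when the conclusion fails, a family of approximate solutions $u_\lambda$ of $(P_m + Q_{m-1})u_\lambda = r_\lambda$ with $r_\lambda = O(\lambda^{-\infty})$ in every Sobolev norm while $\|u_\lambda\|_{H^s}$ grows faster than any polynomial in $\lambda$, for some lower order perturbation $Q_{m-1}$ and some initial data $u_\lambda|_{t=s}$ that is concentrated near $\hat x$. By the closed graph theorem any (WPC) for $L = P_m + Q_{m-1}$ would yield an a priori bound of finite loss, contradicting such a construction. Thus the task reduces, in each case, to a geometric optics / Gaussian beam construction tailored to the geometry of $\Sigma_1(p_m)$ at $(\hat x,\hat \xi)$.

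For the multiplicity bounds, after a canonical transformation one microlocalizes near $\hat z$ and factors $p_m \sim e(x,\xi)\,(\tau - \lambda(x,\xi'))^r$ modulo lower order. A Lax--Mizohata WKB ansatz $u_\lambda = e^{i\lambda \varphi}\sum_k a_k\lambda^{-k}$ with $\varphi$ real and $\partial_t \varphi = \lambda(x,\partial_{x'}\varphi)$ produces a hierarchy of transport equations; choosing $Q_{m-1}$ to violate the natural Levi conditions for a multiplicity-$r$ root gives super-polynomial growth of $a_0$ as soon as $r \geq 3$ in the interior. On the faces $\{x_0=0\}$ and $\{x_0=T\}$ the support condition $\operatorname{supp}u\subset \overline G$ forces one extra order of vanishing in $x_0$, which is exactly what pushes the admissible multiplicity from $2$ up to $3$.

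For the existence of a pair of non-vanishing real eigenvalues of $F_{p_m}(\hat x,\hat \xi)$, assume to the contrary that all eigenvalues are purely imaginary. One brings the Hessian quadratic form $Q_{p_m}$ to H\"ormander's symplectic normal form as a sum of harmonic oscillators, after which the Hamilton flow is elliptic on a neighborhood of $\hat z$. I construct a Gaussian beam
\[
u_\lambda(x) = \chi(x)\,e^{i\lambda\bigl(\langle x-\hat x,\hat\xi\rangle + \tfrac12 \langle M(x_0)(x'-\hat x'),x'-\hat x'\rangle\bigr)} \sum_{k\ge 0}\lambda^{-k/2} a_k(x),
\]
where the symmetric complex matrix $M(x_0)$ with $\operatorname{Im}M(x_0)>0$ satisfies the matrix Riccati equation read off from the Hessian of $p_m$ at $(\hat x,\hat\xi)$. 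Ellipticity of $F_{p_m}$ is precisely what guarantees global solvability of the Riccati equation with $\operatorname{Im}M$ staying positive definite on a small time interval. The first transport equation for $a_0$ along the stationary trajectory involves the scalar quantity $p'_{m-1}(\hat x,\hat\xi) + \tfrac{i}{2}\,\operatorname{tr}^+ F_{p_m}(\hat x,\hat\xi)$ (Melin's invariant). Adding a zero-order term $Q_{m-1}=c$ tuned to reverse the sign of the damping produces solutions with $|a_0|\sim e^{c\lambda x_0}$, contradicting (WPC) and giving both the real-pair condition and, in the imaginary-eigenvalue case, the necessity of $\operatorname{Im}p'_{m-1}(\hat x,\hat\xi)=0$.

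The main obstacle is the uniform justification of the Gaussian beam in the purely imaginary-eigenvalue regime: one must solve the matrix Riccati equation for $M(x_0)$ while keeping $\operatorname{Im}M(x_0)$ uniformly positive definite, then solve all higher transport equations and bound the WKB tail in $\lambda$ so that $(P_m+Q_{m-1})u_\lambda = O(\lambda^{-\infty})$, and finally reconcile the beam's support with the one-sided support condition $\operatorname{supp}u\subset\overline G$ near $x_0=0$ and $x_0=T$. That matching is also what produces the asymmetric multiplicity thresholds (two in the interior, three on each temporal face) and explains why the last assertion concerning $\operatorname{Im}p'_{m-1}$ is only needed in the elliptic-flow case where no real pair of eigenvalues is available to dissipate the construction.
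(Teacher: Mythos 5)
This statement is quoted from Ivrii--Petkov \cite{IP} (``Theorem~3 and Corollary~3'') and is \emph{not proved} in this paper, so there is no internal proof to compare against; the only fair comparison is with the original \cite{IP} argument. Your overall strategy for the first and last assertions --- constructing asymptotic solutions that concentrate on $\Sigma_1(p_m)$, using the closed-graph theorem to turn (WPC) into an a~priori bound, bringing the Hessian to symplectic normal form, solving a matrix Riccati equation with $\Im M(x_0)>0$, and reading off the instability from the subprincipal invariant $p'_{m-1}+\frac{i}{2}\operatorname{tr}^{+}F_{p_m}$ --- is essentially the Ivrii--Petkov route to the necessity of the real eigenvalue pair and of $\Im p'_{m-1}(\hat x,\hat\xi)=0$ in the purely imaginary case. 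That part of your sketch is sound in spirit, and you are right that the delicate point is keeping $\Im M$ positive definite and matching the beam to the one-sided support condition.

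Where your proposal goes wrong is the multiplicity bound. You treat it as a third, independent instability construction (``Lax--Mizohata WKB ansatz \ldots transport equations \ldots choosing $Q_{m-1}$ to violate the natural Levi conditions''), and you explain the thresholds $2$ (interior) versus $3$ (temporal faces) by ``the support condition $\operatorname{supp}u\subset\overline G$ forces one extra order of vanishing in $x_0$.'' That is not the mechanism, and the citation already signals it: the bound is \emph{Corollary}~3, i.e.\ a purely algebraic consequence of the eigenvalue statement combined with hyperbolicity of the symbol, not a separate asymptotic-solution argument. Concretely, if $(\hat x,\hat\xi)\in\Sigma_1(p_m)$ has root multiplicity $\geq 3$ then $\partial_\tau^2 p_m(\hat z)=0$; expanding $p_m$ near $\hat z$ and imposing that the perturbed polynomial in $\tau$ keeps all roots real for every small perturbation of $(x,\xi')$ (and, at an \emph{interior} point, for both signs of $\delta t$) forces the entire $\tau$-row and $\tau$-column of the Hessian to vanish, and then a discriminant inequality forces the remaining block of the Hessian to vanish as well, so $F_{p_m}(\hat z)=0$ --- contradicting the existence of a non-vanishing real eigenvalue. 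On the temporal faces $\{x_0=0\}$ or $\{x_0=T\}$ hyperbolicity is required only one-sided in $t$, which spares the cross term $\partial_\tau\partial_t p_m$ and leaves room for multiplicity $3$; multiplicity $4$ again forces full degeneracy of the Hessian. So the asymmetry is a consequence of one-sided hyperbolicity of the \emph{symbol} on the temporal boundary, not of the support condition on solutions, and no WKB/Levi-condition construction is needed or used for this part.
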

If $F_{p_m}(\hh, \hhx)$ has only purely imaginary eigenvalues, for
(WPC) we have a second necessary condition 
$$|{\rm Re}\: p_{m-1}'(\hh, \hhx)| \leq \frac{1}{4} \sum_{j=0}^{2n + 2}|\mu_j|,$$
$\mu_j$ being the eigenvalues of $F_{p_m}(\hh, \hhx)$ repeated
according to their multiplicities. This condition has been proved in
\cite{IP} in some special cases concerning the structure of
$F_{p_m}(\hh, \hhx)$ and without any restriction by H\"ormander
\cite{H1}. 
\begin{defn} A hyperbolic operator with principal symbol $p_m(x, \xi)$
  is called effectively hyperbolic if at every point $(\hh, \hhx)
  \in \Sigma_1(p_m)$, the fundamental matrix $F_{p_m}(\hh, \hhx)$ has two
  non-vanishing real eigenvalues. 
\end{defn} 
V. Ivrii introduced the following

\medskip

{\bf Conjecture.}
\label{conj:ivrii}
{\it A hyperbolic operator is strongly hyperbolic if
and only if it is effectively hyperbolic}.

\medskip

For operators with at most double characteristics  the sufficient part
of the above conjecture has been established for some special class of effectively operators by Oleinik \cite{O}, H\"ormander \cite{H1}, Ivrii \cite{I}, Melrose \cite{M} and in the general case by N. Iwasaki \cite{Iw1}, \cite{Iw2} and T. Nishitani
\cite{N1}, \cite{N2}, \cite{N3}. In particular, in the works of Nishitani many 
 properties of effectively hyperbolic operators with double
 characteristics have been established.  An important phenomenon for
 this class of operators is that we have always a loss of regularity
 which depends on the ratio of the subprincipal symbol and the
 non-vanishing real eigenvalue of $F_{p_m}$ at double characteristic
 points (see Theorem 3 in \cite{IP} for a necessary condition, and \cite{I}, \cite{N2} for sufficient conditions).

However according to Theorem \ref{thm:ip}, there are also classes
of effectively hyperbolic operators with characteristics of
multiplicity 3 which should be strongly hyperbolic. 
The analysis of operators with double characteristics is fairly
complete and over the last 30 years a lot of papers
treating both the effectively hyperbolic and the non-effectively
hyperbolic operators has appeared. The case when we have triple characteristics is more complicated
for several reasons. So far the  only result concerning effectively hyperbolic operators with triple characteristics appears to be that of Ivrii \cite{I} for operators such that  in a conic neighborhood of every point $(0, x_0, \xi_0)$ with triple characteristics $p_3$ admits a factorization
\begin{equation} \label{eq:1.4}
p_3(t, x, \tau, \xi) = ((\tau- \beta(x, \xi))^2- D (t, x, \xi))(\tau- \gamma(t, x, \xi))
\end{equation} 
as a product of two principal type symbols with smooth real-valued
symbols $\beta, \gamma, D$ and $D \geq 0$  for $t \geq 0.$ For such a class of operators
Ivrii established in \cite{I} a  strong hyperbolicity result. It is clear that,
for a factorization of the form (\ref{eq:1.4}) to exist, it is
necessary that the equation $p_3 = 0$ has a
$C^{\infty}$ real root $\tau = \gamma(t, x, \xi)$ defined in a small
conic neighborhood of the point $(0, x_0, \xi_0)$. 
This is possible only if some terms of $p_3$ have a very
special behavior. For example, consider the symbol 
\begin{equation} \label{eq:1.5}
p_3 = \tau^3 - (t a_2(t, x, \xi) + \alpha(x, \xi))\tau + t^2 b_3(t, x, \xi)
\end{equation}
with $a_2(t, x, \xi) \geq c |\xi|^2, \: c > 0, \:\alpha(x, \xi) \geq 0$ and
$\alpha(0, \xi_0) = 0, 4(ta_2 + \alpha)^3 \geq 27 t^4 b_2^3$ for $t
\geq 0$, where $b_3(t, x, \xi)$ is a symbol of third order.
Then if $b_3(0, 0, \xi_0) \neq 0$, the factorization (\ref{eq:1.4}) for fixed $\xi = \xi_0$ is
possible only if $\alpha(x, \xi_0) \equiv 0$ for all $x$ in a neighborhood of $0.$
We prove this result in the Appendix. 

The purpose of the present paper is to show that for a class of third
order weakly hyperbolic operators  whose principal symbol $p_3$, after a
reduction, has the form (\ref{eq:1.5}) we have
strong hyperbolicity if the Hamilton map $F_{p_3}$ has two real non-vanishing 
eigenvalues in its spectrum on the triple characteristic points. The last condition is
satisfied if the symbol $a_2(t, x, \xi)$ is elliptic. According to
Theorem 1.1, a strongly  hyperbolic operator may have triple characteristics only for $t = 0$
or $t = T$ and in this paper we deal with the case when this may  happen
for some points on $t = 0.$

More precisely, we study operators having the form 
\begin{eqnarray} 
\label{1.5}
P & = & D_t^3 +   q_1(t, x, D_x)D_t^2 +q_2(t, x, D_x)D_t + q_3(t, x, D_x) \nonumber \\
&  &
+  r_2(t,x, D_x) + r_1(t, x, D_x) D_t + r_0(t, x)D_t^2 + m_1(t, x, D_x) + m_0(t, x)D_t + c_0(t, x).
\end{eqnarray}
Here $q_j(t, x, D_x),\: j = 1,2,3,$ are differential operators with
$C^{\infty}$ coefficients and real-valued symbols $q_j(t, x, \xi)$
which are homogeneous polynomials of order $j$ in $\xi$, $r_j(t, x,
D_x),\: j = 1,2,$ are differential operators with $C^{\infty}$
coefficients and symbols $r_j(t, x, \xi)$ homogeneous of order $j$
with respect to $\xi$, $r_0(t, x), m_0(t, x), c_0(t, x)$ are $C^{\infty}$ functions and $m_1(t,
x, D_x)$ is a first order differential operator with $C^{\infty}$
coefficients. Let $p_3(t, x, \tau, \xi)$ be the principal symbol of
$P$ and let $G  = \{(t, x):\: 0 \leq t \leq T,\: x \in  U\},$ where $U
\Subset \R^n$ is an open set in $\R^n.$  
Consider the symbols 
$$\Delta_1  = 27 q_3 - 9q_1 q_2 + 2 q_1^3,\: \Delta_0 = q_1^2 - 3 q_2, \: \Delta =  -\fr{1}{27} (\Delta_1^2 - 4 \Delta_0^3).$$
The symbol $\Delta$ is the {\it discriminant} of the equation $p_3 = 0$ with respect to $\tau$ and we have
three real roots for $t \geq 0$ if and only if $\Delta \geq 0.$ The
symbol $\Delta_0$ is the {\it discriminant} of the equation
$\pa_{\tau} p_3 = 3 \tau^2 + 2 q_1 \tau + q_2 = 0$ with respect to $\tau$ and if we have a
triple root at $\rho = (0, x_0, \xi)$, we get $\Delta_0(\rho) = 0.$
Thus if the equation $p_3 = \tau^3 + q_1 \tau^2 + q_2 \tau + q_3 = 0$
has a triple real root at $\rho = (0, x_0, \xi)$, we must 
have $\Delta_0(\rho) = 0,\: \Delta (\rho) = 0.$ This implies
$\Delta_1(\rho) = 0$ and the triple root is $\tau = -
\fr{q_1(\rho)}{3}.$  

Since the  polynomial $p_3$ is hyperbolic with respect to $\tau$ for
$t \geq 0$, we deduce that at a point $\rho$ with triple
characteristics we have 
$$(d_{t, x} p_3)\Bigl(0, x_0, -\fr{q_1(\rho)}{3}, \xi\Bigr) = 0$$
 (see Lemma 8.1 in \cite{IP}). The last condition can be written as follows
$$(d_{t,x} q_1) (\rho) \fr{q_1^2(\rho)}{9} - (d_{t,x} q_2)(\rho) \fr{q_1(\rho)}{3} + (d_{t,x}q_3)(\rho) = 0.$$
Taking the differential of ${\Delta_1}$ at $\rho$, and using $\Delta_0(\rho) = 0$, we deduce easily that
$(d_{t, x}\Delta_1)(\rho) = 0.$

In this paper we make the following assumptions: 

\bigskip

$(H_0)$ The roots of the equation $p_3(t, x, \tau, \xi) = 0$ with respect to $\tau$ are real for all $(t, x) \in \bar{G}, \xi \in \R^n.$\\

$(H_1)$ If the equation $p_3(0, x, \tau, \xi) = 0$ with respect to $\tau$ has a triple root $\tau= \lambda(0, x_0, \xi_0)$ for $t = 0,\: x_0 \in U,\: \xi_0 \in \R^n \setminus \{0\}$, then we have a triple root $\tau = \lambda(0, x_0, \xi)$ for $(0, x_0, \xi),\forall \xi \in \R^n \setminus \{0\}$ and the Hamiltonian map $F_{p_3}(0, x_0, \lambda(0, x_0, \xi), \xi)$ of $p_3$ has non-zero real eigenvalues $\pm \mu(x_0, \xi)$ for $\xi \in \R^n \setminus \{0\}.$\\

$(H_2)$ If $(H_1)$ holds for $(0, x_0, \xi)$, then there exists an open neighborhood $U_{x_0}$ of $x_0$ such that  $\Delta_1(0, x, \xi) = (\pa_t \Delta_1) (0, x, \xi) = 0$ for $x \in U_{x_0}$ and $\xi \in \R^n \setminus \{0\}.$\\

Our main result is the following
\begin{thm} 
Let $x_0 \in U$ be a point for which the hypothesis $(H_0)- (H_2)$ are satisfied. Then there exists a neighbourhood $V_{x_0} \subset U_{x_0}$ of $x_0$ such that for  $T > 0$ sufficiently small,  the operator $P$ is strongly hyperbolic in $\{(t, x);\: 0 \leq t \leq T,\: x \in V_{x_0}\}.$ 
\end{thm}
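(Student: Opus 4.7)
\dem The plan is to reduce the principal symbol to the normal form \eqref{eq:1.5} and then prove a weighted energy inequality with a controlled loss of regularity; strong hyperbolicity then follows by the usual duality/existence argument from such an inequality.

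\emph{Step 1: reduction of the principal part.} At a triple point $\rho = (0,x_0,\xi)$ one has $\Delta_0(\rho) = \Delta_1(\rho) = 0$ and the triple root is $\tau = -q_1(\rho)/3$. By hypothesis $(H_1)$ this holds for every $\xi \in \R^n\setminus\{0\}$, so $\Delta_0(0,x_0,\cdot)$ vanishes identically in $\xi$, and $(H_2)$ adds the vanishing of $\Delta_1$ and $\pa_t\Delta_1$ on $\{t=0\}\times U_{x_0}$. Making the shift $\tau \mapsto \tau - q_1/3$ (i.e.\ conjugating $P$ by an elliptic first order operator) we may assume $q_1\equiv 0$. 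Combined with the vanishing of $\Delta_1, \pa_t\Delta_1$ and Lemma 8.1 of \cite{IP}, a Taylor expansion of $q_2$ and $q_3$ in $t$ yields
$$
p_3 = \tau^3 - \bigl(t\, a_2(t,x,\xi) + \alpha(x,\xi)\bigr)\tau + t^2 b_3(t,x,\xi),
$$
with $a_2, \alpha, b_3$ real and homogeneous in $\xi$ of orders $2,2,3$ respectively, $\alpha(x_0,\xi)\equiv 0$, and (from hyperbolicity plus the non-vanishing of the real eigenvalues $\pm\mu$ of $F_{p_3}$ on the triple set) $a_2(0,x_0,\xi) \geq c|\xi|^2$. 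Shrinking to a neighborhood $V_{x_0}$ of $x_0$ and choosing $T$ small, $a_2$ remains elliptic and the discriminant inequality $4(ta_2+\alpha)^3\geq 27\, t^4 b_3^2$ holds.

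\emph{Step 2: microlocal weight and energy.} Following the strategy used for double effectively hyperbolic operators, I introduce a weight $f(t,\xi) = \bigl(t^2 |\xi|^2 + \alpha(x,\xi) + \ep |\xi|^{4/3}\bigr)^{1/2}$ (with the regularizing parameter $\ep>0$) which encodes the simultaneous degeneration in $t$ and in $\xi$ near the triple set, and define symbol classes $\aee$, $\oa$, $\3a$ adapted to this weight. The macros $\fN$, $\fNt$ of the preamble suggest the energy density will be of the form
$$
E(t) = \bigl\la \fN\, D_t^2 u, D_t^2 u\bigr\ra + \bigl\la \aee\, \fN D_t u, D_t u\bigr\ra + \bigl\la \3a\, \fN u, u\bigr\ra + \text{cross terms in } t D_t D_x u,
$$
multiplied by $e^{-2\lambda t}$ with $\lambda$ large. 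The cross terms are dictated by the symbol structure in \eqref{eq:1.5}: they are chosen so that differentiating $E$ in time and substituting $Pu = f$ produces, at the level of principal symbols, a non-negative quadratic form in $(D_t u, tD_x u, u)$, times the factor $\lambda$ coming from the exponential weight.

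\emph{Step 3: energy estimate with loss.} Computing $\frac{d}{dt}(e^{-2\lambda t} E(t))$ and using the symbolic calculus with sharp G\aa rding to handle the remainders, I expect an inequality of the form
$$
\lambda\!\int_0^T\! e^{-2\lambda t} E(t)\, dt \;\leq\; C\!\int_0^T\! e^{-2\lambda t}\|\fNt Pu\|^2\, dt + C\, E(0),
$$
for $u$ with $\supp u \subset \{t\geq 0\}$, uniformly in $\ep$ up to a fixed loss of $N$ derivatives coming from the weights $f^{-N-1/2}$. The lower order terms $r_j, m_j, c_0$ are absorbed into the right-hand side by choosing $\lambda$ sufficiently large after Cauchy--Schwarz, precisely because we permit this loss. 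Letting $\ep\to 0$ and using the hyperbolicity hypothesis $(H_0)$ to keep the energy non-degenerate away from the triple set gives the energy estimate valid for $0\leq t\leq T$ with $T$ small.

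\emph{Step 4: conclusion.} The $L^2$-type energy estimate on $P$ together with the analogous estimate on the formal adjoint $P^*$ (which has the same principal structure) yields, by the standard Hahn--Banach/duality argument, both existence of a solution to $Pu=f$ with $\supp u\subset\{t\geq 0\}$ in a neighborhood of $x_0$, and finite-speed-of-propagation uniqueness. Since the estimate is stable under any lower order perturbation $Q_2$ (its contribution sits inside the terms $r_j, m_j, c_0$ already handled in Step 3), this proves strong hyperbolicity on $\{0\leq t\leq T,\, x\in V_{x_0}\}$.

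The main obstacle is Step 3: constructing an energy whose time derivative is genuinely positive modulo terms absorbable by the $e^{-2\lambda t}$ factor. The difficulty is that the triple characteristic at $t=0$ creates a symbol that degenerates in two independent scales ($t$ and $\xi^{-1/3}$), and the weight $f$ has to be fine-tuned so that the contributions of $a_2$, $\alpha$, and $t^2 b_3$ balance against the lower order symbols $r_j$ up to the prescribed derivative loss; this is where the ellipticity of $a_2$ (forced by the two real eigenvalues of $F_{p_3}$) enters decisively.
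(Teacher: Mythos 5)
Your Step 1 reduction to $p_3 = \tau^3 - (ta_2 + \alpha)\tau + t^2 b_3$ matches the paper's Section 2 and is fine. The problems begin in Steps 2--3, where the argument is only sketched and the choices you make diverge from the paper's in ways that are not merely cosmetic.

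First, your proposed weight $f(t,\xi) = \bigl(t^2|\xi|^2 + \alpha(x,\xi) + \ep|\xi|^{4/3}\bigr)^{1/2}$ depends on $x$ through $\alpha(x,\xi)$, so $\op(f^{-N})$ would be a genuine pseudodifferential operator, not a Fourier multiplier. The paper's choice $f(t,\xi) = \tfrac{t}{3} + \la\xi\ra^{-2/3}$ is deliberately $\xi$-only: this is what makes the commutator $[a_2^{\ep}, f^{-N}]$ tractable and, after the dilation $t = \ep^{2/3}s$, $x = \ep y$, puts its symbol in $S(f^{-N}\ep N\la\xi\ra, g^{\ep})$, so that $[a_2^{\ep}, f^{-N}]f^{N}$ is a first-order operator with norm independent of $N$. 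An $x$-dependent $f$ destroys this mechanism and you would have to redo essentially all of the commutator calculus with no obvious gain. Second, you use $\ep$ as a regularizer in $f$ that you "let go to $0$", but the paper's $\ep$ is a coordinate scaling fixed at $\ep = \mathcal{O}(1/N)$ and never sent to $0$; its purpose is precisely to compensate the factor $N$ in the commutator bound, not to regularize the weight. Sending $\ep\to 0$ does not produce the uniformity you are asserting.

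Third, and most substantively, your Step 3 does not explain what determines the loss of regularity, and this is where the quantity $\Pi = \tfrac{2}{3} + \sup|p_2'(\mu)^{-1}|$ enters. The reason a fixed $N$ suffices for \emph{all} lower-order perturbations $Q_2$ is that the second-order term $b_2(0,x,\xi)$ can be written as $\bigl[\tfrac{i}{2} + p_2'/a_2\bigr]a_2(0,x,\xi)$ and the operator $\bigl[\tfrac{i}{2} + p_2'/a_2\bigr]$ has $L^2$-norm $\leq\Pi$; the positive terms with coefficient $N$ in the energy must dominate $\Pi$, which forces $N\geq\tfrac{13}{2}\Pi + N_0$ (Section 7.1--7.2 of the paper). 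Without tying $N$ to $\Pi$ your claim in Step 4 that "the estimate is stable under any lower order perturbation" is unsupported: the perturbation contributes a second-order term at $t=0$ whose size relative to $a_2$ is exactly what $\Pi$ measures, and if $N$ is not chosen large enough relative to $\Pi$ the energy argument fails. Finally, the paper's multiplier carries the singular factor $\psi(t) = e^{-2\lambda t}/t$, which is needed to generate the large coefficient $\tfrac{2N}{3}$ in front of $\mathscr{E}_{N+1/2}$ and requires $u$ and its first two $t$-derivatives to vanish at $t = 0$; you only have the factor $e^{-2\lambda t}$, which would not produce the requisite positivity near $t=0$. You correctly identify Step 3 as "the main obstacle," but that obstacle is precisely the content of Sections 5--7 of the paper, and as stated your sketch does not overcome it.
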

 The technique of the energy estimates can be modified to cover the case when $(H_1)$ holds only in a conic neighbourhood of a point $\rho_0 = (0, x_0, \xi_0)$. However, to obtain a (WPC) in a neighbourhood of $x_0$, it is necessary to have more sophisticated tools to deduce (WPC) from the microlocal a priori estimates with loss of regularity. As we mentioned above, the assumption $(H_2)$ is valid at all triple characteristic points.
Notice that if $T$ is not small and if for $0 < \delta \leq t < T$ with sufficiently small $\delta$ the operator
$P$ is effectively hyperbolic with double characteristics at some points, we can
combine our result with those of \cite{Iw1}, \cite{Iw2}, \cite{N1},
\cite{N2} to obtain a strongly hyperbolic operator in $\{0 \leq t <
T,\: x \in U\}.$ Our arguments work also if 
we assume that $q_1, q_2, q_3$ are pseudodifferential operators with
real-valued symbols.

\bigskip

 To prove Theorem 1.2, we establish a theorem of existence and
 uniqueness for solutions of the Cauchy problem (see Theorem 8.3). To
 this purpose we obtain an a priori estimate with a loss of regularity
 of order $2N/3-2$ for the operator $P$ near a triple characteristic point.
 We choose $N = \fr{13}{2} \Pi + N_0,$ with $N_0$ an integer and
\begin{equation} 
\label{eq:l.6}
\Pi = \frac{2}{3}  + \sup_{x\in \bar{U}_{x_0}, |\xi| = 1}  | p_2'(0, x, \lambda(0, x, \xi), \xi) (\mu( x,\xi))^{-1}|,
\end{equation}
where $p_2'(t, x, \tau, \xi)$ is the subprincipal symbol of $P$, $\lambda(0, x, \xi)$ denotes the triple root of $p_3 = 0$ and $\mu(x, \xi)$ is the non-vanishing eigenvalue of $F_{p_3}(0, x, \lambda(0, x, \xi), \xi)$. 
Moreover, the integer $N_0$ depends only on $p_3(0, x, \lambda(0, x, \xi), \xi),\: x \in U_{x_0},$ but we are not going to precise
the optimal value of $N_0$. It seems that with a more complicated
analysis of the contribution of the subprincipal symbol $p_2'$
it should be possible to obtain a loss of regularity $2N/3 - 2$ with
$N = \frac{3}{2}\Pi + N_0$ and this is an interesting open problem.

\bigskip
 
We may compare the number (\ref{eq:l.6}) with the loss of regularity for second
order strongly hyperbolic operators $L$ with principal symbol $l_2(t,
x, \tau, \xi)$ given by 
$$2 + \Bigl[ \frac{1}{2} + \sup_{\rho  \in \Sigma_1} |l_1'(\rho) (\mu(\rho)^{-1}|\Bigr],$$
where $[z]$ is the integer part of $z$ (see  \cite{O}, \cite{I}, \cite{M} for a special class of operators with double characteristics and \cite{N2} for the general case). Here 
$$\Sigma_1 = \{\rho = (t, x, \tau, \xi)\in G \times \R^{n + 1}\setminus \{0\}:\:l_2(\rho) = 0, d l_2(\rho) = 0\}$$
is the double characteristic set of $L$, $l_1'(\rho)$ is the subprincipal symbol of $L$ and $\mu(\rho)$ is the non-vanishing eigenvalue of the Hamiltonian map $F_{l_2}(\rho)$ at  $\rho \in \Sigma_1$.  It is important to note that the loss of regularity $M$ for the solutions of the Cauchy problem for $P$ is {\bf bounded} from below by
\begin{equation} \label{eq:1.7}
\sup_{x \in \bar{U}, \: |\xi| = 1} | \im (p_2'(0, x, \lambda(0, x, \xi), \xi))(\mu(x,\xi))^{-1}| \leq 2 n(M + 3).
\end{equation}
This follows from the necessary condition (36) in Theorem 3 in \cite{IP}. Thus our result with $M = 2N/3 - 2$ is compatible with this lower bound.

\subsection{Comments on the proof of the main result}

The proof of Theorem 1.2 is long and technical. It is based on the energy estimates obtained in
Theorems 8.1 and 8.2 and, as we mentioned above, we cannot avoid the
loss of regularity which is related to the ratio of the
subprincipal symbol and the non-vanishing eigenvalue of the
Hamiltonian map. This is one of the main differences compared to the hyperbolic 
operators of principal type (see for example the analysis in Section
23.4 in \cite{h3}.)

\bigskip

First by a change of variables $(t, x)$ we reduce the analysis to the
case when the principal symbol  $p_3$ in the new variables, which we denote again by $(t, x)$,  has the form 
$$
p_3(t, x, \tau, \xi) = \tau^3  -( t a_2(t, x,
\xi) + \alpha (x, \xi))\tau + t^2b_3(t, x, \xi),
$$
where $a_2, \alpha$, are real-valued symbol homogeneous of order 2 with respect to $\xi$ and $a_2(t, x, \xi)$  $ \geq c
|\xi|^2,\:c > 0,\: \alpha(x, \xi) \geq 0$, while $b_3(t, x, \xi)$ is a real-valued symbol of ordre 3 in $\xi$ (see Section 2). Here we use essentially the condition $(H_2)$ to present the term with third order derivaties in $x$ as $t^2 b_3(t, x, D_x)$. If $(H_2)$ is not satisfied, we will have a term
$$a_3(t, x, \xi) = t^2 b_3(t, x, \xi) + t c_3(x, \xi) + \beta(x, \xi)$$
with third order symbols $b_3, c_3, \beta$. Of course, the hyperbolicity of $p_3$ implies that
$$ |a_3|^2 \leq \fr{4}{27} (t a_2 + \alpha)^3,\: t \geq 0,$$
but it is quite difficult to exploit this condition working with the energy ${\mathscr E}_N(u)$ and the time function $f = \fr{t}{3} + \la \xi \ra ^{-2/3}$ defined below.  A different  
choice of $f$ adapted to the structure of $p_3$ and some more complicated energy techniques seem to be necessary to cover the general case when $c_3 \neq 0$ and $\beta \neq 0.$
The reader may consult the work of Nishitani \cite{N2}, where the choice of $f$ is related to a microlocal model of the principal symbol with double characteristics. 

Next,  we introduce, in Section 4, the scaling $t =
\ep^{2/3}s, x = \ep y,\: \ep > 0,$ and we transform our operator into
the operator ${\mathcal P}$ with respect to $(s, y)$ (see Section 4 for
the notations).  Since we are 
interested in showing that the Cauchy problem is well posed for
sufficiently small $t$ and since $P$ is strictly hyperbolic for $t$
positive and small enough, we can investigate the operator ${\mathcal P}$. We denote below again by $(t, x)$ the new variables. The
symbols $a_2(t, x, \xi),\: \alpha(x, \xi)$ are transformed to symbols 
$$a_2^{\ep} = a_2(\ep^{2/3}t, \ep x, \xi),\: \alpha^{\ep} = \alpha(\ep x, \xi)$$
and this is important for the pseudodifferential calculus developed in
Sections 3-4. Eventually we choose $\ep = {\mathcal O} (N^{-1})$, where $N = \frac{13}{2}\Pi + N_0$
 is a large integer, so that  $0 < \epsilon_0 \leq \ep N \ll 1.$

\bigskip

The so called {\it time function} $f(t, \xi) = \frac{t}{3} + \la \xi \ra ^{-2/3}$
plays an important role in the calculus of pseudodifferential
operators with order function $m_N^t = f^{-N}(t, \xi)$ and metric 
$$g_{(x, \xi)}^{\ep} = \ep^2 |dx|^2 + \la \xi \ra^{-2} |d\xi|^2.$$
The reason of this choice is that the commutator $[ a_2^{\ep}(t, x,
D_x), f^{-N}(t, D_x)]$ has symbol in the space $S(f^{-N} \ep N \la \xi \ra,
g^{\ep})$ so $[a_2^{\ep}(t, x, D_x), f^{-N}(t, D_x)] f^{N}(t, D_x)$ becomes a first order operator whose norm is not
depending on $N$ and, in particular, on $\Pi.$ This is proved in
Proposition 5.2.

\medskip

The main idea is to multiply $f^{-2N}(t, D_x){\mathcal P}u$ by the {\it multiplier}
$$M u = \psi(t) \Bigl[D_t^2 - \theta (t a_2^{\ep}(t, x, D_x) u + \ae(x, D_x) u)\Bigr],$$
where we choose $\theta = 1/3$ and $\psi(t) = \fr{\el}{t},\: \lambda > 0.$ For the analysis of the problem with data $D_t^j u(T, x) = 0, \: j = 0, 1,2$, we use the function $\varphi(t) = t \el$.
We study the expression 
\begin{equation} \label{eq:1.7bis}
- 2 \im \la f^{-2N}(t, D_x) {\mathcal P}u, Mu \ra,
\end{equation}
$\la  ,  \ra$ denoting the scalar product in $L^2(\R^n)$
and $u \in C_0^{\infty} ([0, T] \times U)$ has traces $D_t^j u(t_0, x) = 0, \: j = 0,1,2, \: 0 \leq t_0 \leq t \leq  T.$ The last condition guarantees that $Mu$ is well defined for $t_0 = 0.$ The above
expression modulo lower order terms is a sum of
15 terms and we make a quite detailed analysis of all these terms in
Section 5. The purpose is to find, by integration by parts, ``positive''
terms with a big coefficient of order ${\mathcal O}(N)$ which will absorb in the
energy estimate the contributions with "indefinite" (possibly negative) sign.

In fact, we have many indefinite terms, while the positive ones come from the expression
$$\partial_t \mathscr{E}_N(u) + \fr{2N}{3} \mathscr{E}_{N+ 1/2}(u) + 2\lambda {\mathscr E}_N(u),$$
where
\begin{eqnarray*}
\mathscr{E}_N(u)  = \psi \Bigl[\| f^{-N} u''\|_0^{2}    
+\fr{2}{3} \re \la f^{-2N} (t \aee + \ep^{-2/3} \ae) u', u' \ra \nonumber \\
 +\fr{1}{3} \|f^{-N} (t \aee u + \ep^{-2/3} \ae ) u\|_0^2 
+ \fr{2}{3} \re \la f^{-2N} u'', (t \aee + \ep^{-2/3} \ae) u\ra \Bigr]  \nonumber \\                                                
+ \epsilon^{1/3} t \el 2 \im \la f^{-2N} b_{3}^{\epsilon} u,
u'\ra . 
\end{eqnarray*}

The quantity in the third line above, involving powers of $\ep$ and
$t$, is a lower order perturbation. We introduce the {\bf energy} of
order $k \in N$ by the expression

\def\a0{{\bf a}_2^{\ep}}
\begin{eqnarray} \label{eq:1.11}
E_{k}(u) = \psi\Bigl(\frac{1}{3} \| f^{-k} u'' \|_0^{2} +\frac{2}{3}  \re \la f^{-k}(t \a0 + \ep^{-2/3} \ae) u' ,
f^{-k}u' \ra \nonumber \\
+ \frac{1}{6} \| f^{-k}(t \a0 u + \ep^{-2/3} \ae) u \|_0^{2} + \frac{2}{3} \|f^{-k}( u'' +\frac{1}{2} (t \a0 + \ep^{-2/3} \ae)u)\|_0^2\Bigr),
\end{eqnarray}
where $\a0$ is the operator with symbol $a_2^{\ep}(0, x, \xi)$ and $\|.\|_k$ denotes the $H^k(\R^n)$ norms. 
Next we prove that modulo lower order terms we have $\mathscr{E}_k(u) \geq E_k(u).$
To gain control on the terms involving $\|f^{-k}u(t, .)\|_j$ norms with $j = 1, 2$ and $k = N, N + 1/2$, we would like to
exploit the terms with $H^2(\R^n)$ norms having large coefficients. However, we have positive terms only with
the norm $t\|f^{-N- 1/2} u\|_2^2$ and, as $t$ becomes close to 0, we cannot absorb the $H_2$ norm of $f^{-N- 1/2}u.$ This is the principal difficulty when we try to absorb the lower order terms created by $\im \la f^{-2N} b_2^{\ep} u, \aee u\ra$ and the second order operator $b_2$.
 A way around this is to use the key inequality 
$$
\la \xi \ra^4 f^{-k}\leq t \la \xi \ra^4 f^{-k-1} + \la \xi \ra ^2 f^{-k-3},
$$
established in Lemma 6.1. Thus, for example, we have an estimate
$$
\|f^{-N + 1/2} u\|_2^2 \leq t \|f^{-N}u \|_2^2 + \|f^{-N -1}
u\|_1^2.
$$
Applying the above equality twice, we get also norms of the form $\|f^{-N
  - 5/2} u\|_0$. Following this way, we have  
negative terms with coefficients $t$, but we have now
generated other negative terms involving norms with {\it weights}
$f^{-N - 3/2}$ and $f^{-N - 5/2}$ and these also have to be absorbed. 
However, the latter types of terms are not included in our {\bf energy}
expressions $E_{N+ 1/2}(u), E_N(u)$ given by (\ref{eq:1.11}).

To deal with them, we apply to
just a fraction of the positive terms in $\psi E_{N + 1/2}(u)$, having a large
coefficient proportional to $N$, the inequality
\begin{eqnarray} 
\label{eq:6.10b}
\psi \|\fNt u'' \|^2&  \geq & \pa_t \Bigl( \psi \|f^{N -1}  u'\|^2 \Bigr) - \psi'\|f^{-N -1} u'\|^2 
 + (2N/3 - 4/3) \psi \|f^{-N - 3/2} u'\|^2 \nonumber \\
&  &
+ \pa_t \Bigl(
 \psi \|f^{-N - 2} u \|^2\Bigr) - \psi'\|f^{-N - 2} u \|^2 + \frac{2N
   + 1}{3} \psi \|f^{N - 5/2} u\|^2.
\end{eqnarray}
A similar inequality for $\psi \|f^{-N - 1} u\|_1^2$
completes our technical toolkit. 

Finally, in Section 7 we show that we can
absorb all non positive terms. In Section 8 we get the energy estimates
with loss of $2N/3 - 2$ derivatives in $x$ which imply, by a standard
argument, the well posedeness of the Cauchy problem.\\

\section{Hyperbolic operators with triple characteristics}
%
%
\setcounter{equation}{0}
\setcounter{thm}{0}
\setcounter{prop}{0}  
\setcounter{lem}{0}
\setcounter{cor}{0} 
\setcounter{defn}{0}
\setcounter{rem}{0}
%

 In this section we use the notations of Section 1. First we will change the variables so that in the new variables  the
principal symbol has not a term involving $\tau^2.$  Let us write
$$p_3 = \Bigl(\tau + \frac{1}{3} q_1\Bigr)^3 + (q_2 - \frac{q_1^2}{3})\Bigl(\tau + \frac{1}{3}q_1\Bigr) + q_3 - \frac{q_1 q_2}{3} + \frac{2 q_1^3}{27}.$$
Then the term without coefficient $(\tau + \fr{1}{3} q_1)$ is just $ \fr{\Delta_1}{27}.$
With a change of variables 
$$s = t, \: y_j = f_j(t, x),\: j=1,...,n,$$
we may transform for small $t \geq 0$ the symbol $\tau + \frac{1}{3} q_1$ into $\sigma$. Let $\frac{1}{3}q_1 = \sum_{j = 1}^n \alpha_j(t, x) \xi_j.$
It is sufficient to solve the
first order hyperbolic equations
$$ \frac{\partial f_j}{\partial t} +\sum_{k=1}^n \alpha_k(t, x) \frac{\partial f_j}{\partial x_k} = 0, \: j=1,...,n$$
with initial data $f_j(0, x) = x_j$. The Jacobian $ J = \det \frac{D(s, y)}{D(t, x)}$ is different from 0 for small $t \geq 0$ and $x \in \bar{U}$ and we can solve the system for $f_j, j = 1,...,n,$ for small $|t| \leq \eta.$ 

Now by using the same notations $(t, x)$ for the new variables, consider the operator with principal symbol
\begin{equation} \label{eq:2.1}
{\bf p}_3 =  \tau^3 - r_2(t, x, \xi) \tau + r_3(t, x, \xi),
\end{equation}
where $r_j$ are real-valued symbols homogeneous with respect to $\xi$ of order $j = 2, 3.$ Clearly, the symbol $r_3$ is just $\fr{\Delta_1}{27}.$ If we have a multiple real root $\tau = \lambda(0, x_0, \xi)$ of ${\bf p}_3 = 0$ for $(0, x_0, \xi)$,
then $\lambda(0, x_0, \xi)$ is a root of $3\tau^2 - r_2(t, x, \xi) = 0$. Thus $r_2(0, x_0, \xi) \geq 0$. Moreover, the root $\lambda$ is triple if and only if $\lambda(0, x_0, \xi) = r_2(0, x_0, \xi) = 0.$ If $\tau = 0$ is a triple root at $(0, x_0, \xi)$ and if the fundamental matrix of ${\bf p}_3$ at $(0, x_0,0, \xi)$ has non zero real eigenvalues, then $\partial_t r_2(0, x_0, \xi) > 0$ (see Lemma 8.1  in \cite{IP}), On the other hand, the hyperbolicity
of the operator implies that the discriminant $\Delta$ satisfies the inequality
$$\Delta = 4 r_2^3 - 27 r_3^2 \geq 0.$$
If at a point $(0, x_0, \xi_0)$ we have $r_2(0, x_0, \xi_0) < 0,$ then $\Delta(0, x_0, \xi_0) < 0$ and we will have two complex conjugated roots. Thus in a neighborhood of $(0, x_0)$ we have
$$ r_2(t, x , \xi) = \alpha( x, \xi) + t a_2(t, x, \xi)$$
with 
$$ a_2(0, x, \xi) \geq c|\xi|^2, \: c > 0,\: \alpha(x, \xi) \geq 0.$$

In the new variables, denoted by $(t, x)$, the operator $P$ is transformed into
\begin{eqnarray}
\label{eq:2.2}
{\mathcal P} & = & D_t^3  -(t a_2(t, x, D_x) + \alpha(x, D_x))D_t +  a_3(t, x, D_x)
+ b_2(t, x, D_x)\nonumber 
\\
&  &
 +  b_1( t, x, D_x)D_t +  b_0(t, x)D_t^2 + c_1(t, x, D_x) + c_0(t, x)D_t + d_0(t, x).
\end{eqnarray}
Here $ a_2, a_3$ are homogeneous polynomials with respect
to $\xi$ respectively of order 2, 3,  $b_j(t,x, \xi)$ are
homogeneous polynomials with respect to $\xi$ of order $j$, while
$b_0, c_0, d_0$ are smooth functions and $c_1(t, x, \xi)$ is a first
order differential operator with respect to the variable $x$.  Without loss of generality we may assume that
$a_2(t, x, D_x)$ and $\alpha(x, D_x)$ are self-adjoint positive operators. This will change the operators $b_1(t, x, D_x)D_t,\:c_0(t, x)D_t$ which is not important for our argument.
According to the condition $(H_2)$, in a neighborhood $U_{x_0}$ of $x_0$ $a_3$ has the form
$$a_3 (t, x, \xi) = t^2 b_3(t, x, D_x)$$
with a symbol $b_3(t, x, \xi)$ homogeneous of order 3 in $\xi$. We may also assume that $b_3(t, x, D_x)$ is a self-adjoint operator changing the lower order terms. Since we have a coefficient $t^2$, this is not important for the subprincipal symbol $p_2'$ at $(0, x, \xi).$

Next, throughout our exposition we will assume that the
principal symbol $p_3(t, x, \tau, \xi)$
has a triple root $\tau = 0$ for $t = 0, x_0 \in U, \forall \xi \in \R^n \setminus \{0\}$
and we examine the operator having the form (\ref{eq:2.2}) with $\alpha(x_0, \xi)=0.$ We suppose in the following that $x_0
= 0.$ Since 
$\alpha(x, \xi) \geq 0,$ in a neighborhood of $0$, we have
\begin{equation} \label{eq:2.3}
\alpha (x, \xi) = \sum_{i, j = 1}^n x_i x_j f_{i, j} (x,\xi).
\end{equation}
Therefore the Hamilton map $F_{p_3}$ of $p_3$ for $t = 0, x = x_0, \tau = 0$ has
non-vanishing eigenvalues if only if $a_2(0, x_0, \xi) \neq 0, \xi \in \R^n \setminus \{0\}.$

\bigskip

%
It is well known that the subprincipal symbol and the eigenvalues of
the Hamilton map are invariant on the characteristic points $\rho \in
\Sigma_1 = \{ \rho \in T^*(G) \setminus \{0\}:\: p_3(\rho) = 0, dp_3(\rho) = 0\}$. Thus the number $\Pi$ defined by (1.7) can be expressed by
the subprincipal symbol $p_2'(0, x, \xi)$ and $a_2(0, x, \xi).$

We extend the coefficients of $a_2, \alpha,  b_k, k = 0, 1, 2, 3$ and $c_1,
c_0, d_0$  for $x \in \R^n$ as smooth functions. Thus in the analysis 
in Section 3-8 we will assume that the operator ${\mathcal P}$ is
defined in $\R^n$. Moreover, our arguments work with small modifications if
$a_2, \alpha,  b_k, c_k$, etc, are classical pseudodifferential operators with
symbols 
$$a_2(t, x, \xi) \in S^2_{1, 0}(\R^{n+1} \times \R^n), b_k(t, x, \xi)
\in S^k_{1, 0}(\R^{n+1} \times \R^n), c_k(t, x, \xi) \in S^k_{1,
  0}(\R^{n+1} \times \R^n)$$ 
 depending smoothly on the parameter $t$.\\

\section{Some classes of symbols}
%
%
\setcounter{equation}{0}
\setcounter{thm}{0}
\setcounter{prop}{0}  
\setcounter{lem}{0}
\setcounter{cor}{0} 
\setcounter{defn}{0}
\setcounter{rem}{0}
%

Let
\begin{equation}
\label{3}
f(t, \xi) = \frac{t}{3} + \langle \xi \rangle^{-2/3},
\end{equation}
where  
$\langle \xi \rangle^{2} = 1 + |\xi|^{2}.$ Then clearly $ f $ is a symbol in the class $ S^{0}_{1,
  2/3} $, when derivatives with respect to $ t $ are considered, but it
is in the class $ S^{0}_{1, 0} $ if $ t $ is just a parameter and no
derivatives with respect to $ t $ are involved.

It will be convenient for us to use the Weyl calculus formalism, in
the variables $ x $, in order to establish an \textit{a priori}
estimate for the operator we deal with. From now on $ t $ will be
regarded as a non-negative parameter.

Let $ \epsilon > 0 $ be a small positive number. We consider the
metric in $ T^{*}\R^{n} $ defined by
\begin{equation}
\label{eq:10metric}
g_{(x, \xi)}^{\epsilon} = \epsilon^{2} |dx|^{2} + \langle \xi \rangle^{-2} |d\xi|^{2}
\end{equation}
which is almost  the classical $ (1, 0)-$metric. 
In the following we will write $ g $, when there is no ambiguity. It is well known that $g$ is
a slowly varying metric. 

Let $ N $ be a positive integer. In what follows the size of $ N $ is
determined in terms of the problem.
We define the function
\begin{equation}
\label{eq:orderfctn}
m_{N}^{t}(\xi) = f^{-N}(t, \xi)
\end{equation}
and it is trivial to verify that $ m_{N}^{t} $ is an order function. Then we may define the classes $ S(m_{N}^{t}, g^{\epsilon}) $  of
symbols in the standard way.

We point out explicitly that $ t $ is just a parameter at this level
and that if there is no ambiguity we may omit it in our notation. We have the following
\begin{prop}
\label{prop:f-N}
$ f^{-N}(t, \xi) \in S(m_{N}^{t}, g^{\epsilon}) $.
\end{prop}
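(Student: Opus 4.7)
My plan is to reduce the claim to a routine symbol estimate, since $f^{-N}$ depends only on $\xi$ (and the parameter $t$). The condition $a \in S(m_N^t, g^\epsilon)$ with the metric $g^\epsilon_{(x,\xi)} = \epsilon^2 |dx|^2 + \la\xi\ra^{-2}|d\xi|^2$ amounts to the estimates
\begin{equation*}
|\partial_x^\alpha \partial_\xi^\beta a(x,\xi)| \leq C_{\alpha,\beta}\, m_N^t(\xi)\, \epsilon^{|\alpha|}\, \la\xi\ra^{-|\beta|}.
\end{equation*}
For $a=f^{-N}(t,\xi)$ the $x$-derivatives vanish identically, so only the $\xi$-derivatives require verification, and the bound reduces to showing
\begin{equation*}
|\partial_\xi^\beta f^{-N}(t,\xi)| \leq C_\beta\, f^{-N}(t,\xi)\, \la\xi\ra^{-|\beta|}, \qquad \forall\beta.
\end{equation*}

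The first step is to record the classical symbol estimate $|\partial_\xi^\gamma \la\xi\ra^{-2/3}| \leq C_\gamma \la\xi\ra^{-2/3 - |\gamma|}$ for $|\gamma|\geq 1$, which expresses the fact that $\la\xi\ra^{-2/3}\in S^{-2/3}_{1,0}$. Since $t$ enters only as an additive constant in $f$, for every $|\gamma|\geq 1$ we have $\partial_\xi^\gamma f(t,\xi) = \partial_\xi^\gamma \la\xi\ra^{-2/3}$, and therefore the same estimate holds for $f$ itself (uniformly in $t\geq 0$).

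Next I apply Faà di Bruno to $f^{-N} = \phi\circ f$ with $\phi(s)=s^{-N}$. The result is a finite sum, for each multi-index $\beta$, of terms of the form
\begin{equation*}
C_{\beta,k,\gamma_1,\ldots,\gamma_k}\, f^{-N-k}(t,\xi)\, \prod_{j=1}^{k} \partial_\xi^{\gamma_j} f(t,\xi),
\end{equation*}
summed over decompositions $\gamma_1+\cdots+\gamma_k = \beta$ with $|\gamma_j|\geq 1$. Using the symbol estimate above on each factor, the product is bounded by a constant times $\la\xi\ra^{-2k/3 - |\beta|}$, so each term is dominated by $C\, f^{-N-k}\la\xi\ra^{-2k/3}\,\la\xi\ra^{-|\beta|}$.

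The decisive elementary inequality, which makes everything fit together, is
\begin{equation*}
f(t,\xi) \geq \la\xi\ra^{-2/3}, \qquad \text{i.e.}\qquad f^{-1}(t,\xi)\leq \la\xi\ra^{2/3},
\end{equation*}
which is immediate from the definition of $f$ and $t\geq 0$. Raising to the $k$-th power gives $f^{-k}\leq \la\xi\ra^{2k/3}$, so $f^{-N-k}\la\xi\ra^{-2k/3}\leq f^{-N}$. Summing over the finitely many terms in the Faà di Bruno expansion yields the desired estimate with a constant $C_\beta$ depending only on $\beta$ and $N$ (in particular uniform in $t\geq 0$ and $\epsilon$). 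This completes the verification. No step is genuinely difficult; the only thing to watch is that the combinatorial factors in Faà di Bruno are absorbed into $C_\beta$, which is automatic since $\beta$ and $N$ are fixed.
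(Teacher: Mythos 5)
Your proof is correct and follows the same route as the paper: reduce to $\xi$-derivatives, differentiate $f^{-N}$ via the chain rule, and observe that $f(t,\xi)\geq\langle\xi\rangle^{-2/3}$ (equivalently $\langle\xi\rangle^{-2/3}/f\leq 1$) to absorb the extra negative powers of $f$. The paper computes one derivative explicitly and says "a simple iteration concludes the proof"; your Faà di Bruno argument is simply the precise form of that iteration, so the two are in substance identical.
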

\begin{proof}
Of course we must check only $ \xi $-derivatives. We have that 
$$ 
\partial_{\xi_{j}} f^{-N}(t, \xi) = -N f^{-N}(t, \xi) \frac{\langle
  \xi \rangle^{-2/3}}{f(t, \xi)} \left( - \frac{2}{3}\right)
\frac{\xi_{j}}{\langle \xi \rangle} \langle \xi \rangle^{-1}.
$$
Hence we have the estimate $ | \partial_{\xi_{j}} f^{-N}(t, \xi) |
\leq C_{N} f^{-N}(t, \xi) \langle \xi \rangle^{-1} $. A simple
iteration concludes the proof.
\end{proof}
\begin{rem} \label{fN}
In particular we deduce that 
$$ 
\partial_{\xi}^{\alpha} f^{-N}(t, \xi) = {\mathcal O}\left(
  N^{|\alpha|} f^{-N}(t, \xi) \langle \xi \rangle^{-|\alpha|}\right).
$$
\end{rem}
Given a symbol  $ a(t, x, \xi) \in S(m^{t}_{N}, g^{\epsilon}) $, which
we may also denote by $ a^{t} (x, \xi) $, the Weyl pseudodifferential
operator associated with it is defined by the formula
$$ 
a^{t w}u (x) = (2 \pi)^{-n} \int\int e^{i \langle x - y, \xi \rangle}
a^{t}\left( \frac{x+y}{2}, \xi\right) u(y) dy \ d\xi.
$$
We recall the composition rule for two symbols (see
e.g. \cite{h3}). Define 
$$ 
g^{\sigma}(w) = \sup_{w'} \frac{| \sigma( w, w') |^{2}}{g(w')}.
$$
Here $ \sigma $ denotes the symplectic form defined on $ T(T^{*}\R^{n})
\times T(T^{*}\R^{n}) $, which in our local coordinates is given by
$$ 
\sigma\left((x, \xi), (y, \eta)\right) = \langle y, \xi \rangle -
\langle x, \eta \rangle.
$$
\begin{thm}[Theorem 18.5.4, \cite{h3}]
\label{wcomp}
Let $ g $ be a temperate metric with $ g \leq g^{\sigma} $ and let $
m_{1} $, $ m_{2} $ be $( \sigma $, $ g )$ temperate order functions. Let
$ a_{j} \in S(m_{j}, g),\: j = 1,2 $. Then the composition of the associated
pseudodifferential operators is associated to a symbol map $ (a_{1},
a_{2}) \mapsto a = a_{1} \# a_{2} $ from $ S(m_{1}, g) \times S(m_{2},
g)$ to $ S(m_{1} m_{2}, g) $ and $ a $ is defined by
\begin{equation}
\label{formcomp}
a (x, \xi) = \exp\left( \frac{i}{2} \sigma(D_{x}, D_{\xi}; D_{y},
  D_{\eta}) \right) a_{1} (x, \xi) a_{2} (y, \eta)_{\big |_{(x, \xi) =
  (y, \eta)}}.
\end{equation}
Let
\begin{equation}
\label{h}
h (x, \xi)^{2} = \sup_{w} \frac{g_{x, \xi}(w)}{g^{\sigma}_{x, \xi}(w)},
\end{equation}
then we have that for every integer $ M $ the map associating $ a_{1}
$, $ a_{2} $ to the remainder term
$$ 
a_{1}\# a_{2}(x, \xi) - \sum_{j <M} \frac{ \left(i \sigma(D_{x}, D_{\xi};
  D_{y}, D_{\eta}) \right)^{j}}{2^{j} j!} a_{1}(x, \xi) a_{2}(y, \eta) 
$$
evaluated on the diagonal $ (x, \xi) = (y, \eta) $, is continuous with
values in $ S(h^{M} m_{1} m_{2} , g) $.
\end{thm}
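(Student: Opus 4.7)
The plan is to reduce the composition of two Weyl-quantized symbols to an explicit oscillatory integral and then extract the asymptotic expansion with a controlled remainder, working in the geometric framework of the temperate metric $g$.

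First I would establish the integral representation
\[
(a_1 \# a_2)(X) = \pi^{-2n} \iint e^{-2i\sigma(Y,Z)}\, a_1(X+Y)\,a_2(X+Z)\, dY\, dZ,
\]
where $X=(x,\xi)\in T^*\R^n$ and the integration runs over $Y,Z\in\R^{2n}$. This is a direct computation from the defining formula of the Weyl quantization via Fourier inversion, and it is equivalent to the formal operator identity (\ref{formcomp}). Next I would Taylor-expand $a_1(X+Y)a_2(X+Z)$ to order $M$ in $(Y,Z)$ around the origin, with the usual integral remainder. After integrating the polynomial part against $e^{-2i\sigma(Y,Z)}$ and recognizing, via integration by parts in $(Y,Z)$, the resulting Gaussian-type distributional identity $\int Y^\alpha Z^\beta e^{-2i\sigma(Y,Z)} dY dZ = c_{\alpha\beta}\,\delta$-derivatives, one obtains precisely the finite sum $\sum_{j<M}\bigl(i\sigma(D_x,D_\xi;D_y,D_\eta)\bigr)^j/(2^j j!)\,a_1(x,\xi)a_2(y,\eta)$ evaluated on the diagonal.

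The core task is to show that the remainder belongs to $S(h^M m_1 m_2, g)$ uniformly. I would write the remainder as an oscillatory integral of the Taylor integral remainder, split the $(Y,Z)$ integration into a near-diagonal region where $g_X(Y)+g_X(Z)\lesssim 1$ and a far region where the phase oscillates, and perform iterated integration by parts against the nondegenerate phase $-2\sigma(Y,Z)$ in the far region. One then applies: (a) the slow variation and $\sigma$-temperance of $g$ to compare $g_{X+Y}$ with $g_X$ up to polynomial factors in the $g$-distance; (b) the $(\sigma,g)$-temperance of $m_1$, $m_2$ to compare $m_j(X+Y)$ with $m_j(X)$ modulo the same polynomial factors; and (c) the definition (\ref{h}) of $h$, which precisely quantifies how each Taylor step converts a derivative of an $S(m_j,g)$-symbol (of size $g^{1/2}$) into an $h$-small quantity when paired with the symplectic form. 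Each order of expansion therefore contributes a factor $h$, and after $M$ steps one gets the claimed $h^M$ gain. Differentiating the remainder in $X$ and rerunning the argument yields the full symbol estimates and continuity of the map $(a_1,a_2)\mapsto a_1\# a_2$.

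The main obstacle is the uniform bookkeeping of weights in the remainder estimate: one must trade variation of $a_1,a_2$ away from the diagonal for powers of $h$ without ever losing a $g$-symbol estimate, and the exchange between the symplectic phase $\sigma(Y,Z)$ and the metric weights $g_X$, $g_X^\sigma$ has to be done compatibly with the temperance inequalities. A clean way to organize this is to rescale $(Y,Z)$ by $g_X^{-1/2}$, which turns the phase into $-2h_X^{-1}\sigma_0(Y',Z')$ with $\sigma_0$ a standard symplectic form, so that $h$ appears naturally as the oscillation parameter and the temperance hypotheses guarantee that integrations by parts can be performed uniformly in $X$. Once this rescaling is in place, the rest is a routine stationary phase estimate carried out to order $M$.
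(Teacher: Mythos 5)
The paper does not prove Theorem~\ref{wcomp}: it is stated as a classical result of the Weyl--H\"ormander calculus and cited to \cite{h3} (the composition and expansion results in Section~18.5 there), so there is no in-paper proof to compare with. Your sketch is a reasonable outline of that textbook argument. Two refinements would make it airtight. First, the polynomial loss factors coming from slow variation and temperance are quantified in the $g^{\sigma}$-distance, not the $g$-distance; this matters because it is $g^{\sigma}$, via the bilinear bound $|\sigma(T,T')|\leq g^{\sigma}_{X}(T)^{1/2}\,g_{X}(T')^{1/2}$, that makes the phase $-2\sigma(Y,Z)$ usable for the integrations by parts you propose in the far region. Second, the mechanism producing one factor of $h$ per Taylor step should be made explicit: a derivative of $a_{2}$ at $X$ is the $\sigma$-dual of the Hamiltonian vector $H_{a_{2}}(X)$, which satisfies $g^{\sigma}_{X}(H_{a_{2}}(X))\lesssim m_{2}(X)^{2}$; hence $g_{X}(H_{a_{2}}(X))\leq h(X)^{2}\,g^{\sigma}_{X}(H_{a_{2}}(X))\lesssim h(X)^{2}\,m_{2}(X)^{2}$, and pairing with a derivative of $a_{1}$ gives $|a_{1}'(X;H_{a_{2}}(X))|\lesssim m_{1}(X)\,g_{X}(H_{a_{2}}(X))^{1/2}\lesssim h(X)\,m_{1}(X)\,m_{2}(X)$. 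Iterating this is precisely what yields the $h^{M}$ gain. Finally, rescaling by $g_{X}^{-1/2}$ does not literally turn the phase into $-2h(X)^{-1}\sigma_{0}$ unless all Williamson eigenvalues of the pair $(g_{X},g^{\sigma}_{X})$ coincide; the correct statement is that the smallest oscillation frequency after rescaling is $h(X)^{-1}$, which is what the argument needs. With these details filled in, your proposal is a valid reconstruction of the classical proof.
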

\begin{rem}
\label{nox}
We explicitly note that the above formula (\ref{formcomp}) reduces to
the usual symbol composition formula (i.e. with no effect of the Weyl
operator definition) if $ a_{1} $ or $ a_{2} $ does not depend on $ x
$; thus (\ref{formcomp}) reduces to
$$ 
a (x, \xi) = \sum_{|\alpha| \geq 0}
\frac{1}{\alpha!} \partial_{\xi}^{\alpha} a_{1} (\xi) 
D_{x}^{\alpha} a_{2} (x, \xi),
$$
or the analogous symmetric formula if $ a_{2} $ is independent of $ x
$. 

Note that a different way of writing the above formula is
$$ 
a (x, \xi) = \exp\left( \frac{i}{2} \langle D_{y}, D_{\xi} \rangle
\right) a_{1} (\xi) a_{2} (y, \eta)_{\big |_{(x, \xi) =
  (y, \eta)}}.
$$
\end{rem}
\begin{rem}
\label{gepsilon}
An easy and explicit calculation yields that
\begin{equation}
\label{ges}
(g^{\epsilon}_{x, \xi})^{\sigma} = \langle \xi \rangle^{2} | dx|^{2} +
\epsilon^{-2} | d\xi |^{2},
\end{equation}
and consequently the function $ h $ is given by
\begin{equation}
\label{h1}
h (x, \xi) = \frac{\epsilon}{\langle \xi \rangle }.
\end{equation}
Evidently in this case 
$$ 
g^{\epsilon}_{x, \xi} \leq (g^{\epsilon}_{x, \xi})^{\sigma}.
$$
\end{rem}
We need to have a better control of this remainder terms in order to
estimate the composition symbol with respect to the parameter $ N $
introduced above.

We recall a result due to J.--M. Bony, \cite{jmb}, according to which
the composition $ a_{1} \# a_{2} $ is written as a finite sum plus a
remainder term:
\begin{multline}
\label{b1}
a_{1} \# a_{2} (x, \xi) = \sum_{p=0}^{M-1} \frac{1}{p!} \left(
  \frac{i}{2} \sigma(D_{x}, D_{\xi}; D_{y}, D_{\eta}) \right)^{p}
a_{1}(x, \xi) a_{2}(y, \eta)_{\big |_{(x, \xi) = (y, \eta)}} 
\\
+ R_{M}(a_{1}, a_{2})(x, \xi),
\end{multline}
where
\begin{multline}
\label{brem}
R_{M}(a_{1}, a_{2})(x, \xi) = \int_{0}^{1} \frac{(1 -
  \theta)^{M-1}}{(M - 1)!}  
\\
\cdot 
\frac{1}{(\pi \theta)^{2n}} \int\int
e^{-(2i/\theta) \sigma((x, \xi) - 
  (t, \tau), (x, \xi) - (y, \eta))} 
\\
\cdot 
\left( \frac{i}{2} \sigma(D_{t},
  D_{\tau}; D_{y}, D_{\eta}) \right)^{M} a_{1}(t, \tau) a_{2}(y, \eta) 
 dt d\tau dy d\eta d\theta
\end{multline}
\begin{prop}
\label{prop:bony}
Assume that $ a_{i} \in S(m_{i}, g) $, where $ g $ is a slowly
varying, temperate metric such that $ g \leq g^{\sigma} $. Then both $
R_{M} $ and the restriction to the diagonal of 
$$ \sigma((D_{x},
D_{\xi}); (D_{y}, D_{\eta})) a_{1}(x, \xi) a_{2}(y, \eta) 
$$ 
belong to
$ S(m_{1} m_{2} h^{M}, g) $.
\end{prop}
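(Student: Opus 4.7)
The proof splits naturally into two parts. For the restriction to the diagonal of $\sigma(D_x,D_\xi;D_y,D_\eta)^M a_1(x,\xi) a_2(y,\eta)$, the plan is to expand the multinomial $(\partial_\xi\cdot\partial_y-\partial_x\cdot\partial_\eta)^M$ into a finite sum of terms of the form $(\partial_\xi^\alpha\partial_x^\beta a_1)(x,\xi)\cdot(\partial_\xi^\gamma\partial_x^\delta a_2)(x,\xi)$ with $|\alpha|+|\beta|=M=|\gamma|+|\delta|$, and with each $\partial_{\xi_i}$ acting on one factor matched by a $\partial_{y_i}$ acting on the other (and similarly for the $(\partial_x,\partial_\eta)$ pairing). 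The symbol estimates for $a_i\in S(m_i,g)$ bound each factor by $m_i$ times $M$ $g$-seminorms of the dual directions that have been differentiated. Reinterpreting these matched pairs through the symplectic form produces exactly $M$ factors of $h$, by the defining identity $h^2(x,\xi)=\sup_w g_{x,\xi}(w)/g^\sigma_{x,\xi}(w)$, and differentiating further in $(x,\xi)$ and repeating the argument yields the full symbol seminorms in $S(m_1 m_2 h^M,g)$.

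For the remainder $R_M$ the plan is to follow Bony's oscillatory integral analysis from \cite{jmb}. View the inner integral as an oscillatory integral in $w=(t,\tau,y,\eta)$ with phase $\Phi_\theta(w)=-(2/\theta)\,\sigma((x,\xi)-(t,\tau),(x,\xi)-(y,\eta))$, whose Hessian is non-degenerate of order $1/\theta$. The amplitude already carries $\sigma(D_t,D_\tau;D_y,D_\eta)^M a_1 a_2$, so by the diagonal argument the formal restriction to $(t,\tau)=(y,\eta)=(x,\xi)$ already lies in $S(m_1 m_2 h^M,g)$; the task is to show that the true off-diagonal integral does not destroy this gain. One localizes with a partition of unity adapted to a $g$-slowly varying cover so that the symbol estimates for $a_1,a_2$ can be frozen on each chart, then integrates by parts in $w$ using operators of the form $\langle\Phi_\theta'\rangle^{-2}(1+\Phi_\theta'\cdot D_w)$ to trade the singular normalization $(\pi\theta)^{-2n}$ against Taylor remainders of the amplitude around $(x,\xi)$, ending with an estimate uniform in $\theta\in(0,1]$ and integrable against $(1-\theta)^{M-1}/(M-1)!$.

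The main obstacle is precisely this uniformity as $\theta\to0^+$: the singular prefactor $(\pi\theta)^{-2n}$ must be compensated exactly by the decay produced via integration by parts against the fast oscillation, and the balance is delicate. The hypothesis $g\leq g^\sigma$ enters crucially here, since it guarantees that the $g^\sigma$-adapted integration-by-parts operator dominates the $g$-based estimates, so that each integration by parts supplies a genuine factor of $h$ rather than a worse ratio. Slow variation and temperance of $g$ are then used to reduce the final estimate on each localization chart to a Gaussian Schur-type bound, and differentiating the defining integral for $R_M$ in $(x,\xi)$ and iterating the same argument gives symbol seminorms of arbitrary order, placing $R_M$ in $S(m_1 m_2 h^M,g)$ as claimed.
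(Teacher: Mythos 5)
The paper does not prove this proposition: it is stated as a result recalled from Bony \cite{jmb} (note that the statement as printed appears to have dropped an exponent; the operator should read $\sigma(D_x,D_\xi;D_y,D_\eta)^M$ to make the conclusion $h^M$ consistent with the expansion (\ref{b1})). So there is no in-paper proof to compare against, and your proposal is supplying an argument the authors chose to cite instead. Your treatment of the diagonal term is the standard computation and is essentially correct: each of the $M$ matched pairs of a $\partial_\xi$-derivative on one factor with a $\partial_x$-derivative on the other contributes a factor $\bigl(g(e^x)\,g(e^\xi)\bigr)^{1/2}=h$ for the metric at hand, and for a general slowly varying $g\leq g^\sigma$ the same is obtained by simultaneously diagonalizing $g$ and $g^\sigma$, a step you gesture at and should state explicitly. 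For $R_M$ the direction is right but the mechanism is slightly mischaracterized: the prefactor $(\pi\theta)^{-2n}$ is not something integration by parts must ``trade away'' --- it is precisely the normalization of the symplectic Gaussian $e^{-(2i/\theta)\sigma(\cdot,\cdot)}$, so the stationary-phase contribution at $(t,\tau)=(y,\eta)=(x,\xi)$ already carries the correct constant. What integration by parts (against the phase gradient), together with a confinement lemma that freezes $g$ on $g$-balls and a Schur-type kernel estimate using temperance, must do is control the off-diagonal tails uniformly in $\theta\in(0,1]$; once that uniformity is in hand the $\theta$-integral against $(1-\theta)^{M-1}/(M-1)!$ is harmless. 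Your sketch omits the confinement lemma and the Schur bound, which are the technical core of Bony's and H\"ormander's arguments (cf.\ \cite{h3}, Section 18.5), so as written it is a plausible roadmap rather than a proof.
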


\section{Scaling and multiplier}
%
%
\setcounter{equation}{0}
\setcounter{thm}{0}
\setcounter{prop}{0}  
\setcounter{lem}{0}
\setcounter{cor}{0} 
\setcounter{defn}{0}
\setcounter{rem}{0}
%

To obtain an a priori estimate and to deal with the lower order terms
we introduce a scaling 
\begin{equation}
\label{eq:pepsilon}
t = \epsilon^{2/3}s, \qquad x = \epsilon y, \qquad \epsilon > 0.
\end{equation}
Multiplying by $\epsilon^2$, we obtain an operator
\begin{eqnarray}
\label{eq:defPwithepsilon}
{\mathcal P} = D_s^3 - s a_2(\epsilon^{2/3} s, \epsilon y, D_y)D_s +
b_2(\epsilon^{2/3}s, \epsilon y, D_y)\nonumber 
 - \ep^{-2/3} \alpha(\ep y, D_y)D_s  \nonumber \\
 + \epsilon^{1/3} \Bigl[  s^2b_3(\epsilon^{2/3}s, \epsilon y, D_y) + 
b_1(\epsilon^{2/3} s, \epsilon y, D_y)D_s\Bigr] + \epsilon^{2/3} b_0(\epsilon^{2/3}s, \epsilon y)D_s^2 \nonumber \\
+\epsilon c_1(\epsilon^{2/3}s, \epsilon y, D_y) + \ep^{4/3} c_0(\ep^{2/3}s, \ep y)D_s + \ep^2 d_0(\ep^{2/3}s, \ep y).
\end{eqnarray}
Here we applied (\ref{eq:2.2}) and we use for simplicity the same notation ${\mathcal P}$ for the transformed operator. Moreover, $a_2, \: \alpha, b_k, k= 0, 1, 2, 3$, etc. are the symbols of Section 2.

Since we are interested in obtaining an estimate for $0 \leq t_0 \leq t \leq T$ with initial conditions on $t_0 = 0$,
with sufficiently small $T > 0$, we may think of $\epsilon$ as a
parameter which is going to be chosen sufficiently small; actually it
will be fixed below as 
 $\epsilon = {\mathcal O}(\frac{1}{N})$, where $N = \fr{13}{2} \Pi + N_0$ and $\Pi$ was defined in the Introduction.

We may also return to the notation $ (t, x) $ for the time and space variables respectively, without any risk
of misunderstanding.  Let
\begin{equation}
\label{eq:redop}
P_0 = D_t^3 - t a_2(0, \epsilon x, D_x)D_t +
b_2(0, \epsilon x, D_x)
\end{equation}
be the leading term in ${\mathcal P}$ having no factors depending on $\epsilon.$

It is convenient to use the following notation:
\begin{equation}
\label{eq:epsdefa}
a_{2}^{\epsilon}(t, x, D_{x}) = a_{2}(\epsilon^{2/3} t, \epsilon x, D_{x}),\: \alpha^{\ep} = \alpha(\ep x, D_x),
\end{equation}
and
\begin{equation}
\label{eq:epsdefb}
b_{j}^{\epsilon}(t, x, D_{x}) = b_{j}(\epsilon^{2/3} t, \epsilon x, D_{x}),\: c_j^{\ep}(t, x, D_x) = c_j(\ep^{2/3} t, \ep x, D_x)
\end{equation}
for the differential operators appearing in the
definition (4.2) of ${\mathcal P} $, emphasizing the dependence on
the parameter $ \epsilon $.

 For $ u $, $ v \in C_{0}^{\infty}(\overline{\R^{+}} \times 
\R^n) $, we denote by
$$ 
\langle u , v \rangle = \int_{\Omega} u(t, x) \bar{v}(t, x) dx, 
$$
the usual scalar product in $ L^{2}(\R^n) $ w.r.t. the space variables $x$. Also we denote by $\|v(t,.)\|_k$ the norms in the spaces
$H^k(\R^n).$

\bigskip

In order to deduce an energy estimate, we need a second order multiplier
operator. In what follows we use the multiplier
\begin{equation}
\label{eq:multiplier}
M(t, x, D_{t}, D_{x}) = \psi(t)\Bigl (D_{t}^{2} - \theta t a_{2}^{\epsilon}(t, x, D_{x}) - \theta\ep^{-2/3}\alpha^{\ep}(x, D_x)\Bigr),
\end{equation}
where  $ \theta$ denote a positive constant to be chosen later and $\psi(t) = \frac{\el}{t}, \: \lambda > 0.$  Clearly, we have the inequalities
\begin{equation} \label{psi}
-\psi'(t) > \lambda \psi(t) ,\: -\psi'(t) > \frac{\el}{t^2} > \frac{e^{-4\lambda t}}{t^2} = \psi^2(t).
\end{equation}

 For $ u \in C_{0}^{\infty}(\overline{\R^{+}} \times \R^n)$ and $0 \leq t_0 \leq t \leq T$
 we compute the expression 
$$ 
- 2 \im \langle f^{-2N}(t, D_{x}) \mathcal{P} u , M u \rangle. 
$$
Here $ f^{-2N}(t, D_{x}) $ denotes the pseudodifferential operator whose
symbol is $ f^{-2N}(t, \xi) \in S(m_{2N}^{t}, g^{\epsilon}) $. We suppose in addition that $u(t_0, x) = u_t(t_0, x) = u_{tt}(t_0, x) = 0$. Thus in the case $t_0 = 0$ the terms with $\psi(t)$ have a sense for $t=0$.

We have
\begin{align}
\label{eq:4.8}
- 2 \im \langle f^{-2N}(t, D_{x}){\mathcal  P}u , Mu \rangle 
&=  
2 \re \langle \psi f^{-2N} \left( \partial_{t}^{3} + t a_{2}^{\ep}
  \pa_{t} \right) u, \left(\pa_{t}^{2} + \theta t a_{2}^{\ep} \right)
u \ra 
\notag \\
&\phantom{=}
+ 2\re\theta \ep^{-4/3}\la \psi f^{-2N}  \ae \pa_t u,  \ae u \ra 
\notag \\
&\phantom{=}
+ 2 \theta \ep^{-2/3}\re\la \psi f^{-2N}\left(\partial_{t}^{3} + t
  a_{2}^{\ep} \pa_{t} \right) u,  \ae u \ra 
\notag \\
&\phantom{=}
+ 2\re\ep^{-2/3}\la \psi
f^{-2N} \ae \pa_t u, \left(\partial_{t}^{2} + \theta t a_{2}^{\ep}
\right) u \ra  
\notag
\\
&\phantom{=}
 + 2 \im \langle \psi f^{-2N} \epsilon^{1/3} t^{2} b_{3}^{\epsilon} u, 
\left(\partial_{t}^{2} + \theta t a_{2}^{\ep}  + \theta \ep^{-2/3} \ae \right) u \rangle
\notag  \\
&\phantom{=}
 + 2 \im \langle \psi f^{-2N} b_{2}^{\epsilon}u, \left( \pa_t^2 +
   \theta  t a_2^{\ep} + \theta \ep^{-2/3} \ae \right)  u \rangle + \text{lower order terms}
\notag 
\\
&= 
\sum_{j = 1}^{5}I_j + \sum_{j = 1}^4 J_j + \sum_{k= 1}^3 A_k +
\sum_{\nu= 1}^3 B_{\nu} + \text{lower order terms}. 
\end{align}
Here $I_j$, $J_j$ denote the terms arising from the scalar product
with the operator $D_t^3 - (t a_2^{\ep} + \ep^{-2/3} \ae)D_t $, the
$A_k$ come from the third order operator w.r.t. $D_x$ and finally the $B_{\nu}$
originate from the lower order term $b_2^{\ep}.$ 
Moreover, we denoted by ``lower order terms'' the terms of order 1 or 2
involving the operators $b_1, d_0, d_1$. It will be evident after the
discussion below that they do not have any influence whatsoever on the
energy estimate for $ \mathcal{P} $ that we are going to deduce and
hence, to avoid burdening the exposition with useless details we omit
a discussion of those terms.

In the next section we are going to estimate each term $I_j$ with the
purpose of putting in evidence a positive energy containing the weight
$ f^{-N} $ as well as $ f^{-N-1/2} $.

\section{Estimate of the terms in (\ref{eq:4.8})}
%
%
\setcounter{equation}{0}
\setcounter{thm}{0}
\setcounter{prop}{0}  
\setcounter{lem}{0}
\setcounter{cor}{0} 
\setcounter{defn}{0}
\setcounter{rem}{0}
%

\subsection{Estimate of $ I_{1} $}

For the term $I_1$ we have

\begin{eqnarray} 
\label{5}
- 2 \im \la \psi \fN D_t^3 u , D_t^2 u) & = & 
\nonumber
\\
2 \re \la \psi \fN u''', u'' \ra 
& = & 
\pa_t\Bigl( \psi \|f^{-N} u''\|^2\Bigr) + 2N/3\psi \|f^{-N -
  1/2}u''\|^2 - \psi' \|f^{-N}u'' \|^2. 
\end{eqnarray}
Here we write, as we did in the preceding section, $ f^{-2N} $
instead of $ \op(f^{-2N}) $, for the sake of simplicity. 

Note that we have used the fact that $ f $, or rather its powers, is self
adjoint as an operator w.r.t. the $ x $ variables when acting on
smooth functions with compact support.

\subsection{Estimate of $I_{3}$ and $J_3$}

Due to Proposition \ref{prop:f-N}, we have that, as a symbol, $
f^{-2N} \in S(m^{t}_{2N}, g^{\epsilon})  $, where $ \epsilon $ is a
positive parameter to be chosen below and the variable $ t $ can be
regarded, for the time being, as a parameter. The order function $
m^{t}_{2N} $ has been defined in (\ref{eq:orderfctn}).

Taking into account that we performed a dilation by $ \epsilon$, we conclude that
\begin{prop}
\label{prop:a2epsilon}
The symbols $ a_{2}^{\epsilon},\: \ae,\: b_{2}^{\epsilon} $ belong to $ S(\langle \xi
\rangle^{2}, g^{\epsilon}) $, as symbols in the $ x $ variables. It is
then straightforward to show that actually
\begin{equation}
\label{eq:a-and-b}
\epsilon^{-\frac{2}{3} j} \partial_{t}^{j} a(t, x, \xi) \in
S(\langle\xi\rangle^{2}, g^{\epsilon}),
\end{equation}
where $ a $ denotes $ a_{2}^{\epsilon}, \ae$ or $ b_{2}^{\epsilon} $.
\end{prop}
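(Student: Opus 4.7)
The plan is to unpack the definition of the symbol class $S(\langle\xi\rangle^2, g^\epsilon)$ for the metric $g^\epsilon_{(x,\xi)} = \epsilon^2 |dx|^2 + \langle\xi\rangle^{-2}|d\xi|^2$: a symbol $c(x,\xi)$ lies in this class precisely when, for all multi-indices $\alpha,\beta$, one has an estimate
\begin{equation*}
|\partial_x^\alpha \partial_\xi^\beta c(x,\xi)| \leq C_{\alpha,\beta}\, \epsilon^{|\alpha|}\, \langle\xi\rangle^{2-|\beta|}.
\end{equation*}
The factor $\epsilon^{|\alpha|}$ comes from the fact that each $dx$ in $g^\epsilon$ carries the weight $\epsilon$, and the factor $\langle\xi\rangle^{-|\beta|}$ from the usual Kohn--Nirenberg weight in $\xi$.

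For the first claim, I would simply apply the chain rule to $a_2^\epsilon(t,x,\xi) = a_2(\epsilon^{2/3}t,\epsilon x,\xi)$: each $x$-derivative pulls out exactly one factor of $\epsilon$, giving
\begin{equation*}
\partial_x^\alpha \partial_\xi^\beta a_2^\epsilon(t,x,\xi) = \epsilon^{|\alpha|} (\partial_X^\alpha \partial_\xi^\beta a_2)(\epsilon^{2/3}t,\epsilon x,\xi),
\end{equation*}
and then invoke the fact that $a_2(s,X,\xi)$ is a classical symbol of order $2$ in $\xi$ (with smooth dependence on $(s,X)$ over the relevant compact set), so that $|\partial_X^\alpha \partial_\xi^\beta a_2| \leq C_{\alpha,\beta}\langle\xi\rangle^{2-|\beta|}$ uniformly. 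The arguments for $\alpha^\epsilon$ and $b_2^\epsilon$ are identical; in the case of $\alpha^\epsilon$ there is simply no $t$-dependence to worry about.

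For the $t$-derivative estimate \eqref{eq:a-and-b}, the point is that every $\partial_t$ applied to the composed symbol produces an extra factor $\epsilon^{2/3}$ by the chain rule, so
\begin{equation*}
\partial_t^j a_2^\epsilon(t,x,\xi) = \epsilon^{2j/3} (\partial_s^j a_2)(\epsilon^{2/3}t,\epsilon x,\xi),
\end{equation*}
and multiplying by $\epsilon^{-2j/3}$ exactly cancels this factor. The resulting symbol $(\partial_s^j a_2)(\epsilon^{2/3}t,\epsilon x,\xi)$ is again a classical order-$2$ symbol in $\xi$ evaluated at the scaled arguments, so the same chain-rule computation as above shows it belongs to $S(\langle\xi\rangle^2,g^\epsilon)$ uniformly in $t$. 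The verification is essentially bookkeeping; the only thing one needs to be careful about is that the constants $C_{\alpha,\beta}$ are uniform in $\epsilon$ small and $t \in [0,T]$, which follows from the assumption that $a_2$, $\alpha$ and $b_2$ are smooth symbols defined on a set containing the compact image of the scaled variables. No genuine obstacle is present beyond this bookkeeping.
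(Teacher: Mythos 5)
Your proof is correct and is exactly the chain-rule bookkeeping the paper has in mind: the paper gives no explicit proof and only points to the dilation $t\mapsto\epsilon^{2/3}t$, $x\mapsto\epsilon x$, which is precisely what you unpack to produce the factors $\epsilon^{|\alpha|}$ and $\epsilon^{2j/3}$ matching the metric weight and cancelling the prefactor $\epsilon^{-2j/3}$. Your characterization of $S(\langle\xi\rangle^2,g^\epsilon)$ by the estimate $|\partial_x^\alpha\partial_\xi^\beta c|\leq C_{\alpha\beta}\,\epsilon^{|\alpha|}\langle\xi\rangle^{2-|\beta|}$ is the right one for this metric, and your observation that the constants are uniform because the scaled arguments stay in a compact set is the only point that genuinely needed saying.
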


\medskip

A typical situation we encounter in the estimate of $ I_{j} $ is the
evaluation of a norm or scalar product involving a commutator. 
We have
\begin{prop}
\label{prop:comm}
The commutator 
\begin{equation}
\label{eq:commut}
[ a_{2}^{\epsilon}(t, x, D_{x}) , f^{-2N}] 
\end{equation}
has a symbol in $ S(f^{-2N} N \epsilon \langle \xi \rangle,
g^{\epsilon}) $. 
\end{prop}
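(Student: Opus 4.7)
Since $f^{-2N}$ depends only on $\xi$ (with $t$ regarded as a parameter), the simplified composition formula of Remark \ref{nox} applies. The plan is to write, as asymptotic expansions,
\begin{align*}
f^{-2N}\,\#\,\aee &= \sum_{|\alpha|\geq 0} \frac{1}{\alpha!}\,\pa_\xi^\alpha f^{-2N}\cdot D_x^\alpha \aee, \\
\aee\,\#\,f^{-2N} &= \sum_{|\alpha|\geq 0} \frac{(-1)^{|\alpha|}}{\alpha!}\,\pa_\xi^\alpha f^{-2N}\cdot D_x^\alpha \aee,
\end{align*}
and subtract. Every term of even $|\alpha|$ cancels, so the symbol of the commutator reduces, modulo a Bony remainder, to
$$
-2\sum_{\substack{|\alpha|\geq 1\\ |\alpha|\text{ odd}}} \frac{1}{\alpha!}\,\pa_\xi^\alpha f^{-2N}\cdot D_x^\alpha \aee.
$$

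The two basic ingredients to size each term are Remark \ref{fN}, which gives $\pa_\xi^\alpha f^{-2N} = \mathcal{O}\bigl(N^{|\alpha|} f^{-2N}\la\xi\ra^{-|\alpha|}\bigr)$ with analogous bounds for further $\xi$-derivatives, and the scaling identity $\aee(t,x,\xi) = a_2(\ep^{2/3}t,\ep x,\xi)$, which implies that every $x$-derivative of $\aee$ produces a factor $\ep$, so that $D_x^\alpha \aee \in S(\ep^{|\alpha|}\la\xi\ra^2,g^{\ep})$ in agreement with Proposition \ref{prop:a2epsilon}. Multiplying, the leading $|\alpha|=1$ term belongs to $S(N\ep\, f^{-2N}\la\xi\ra,\,g^{\ep})$, which is exactly the class claimed. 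Each higher odd-order contribution $|\alpha|=2k+1$ lies in $S\bigl((N\ep)^{2k+1} f^{-2N}\la\xi\ra^{1-2k},\,g^{\ep}\bigr)$, hence is absorbed into the same class, and in fact is strictly smaller once $\ep N$ is kept bounded (recall that eventually $\ep=\mathcal{O}(1/N)$).

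It then remains to control the Bony remainder after truncating the expansion at a sufficiently large odd order $M$. Proposition \ref{prop:bony} places this remainder in $S\bigl(f^{-2N}\la\xi\ra^{2}\,h^{M},\,g^{\ep}\bigr)$ with $h = \ep/\la\xi\ra$; combining with the $N^M$ growth coming from the $M$-fold $\xi$-differentiation of $f^{-2N}$, the remainder is of order $(\ep N)^M f^{-2N}\la\xi\ra^{2-M}$, and choosing $M$ large it is subsumed by $S(N\ep f^{-2N}\la\xi\ra,\,g^{\ep})$.

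The main obstacle, and the part requiring care, is precisely this bookkeeping of the $N$-dependence: Proposition \ref{prop:bony} as stated does not make explicit how the seminorms of the remainder depend on $N$. One has to inspect the oscillatory integral representation (\ref{brem}) to verify that each $\xi$-differentiation of $f^{-2N}$ costs only a single power of $N$, so that the linear growth in $N$ of the final estimate is preserved; equivalently, one must check that $f^{-2N}$ is a bounded family in $S(f^{-2N},g^\ep)/N$ in the relevant sense. Once this is in place, the estimate $[\aee,f^{-2N}]\in S(N\ep f^{-2N}\la\xi\ra,g^\ep)$ follows from the expansion above.
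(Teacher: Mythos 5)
Your proposal is correct and follows essentially the same route as the paper: write the Weyl symbol of the commutator via the composition expansion of Theorem~\ref{wcomp} and Remark~\ref{nox}, size the leading term using Remark~\ref{fN} for $\partial_\xi^\alpha f^{-2N}$ together with the $\epsilon$-gain from each $x$-derivative of $a_2^\epsilon$, and control the tail by Bony's Proposition~\ref{prop:bony}. Two points where you are more careful than the printed proof are worth noting. First, you expand both $f^{-2N}\#\aee$ and $\aee\#f^{-2N}$ and record the cancellation of the even-order terms; the paper instead asserts $a_2^\epsilon\#f^{-2N}=a_2^\epsilon f^{-2N}$ pointwise, an identity valid for the standard (left) quantization but not in the Weyl calculus the paper has declared it is using — this is harmless for the final estimate, but your version is the correct Weyl computation. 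Second, you explicitly flag that the factor $N$ in the target order function must be traced through Bony's remainder \eqref{brem}; this is indeed the crux, since the proof of Proposition~\ref{prop:comm} in the paper only produces the class $S(f^{-2N}\epsilon\langle\xi\rangle,g^\epsilon)$ without the $N$, and the factor $N$ is only exhibited afterwards in the proof of Corollary~\ref{cor:comm} by writing out the first-order term of \eqref{brem}. Your treatment is therefore somewhat more self-contained on precisely the point that governs the later choice $\epsilon=\mathcal{O}(1/N)$.
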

\begin{cor}
\label{cor:comm}
If $ 0 < \epsilon \leq \epsilon_{0} $, where $ \epsilon_{0} $ denotes
a suitably small positive number depending on $ N $, then the
commutator in $(\ref{eq:commut})$ can be written as
\begin{equation}
\label{eq:comm2}
[ a_{2}^{\epsilon}(t, x, D_{x}) , f^{-2N}] = f^{-2N} \gamma_{1}^{\epsilon}(t, x, D_{x}), 
\end{equation}
where $ \gamma_{1}^{\epsilon} \in S(\langle \xi \rangle, g^{\epsilon}) $.
\end{cor}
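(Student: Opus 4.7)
The plan is to factor the weight $f^{-2N}(t, D_x)$ out of the commutator by composing with its formal inverse $f^{2N}(t, D_x)$ on the left. Let $c(t, x, \xi)$ denote the Weyl symbol of $[a_2^\epsilon(t, x, D_x), f^{-2N}(t, D_x)]$, which by Proposition 5.2 lies in $S(f^{-2N} N\epsilon \langle \xi \rangle, g^\epsilon)$. I would define
$$
\gamma_1^\epsilon(t, x, \xi) := f^{2N}(t, \xi) \,\#\, c(t, x, \xi),
$$
where $\#$ denotes the Weyl composition in the $x$-variables (with $t$ a parameter). Since $f^{2N}$ depends only on $\xi$, Remark 3.2 reduces this composition to the asymptotic expansion $\gamma_1^\epsilon \sim \sum_{|\alpha|\geq 0} (\alpha!)^{-1} \partial_\xi^\alpha f^{2N} \cdot D_x^\alpha c$, with a Bony-type remainder controlled by Proposition 3.2. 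Applying Theorem 3.1 yields $\gamma_1^\epsilon \in S(f^{2N} \cdot f^{-2N} N\epsilon \langle \xi \rangle, g^\epsilon) = S(N\epsilon \langle \xi \rangle, g^\epsilon)$.

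The next step is to verify the operator identity $[a_2^\epsilon, f^{-2N}] = f^{-2N}(t, D_x) \circ \gamma_1^\epsilon(t, x, D_x)$. At the symbol level this amounts to $f^{-2N} \,\#\, \gamma_1^\epsilon = c$. Associativity of the Weyl composition, which reflects associativity of operator composition under Weyl quantization, together with the fact that $f^{\pm 2N}$ are functions of $\xi$ alone, so that $f^{-2N} \,\#\, f^{2N} = f^{-2N} \cdot f^{2N} = 1$ by Remark 3.2, gives
$$
f^{-2N} \,\#\, \gamma_1^\epsilon = f^{-2N} \,\#\, \bigl(f^{2N} \,\#\, c\bigr) = \bigl(f^{-2N} \,\#\, f^{2N}\bigr) \,\#\, c = c,
$$
as required.

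Finally, one chooses $\epsilon_0$ to upgrade the inclusion $\gamma_1^\epsilon \in S(N\epsilon \langle \xi \rangle, g^\epsilon)$ into uniformly bounded seminorms in $S(\langle \xi \rangle, g^\epsilon)$. Because the constants $C_{\beta\gamma}$ in the seminorm estimates $|\partial_x^\beta \partial_\xi^\gamma \gamma_1^\epsilon| \leq C_{\beta\gamma}\, N\epsilon\, \langle \xi \rangle\, \epsilon^{|\beta|} \langle \xi \rangle^{-|\gamma|}$ depend only on the seminorms of $a_2^\epsilon$ and on the structure of the composition formula, and not on $N$, taking $\epsilon_0 = c_0/N$ for a sufficiently small universal $c_0 > 0$ ensures $N\epsilon \leq c_0$, so that $\gamma_1^\epsilon \in S(\langle \xi \rangle, g^\epsilon)$ with seminorms uniformly bounded for all $0 < \epsilon \leq \epsilon_0$.

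The main technical obstacle I anticipate is the careful bookkeeping of how factors $N^{|\alpha|}$ coming from $\xi$-derivatives of $f^{\pm 2N}$ (Remark 3.3) are absorbed against the factors $h^{|\alpha|} = (\epsilon/\langle \xi\rangle)^{|\alpha|}$ produced by the Bony remainder of Proposition 3.2. Each additional order in the expansion of $f^{2N} \,\#\, c$ effectively costs a factor $(N\epsilon)^{|\alpha|}$ beyond the leading $N\epsilon \langle \xi \rangle$, and it is precisely this pairing that forces the threshold $\epsilon_0 = O(1/N)$ and guarantees that the resulting symbol class bound for $\gamma_1^\epsilon$ carries no implicit dependence on $N$.
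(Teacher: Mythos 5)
Your proof is correct, and it takes a genuinely cleaner and more complete route than the paper's own argument. The paper's proof stops at the Bony remainder formula with $M=1$: it writes out the oscillatory integral, observes that the integrand carries an explicit factor $f^{-2N}(t,\eta)$ and a coefficient proportional to $N\epsilon$, and then asserts that the commutator ``can be written'' as $f^{-2N}\gamma_1^\epsilon$. What it does not do is explain how a factor $f^{-2N}(t,\eta)$ sitting \emph{inside} the integral over $\eta$ becomes a left-factor $f^{-2N}(t,D_x)$ \emph{outside}, at the level of operators. Your parametrix construction $\gamma_1^\epsilon := f^{2N}\#c$ resolves exactly this: because $f^{\pm 2N}$ depend only on $\xi$, Remark~\ref{nox} gives $f^{-2N}\#f^{2N}=1$ on the nose, and associativity of $\#$ (inherited from operator composition under Weyl quantization) then yields $f^{-2N}\#\gamma_1^\epsilon = c$ as an exact identity, not just a leading-order statement. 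This is an improvement in rigor over what is printed.

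Your final paragraph correctly identifies the one point that needs care in either proof, namely that the seminorm constants for both $c\in S(f^{-2N}N\epsilon\langle\xi\rangle,g^\epsilon)$ and for $f^{2N}\in S(f^{2N},g^\epsilon)$ grow like $N^{|\alpha|}$ with the number of $\xi$-derivatives (Remark~\ref{fN}), so the naive conclusion ``$\gamma_1^\epsilon\in S(N\epsilon\langle\xi\rangle,g^\epsilon)$ with $N$-independent seminorms'' is not automatic. The saving grace, which you correctly invoke, is that in the composition expansion each extra $\xi$-derivative of $f^{\pm 2N}$ (costing $N\langle\xi\rangle^{-1}$) pairs with an extra $x$-derivative of $a_2^\epsilon$ (costing $\epsilon\langle\xi\rangle$), producing a net factor $N\epsilon$ per order; once $N\epsilon\le c_0$, every seminorm is bounded by universal constants. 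If I were grading for publication I would ask you to display this pairing for a general term of the expansion rather than stating it, but as a proof sketch the argument is sound, and it makes explicit a bookkeeping that the paper leaves implicit.

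One small point worth flagging: one should also check that $f^{2N}$ itself is a $\sigma,g^\epsilon$-temperate order function, so that Theorem~\ref{wcomp} and Proposition~\ref{prop:bony} genuinely apply to the composition $f^{2N}\#c$. This is routine (it follows from the same computation as Proposition~\ref{prop:f-N}, with the same $N^{|\alpha|}$ growth), but since you are applying the calculus to $f^{2N}$ and not just $f^{-N}$, it deserves a sentence.
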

\begin{proof}[Proof of Proposition $\ref{prop:comm}$]
Since $ f^{-2N} $ does not depend on $ x $, the
bracket can be written as a product: 
$$ 
\symb\left( \left [ a_{2}^{\epsilon}(t, x, D_{x}) , f^{-2N}(t, D_{x}) \right]
\right) = a_{2}^{\epsilon} f^{-2N} - f^{-2N} \# a_{2}^{\epsilon},
$$
where $ \symb(b) $ denotes the symbol of the operator $ b $.

Using Proposition \ref{prop:bony}, as well as definitions
(\ref{eq:10metric}) and (\ref{h}), we obtain that the r.h.s. of the
above identity belongs to $ S(f^{-2N} \epsilon \langle \xi\rangle,
g^{\epsilon}) $. 
\end{proof}
\begin{proof}[Proof of Corollary $\ref{cor:comm}$]
Choosing $ M=1 $ in (\ref{brem}), we get
\begin{multline*}
\sigma\left( \left [ a_{2}^{\epsilon}(t, x, D_{x}) , f^{-2N}(t, D_{x}) \right]
\right) (t, x, \xi) 
\\
= \int_{0}^{1} \frac{1}{(\pi \theta)^{2n}} \int
\int e^{- (2i/\theta) \sigma((x, \xi) - (z, \zeta), (x, \xi) - (y,
  \eta))} \frac{i}{2} D_{z}a_{2}^{\epsilon}(t, z, \zeta) 
\\
\cdot
D_{\eta} f^{-2N}(t, \eta) dz d\zeta dy d\eta d\theta.
\end{multline*}
Since
$$ 
\partial_{z} a_{2}^{\epsilon} \partial_{\eta} f^{-2N} 
= \frac{4N}{3}  \epsilon
  \langle(\partial_{z}a_{2})^{\epsilon}, \frac{\eta}{\langle \eta \rangle}\rangle f^{-2N}
  \langle \eta \rangle^{-1} \frac{\langle \eta \rangle^{-2/3}}{t + \langle \eta
    \rangle^{-2/3}},
$$
we see that besides the order function $ f^{-2N} \epsilon \langle
\xi\rangle $ we have also a factor $ N $, which justifies the presence
of $ \epsilon $. Here we used the notation $
(\partial_{z}a_{2})^{\epsilon}$ to denote the symbol $ (\partial_{z}
a_{2})(\epsilon^{2/3}t, \epsilon z, \xi) $. See also definition
(\ref{eq:epsdefa}). 
\end{proof}

Due to the above statements we may conclude that 
\begin{equation}
\label{eq:commutator}
[ f^{-2N} , a_{2}^{\epsilon} ] = f^{-2N} \alpha_{1}^{\epsilon},
\end{equation}
for some first order symbol $ \alpha_{1}^{\epsilon} $. Therefore
\begin{align}
\label{eq:I_{3}}
I_{3} &= \el \left( \langle f^{-2N} a_{2}^{\epsilon} u', u''\rangle +
  \langle u'' ,  \left( a_{2}^{\epsilon} f^{-2N} + f^{-2N}
    \alpha_{1}^{\epsilon} \right) u' \rangle \right)
\notag \\
&= \el  \left( \langle f^{-2N} a_{2}^{\epsilon} u', u''\rangle +
  \langle f^{-2N} a_{2}^{\epsilon} u'' , u' \rangle + \langle  f^{-2N}
  u'',  \alpha_{1}^{\epsilon}  u' \rangle \right)
\\
&= \el \partial_{t} \langle f^{-2N} a_{2}^{\epsilon} u' , u' \rangle +
\N \el \langle f^{-2N - 1} a_{2}^{\epsilon} u' , u' \rangle
\notag \\
&\phantom{=}
-\el \langle f^{-2N} \partial_{t}\left( a_{2}^{\epsilon}\right) u' , u'
\rangle
+ \el \langle  f^{-2N}
  u'',  \alpha_{1}^{\epsilon}  u' \rangle
\notag \\
&=
 \partial_{t} \langle \el f^{-2N} a_{2}^{\epsilon} u' , u' \rangle + 2\lambda \la \el \fN \aee u', u'\ra +
\N \el \langle f^{-2N - 1} a_{2}^{\epsilon} u' , u' \rangle
\notag \\
&\phantom{=}
+ I_{3,1} + I_{3,2}.
\notag
\end{align}
Here we denoted by $ u' = \partial_{t}u $ and $ u'' = \partial_{t}^{2}u $. Moreover
$ \partial_{t}\left( a_{2}^{\epsilon}\right)  $ denotes the operator
whose symbol (or coefficients in the differential case) are the $ t
$-derivative of $ a_{2}^{\epsilon} $. 

Repeating the same argument, we obtain
\begin{align} 
\label{eq:5.7}
\ep^{4/3} I_5 
&= -2 \theta \im \la \psi \fN \ae D_t u, \ae u \ra 
\notag \\
&= 
2 \theta \re \la
\psi \fN \ae u', \ae u \ra  \\ 
&= 
\theta [ \la \psi \fN \ae u', \ae u \ra + \la \ae u, \psi \fN \ae
u'\ra ]
\notag \\ 
&= 
\theta \pa_t  \la \psi \fN \ae u, \ae u \ra - \theta \psi' \la \fN
 \ae u, \ae u \ra + 2N/ 3 \theta \la \psi f^{-2N - 1} \ae u, \ae u \ra.
\notag
\end{align}

\subsection{Estimate of $I_4$}

Let us consider $ I_{4} $. We have
\begin{align} 
\label{eq:I4}
I_4 &= 
- 2 \im \psi \la \fN t \aee D_t u, \theta t \aee u \ra 
\notag \\
&= 2 \el \re
\la  \fN \aee u', \theta t \aee u \ra 
\notag \\ 
&= 
\theta \pa_t \Bigl(\el \la \fN \aee u , t \aee u \ra\Bigr) + 2\theta
t\lambda \el \la \fN \aee u,  \aee u \ra
\notag \\
&\phantom{=} 
+ \theta 2N/3 t \el \la  \fNt \aee u,  \aee u \ra - \theta \el \la  \fN
\aee u, \aee u \ra
\\
&\phantom{=}
- \theta \el \la \fN (\aee)_t u , t \aee u) - \theta \el \la \fN \aee
u, t (\aee)_t u \ra. 
\notag
\end{align}

Here we just used the fact that both $ f^{-2N} $ and $ a_{2}^{\epsilon} $ are
self adjoint in $ L^{2}(\Omega) $, $ t $ being a parameter at
this stage.

\subsection{Estimate of $I_2$ and $J_1$}

Let us consider the expression for $ I_{2} $, see (\ref{eq:4.8}),
\begin{align}
\label{eq:I_{2}}
I_{2} &= 2 \re \el \langle f^{-2N} \partial_{t}^{3} u, \theta 
a_{2}^{\epsilon} u \rangle \notag \\
&=
\theta \el \left( \langle f^{-2N} u''', a_{2}^{\epsilon} u \rangle +
  \langle a_{2}^{\epsilon} u, f^{-2N} u''' \rangle \right)
\notag \\
&=
\theta \el  \partial_{t} \bigg ( \langle f^{-2N} u'' , a_{2}^{\epsilon} u
\rangle + \langle a_{2}^{\epsilon} u , f^{-2N} u'' \rangle 
- \langle f^{-2N} u' , a_{2}^{\epsilon} u' \rangle \bigg) \\
&\phantom{=}
+ \N \theta \el \re\bigg ( \langle f^{-2N-1} u'' , a_{2}^{\epsilon} u
\rangle + \langle a_{2}^{\epsilon} u , f^{-2N-1} u'' \rangle 
\notag \\
&\phantom{=}
- \langle f^{-2N-1} u' , a_{2}^{\epsilon} u' \rangle \bigg)
+ \theta \el 2 \re \langle f^{-2N} \tilde{\alpha}_{1}^{\epsilon} u', u''
\rangle \notag \\
&\phantom{=}
 - \theta \el \re\bigg ( \langle f^{-2N} u'' , (\partial_{t}a_{2}^{\epsilon}) u
\rangle + \langle (\partial_{t} a_{2}^{\epsilon}) u , f^{-2N} u'' \rangle 
- \langle f^{-2N} u' , (\partial_{t} a_{2}^{\epsilon}) u' \rangle
\bigg).
\notag \\
&=
\theta  \partial_{t} \bigg ( 2 \el \re \langle f^{-2N} u'' , a_{2}^{\epsilon} u
\rangle  
- \el\re\langle f^{-2N} u' , a_{2}^{\epsilon} u' \rangle \bigg) 
\notag \\
&\phantom{=}
+ \N \theta \bigg ( 2 \el \re \langle f^{-2N-1} u'' , a_{2}^{\epsilon} u
\rangle 
- \el \re\langle f^{-2N-1} u' , a_{2}^{\epsilon} u' \rangle \bigg)
\notag \\
&\phantom{=}
+2 \lambda \theta \bigg ( 2 \el \re \langle f^{-2N} u'' , a_{2}^{\epsilon} u
\rangle 
- \el \re\langle f^{-2N} u' , a_{2}^{\epsilon} u' \rangle \bigg) + \sum_{k=1}^{4} I_{2,k}.
\notag
\end{align}
A few words are in order. Here $ \tilde{\alpha}_{1}^{\epsilon} $
denotes a suitable first order pseudodifferential operator originating
from a commutator exactly as it occurred for the other terms above. 

Moreover in deducing (\ref{eq:I_{2}}) the following identity has been
used:
\begin{multline*}
\partial_{t} \bigg(\langle f^{-2N} u'' , a_{2}^{\epsilon} u
\rangle + \langle a_{2}^{\epsilon} u , f^{-2N} u'' \rangle 
- \langle f^{-2N} u' , a_{2}^{\epsilon} u' \rangle \bigg) 
\\
=
\langle f^{-2N} u''', a_{2}^{\epsilon} u \rangle +
\langle f^{-2N} u'', a_{2}^{\epsilon} u' \rangle +
\langle a_{2}^{\epsilon} u' , f^{-2N} u'' \rangle
+ \langle a_{2}^{\epsilon} u, f^{-2N} u''' \rangle 
\\
- \langle f^{-2N} u'', a_{2}^{\epsilon} u' \rangle 
- \langle f^{-2N} u' , a_{2}^{\epsilon} u'' \rangle
+ \text{terms being } \mathscr{O}(N) 
+ \text{ terms involving } \partial_{t}a_{2}^{\epsilon}.
\end{multline*}
Thus let us examine the first four terms in the r.h.s. above. We have
\begin{multline*}
\langle f^{-2N} u''', \aee u \rangle 
 +\langle \aee u' , f^{-2N} u'' \rangle
+ \langle a_{2}^{\epsilon} u, f^{-2N} u''' \rangle 
- \langle f^{-2N} u' , \aee u'' \rangle
\\
=
\langle f^{-2N} u''', \aee u \rangle
+ \langle \aee u, f^{-2N} u''' \rangle 
+ \langle [ f^{-2N} , a_{2}^{\epsilon} ] u', u'' \rangle
\\
=
2 \re \langle f^{-2N} u''', \aee u \rangle
+ \langle f^{-2N} \tilde{\alpha}_{1}^{\epsilon}  u', u'' \rangle.
\end{multline*}
To obtain the last line we used Corollary \ref{cor:comm} and the fact
that $ a_{2}^{\epsilon} $ is a self-adjoint operator.\\

For $J_1$ we use the same argument and we obtain
\begin{align}
\label{eq:5.10}
\ep^{2/3} J_1 &= 2 \theta\im \la \psi \fN D_t^3 u, \ae u \ra 
\notag \\
&= 2 \theta \re \la
\psi \fN \pa_t^3 u, \ae u \ra
\notag \\ 
&= \theta \la \psi \fN \pa_t^3 u, \ae u \ra + \theta \la \ae u , \psi
\fN \pa_t^3 u \ra
\notag \\  
&= \theta \pa_t \Bigl( 2\psi \re \la \fN u'', \ae u\ra   - \re \la
\psi \fN u', \ae u'\ra\Bigl)
\notag \\
&\phantom{=}
- \theta\psi'\Bigl( 2 \re \la \fN u'', \ae u \ra - \re \la \fN u', \ae
u' \ra \Bigr)
\notag \\
&\phantom{=}
+ \theta 2N/3 \Bigl( 2\psi  \re\la f^{-2N - 1} u'', \ae u \ra - \re
\la \psi f^{-2N - 1} u', \ae u' \ra \Bigr)
\notag \\
&\phantom{=}  
+ \theta 2\psi  \re \la f^{-2N} \beta_1 u', u'' \ra
\end{align}
with a first order operator $\beta_1$.

\subsection{Estimate of $J_2$ and $J_4$}

We have
\begin{align} 
\label{4}
\ep^{2/3}(J_2 + J_4) &= - 2\im \la \psi \fN t \aee D_t u, \theta \ae u
\ra - 2 \im \la \psi \fN \ae D_t u, \theta t \aee u \ra 
\notag \\
&= 
2 \re \Bigl[\la \psi \fN t \aee u' , \theta \ae u \ra +  \la \psi \fN
\ae u', \theta t \aee u \ra\Bigr]
\notag \\
&= \theta 2 \pa_t \Bigl(\re \la \el \fN \aee u, \ae u \ra \Bigr)
 + \theta 4N/3\re \la \el \fNt \aee u, \ae u \ra 
\notag \\ 
&\phantom{=}
+4 \theta \lambda \re \la \el \fN \aee u, \ae u \ra  - 2 \theta \re
\la \el \fN (\aee)_t u, \ae u \ra. 
\end{align}

\subsection{Estimate of $J_3$}

Consider, see (\ref{eq:4.8}),
\begin{align*}
\ep^{2/3} J_3 &= 2\re \la \psi \fN \ae u', u'' \ra
\\
& = \psi \la  f^{-2N}\ae u',  u''\ra +  \psi \la u'', \ae f^{-2N} u'
\ra + \psi \re\la f^{-N} u'', \gamma_1 f^{-N} u'\ra 
\\
&= \pa_t \la \psi f^{-2N} \ae  u', u'\ra  - \psi'\re \la f^{-2N} \ae u', u'\ra  + J_{3.1}
\\
&\phantom{=}
 + 2N/3 \la \psi f^{-2N - 1}\ae  u',  u' \ra, 
\end{align*}
where
$$
J_{3,1} = \re\la \psi  f^{-N} u'', \gamma_1 f^{-N} u'\ra
$$
and $\gamma_1$ is a first order operator.

\subsection{Estimate of $ A_1$}

Let us rewrite $ i A_1 $ in the following way
\begin{align*}
i A_1 &= 2i \im \langle \el f^{-2N} \epsilon^{1/3} t b_{3}^{\epsilon}
u,  u'' \rangle \\
&=
\epsilon^{1/3} t \el  \bigg(  \langle f^{-2N}  b_{3}^{\epsilon} u, u''
\rangle - \langle u'', f^{-2N} a_{3}^{\epsilon} u \rangle \bigg).
\end{align*}
We have the identity
\begin{align}
\label{eq:ident7}
\partial_{t} 2i \im \langle f^{-2N}  b_{3}^{\epsilon} u, u' \rangle 
&= 
\langle f^{-2N}  b_{3}^{\epsilon} u, u'' \rangle 
- \langle u'', f^{-2N} b_{3}^{\epsilon} u \rangle 
+ \langle f^{-2N}  b_{3}^{\epsilon} u', u' \rangle
-\langle u' , f^{-2N} b_{3}^{\epsilon} u' \rangle \\
&\phantom{=}
- \N \bigg( \langle f^{-2N-1}  b_{3}^{\epsilon} u, u' \rangle 
- \langle u' , f^{-2N-1} b_{3}^{\epsilon} u \rangle \bigg) 
\notag \\
&\phantom{=}
+ \langle f^{-2N} (\partial_{t} b_{3}^{\epsilon} ) u , u' \rangle
- \langle u' , f^{-2N} (\partial_{t} b_{3}^{\epsilon} ) u \rangle.
\notag
\end{align}
Plugging (\ref{eq:ident7}) into the above expression for $ iA_1 $,
we then obtain
\begin{align}
\label{eq:I_{7}}
A_{1} &= \epsilon^{1/3}  t \el \partial_{t} \left( 2 \im \langle
  f^{-2N} b_{3}^{\epsilon} u, u' \rangle \right) 
+ 2\epsilon^{1/3} t \el \N  \im \langle
  f^{-2N-1} b_{3}^{\epsilon} u, u' \rangle \nonumber \\
&\phantom{=}
- \epsilon^{1/3} t \el 2 \im \langle f^{-2N} b_{3}^{\epsilon} u' , u'
\rangle  
-2\epsilon^{1/3} t \el \im \langle f^{-2N} (\partial_{t}
b_{3}^{\epsilon}) u , u' \rangle 
\nonumber \\
&= 2\epsilon^{1/3}  \partial_{t} \left( t \el  \im \langle
  f^{-2N} b_{3}^{\epsilon} u, u' \rangle \right) - 2\epsilon^{1/3} \el   \im \langle
  f^{-2N} b_{3}^{\epsilon} u, u' \rangle \nonumber \\
&\phantom{=}
+ 2\lambda \epsilon^{1/3}  t \el   \im \langle
  f^{-2N} b_{3}^{\epsilon} u, u' \rangle
+ \epsilon^{1/3} t \el 4N/3  \im \langle
  f^{-2N-1} b_{3}^{\epsilon} u, u' \rangle 
\nonumber \\
&\phantom{=}
+ \sum_{k=1}^{2} A_{1, k}.
\end{align}

\subsection{Estimate of $ A_2$ and $A_3$}
Using the calculus it is not difficult to show that there is a
symbol of first order,  $ \hat{\alpha}_{0}^{\epsilon} $, such that
\begin{equation}
\label{eq:5.14}
A_{2} = 2 \epsilon^{1/3}  \theta t^2 \el  \im \langle f^{-2N} 
 b_{3}^{\epsilon} u, a_{2}^{\epsilon}  u \rangle 
= \epsilon^{1/3}  \theta t^2 \el 2\im \langle f^{-2N}
\gamma_0^{\epsilon} a_{2}^{\epsilon} u,
a_{2}^{\epsilon} u \ra.
\end{equation}
Here our argument is based on the fact the principal symbols of the operators
$a_2^{\ep}(t, x, D_x)$ and $b_3^{\ep}(t, x, D_x)$ are real-valued.
 Thus we obtain
$$
(a_2^{\ep}(t, x, D_x) b_3^{\ep}(t, x, D_x))^* = b_2^{\ep}(t, x, D_x)
a_3^{\ep}(t, x, D_x)) + \alpha_4^{\epsilon}(t, x, D_x)
$$
with a pseudodifferential operator $\alpha_4^{\epsilon}$ of order
4. Since $a_2^{\ep}(t, x, D_x)$ is elliptic, it is easy to find a zero order operator
$\gamma_0^{\ep}(t, x, D_x)$ so that $\alpha_4^{\epsilon}(t, x,
D_x) = (a_2^{\ep})^*  \gamma_0^{\ep}(t, x, D_x) a_2^{\ep}.$ 

\bigskip

For $A_3$ we obtain straightforwardly that
$A_3 = 2 \theta \ep^{-2/3}\im\la \psi f^{-2N} \ep^{1/3}t^2 b_3^{\ep} u , \ae u\ra.$

\section{Energies}
%
%
\setcounter{equation}{0}
\setcounter{thm}{0}
\setcounter{prop}{0}  
\setcounter{lem}{0}
\setcounter{cor}{0} 
\setcounter{defn}{0}
\setcounter{rem}{0}
%

Summarizing the expression of all terms in Section 5, we may rewrite
(\ref{eq:4.8}) in the following form 
\begin{eqnarray}
\label{eq:6.1}
-2\im \langle f^{-2N} \mathcal{P} u, Mu \rangle = \partial_{t}
\mathscr{E}_N(u)  + \N \mathscr{E}_{N + 1/2}(u)
+ 2\lambda \mathscr{E}_{N}(u)
+ \mathscr{R},
\end{eqnarray}
where
\begin{align}
\label{eq:EN}
\mathscr{E}_N(u) & = \psi \Bigl[\| f^{-N} u''\|_0^{2}   
                  +(1 - \theta) \re \la f^{-2N} (t \aee + \ep^{-2/3} \ae) u', u' \ra 
\\
&\phantom{=}
    + \theta \|f^{-N} (t \aee u + \ep^{-2/3} \ae ) u\|_0^2 
+ \theta 2\re \la f^{-2N} u'', (t \aee + \ep^{-2/3} \ae) u\ra \Bigr]
\notag \\                                                  
&\phantom{=}
+ \epsilon^{1/3} t \el 2 \im \la f^{-2N} b_{3}^{\epsilon} u,
u'\ra .   
\notag             
\end{align}
Moreover $ \mathscr{R} $ includes all terms that can be considered
``errors'', since they do not contribute to the energy $
\mathscr{E}_{N} $ or $ \mathscr{E}_{N+1/2} $. Note that somewhat
improperly we include into $ \mathscr{R} $ also norms multiplied by $
\frac{\el}{t^2} $ which are positive, but require ``ad hoc'' treatement. Recall that we have $0 \leq t_0 \leq t \leq T$ and we suppose that $u(t_0, x) = u_t(t_0, x) = u_{tt}(t_0, x) = 0$, so the expression with factor $\psi$ or $\frac{\el}{t^2}$ below make sense when $t_0 = 0$.

The quantity $ \mathscr{R} $ is defined as
\begin{align}
\label{eq:RR}
\mathscr{R} &= \frac{\el}{t^2} \Bigl(\| f^{-N} u''\|_0^{2}  
+ \theta \ep^{-4/3} \la f^{-2N} \ae u, \ae u \ra  
+(1- \theta)  \ep^{-2/3}\re\la f^{-2N} \ae  u', u'\ra   
\\
& \phantom{=}
+ \theta\ep^{-2/3} 2\re \la \fN u'', \ae u \ra\Bigr)   
  -\theta \el \|f^{-N} \aee u\|_0^2     
\notag                        
\\& \phantom{=}
- 2 \theta \ep^{-2/3}\el \re \la f^{-2N} (\aee)_t u, \ae u \ra  
\notag
\\
&\phantom{=}
 - 2 \theta \el \re  \la f^{-2N}(\aee)_{t} u, t \aee u\ra             
+ e^{-2\lambda t} \left( -\la f^{-2N} (\aee)_t u', u' \ra 
+ \la f^{-2N} u'', \alpha_{1}^{\epsilon} u'\ra \right)   
\notag
\\
&\phantom{=}
+ \theta \el 2 \re \langle f^{-2N} \tilde{\alpha}_{1}^{\epsilon} u',
u'' \rangle  
+ 2 \theta \ep^{-2/3}\psi \re \la \beta_1  f^{-N} u', f^{-N} u''\ra   
+\ep^{-2/3}\psi \re  \la \gamma_1 f^{-N} u', f^{-N} u'' \ra  
\notag
\\
&\phantom{=}
 - \theta \el \bigg ( 2 \re \langle f^{-2N} u'' , (\aee)_t u
\rangle                           
- \re \langle f^{-2N} u' , (\aee)_t u' \rangle\bigg)    
\notag                        
\\                          
&\phantom{=}
- 2\epsilon^{1/3} \el   \im \langle  f^{-2N} b_{3}^{\epsilon} u, u'
\rangle                                          
- \epsilon^{1/3} t \el 2 \im \langle f^{-2N} b_{3}^{\epsilon} u' , u'
\rangle                                          
\notag
\\
&\phantom{=}
-2\epsilon^{1/3} t \el \im \langle f^{-2N} (\partial_{t}
b_{3}^{\epsilon}) u , u' \rangle             
\notag                       
\\
&\phantom{=}
+ \epsilon^{1/3}  \theta t^2 \el 2\im \langle f^{-2N}
\gamma_0^{\epsilon} a_{2}^{\epsilon} u,
a_{2}^{\epsilon} u \ra
\notag                      
\\
&\phantom{=}
+ 2 \theta \ep^{-1/3}\el t\im\la f^{-2N}  b_3^{\ep} u
, \ae u \ra 
\notag                      
\\
&\phantom{=}
+ \sum_{\nu= 1}^3 B_{\nu} + \text{lower order terms}.
\notag
\end{align}
To keep the exposition simple it is convenient to denote by $\sum_{j=
  1}^{10} {\mathscr R}_j$ the sum of 10 terms corresponding to the 10
lines in the expression of ${\mathscr R}$ above.

Consider  the sum
$$ 
S_{k} = \psi\|f^{-k} u''\|_0^2 + \theta \psi  \|f^{-k} (t\aee +
\ep^{-2/3} \ae) u \|_0^2 + 2\theta \psi \re \la f^{-k} (t\aee +
\ep^{-2/3} \ae) u ,f^{-k} u'' \ra. 
$$
Choose $\theta = 1/3$. Then we deduce
\begin{equation} 
\label{eq:6.4}
S_k = \frac{1}{3} \psi \|f^{-k} u''\|_0^2 + \frac{1}{6} \psi \|f^{-k}(t
\aee u + \ep^{-2/3} \ae)u \|_0^2 + \frac{2}{3}\psi \|f^{-k}(u''
+\frac{1}{2} (t \aee + \ep^{-2/3} \ae)u))\|_0^2. 
\end{equation}
It is clear that
$$
\|f^{-k}(u'' + \frac{1}{3} (t \aee + \ep^{-2/3} \ae)u)\|_0^2 \leq
2\|f^{-k}(u'' + \frac{1}{2} (t \aee + \ep^{-2/3} \ae)u)\|_0^2 +
\frac{1}{18} \|f^{-k}(t \aee + \ep^{-2/3} \ae)u\|_0^2
$$
and we get
\begin{equation} 
\label{eq:mul}
 S_k \geq \frac{1}{3}\psi \|f^{-k} u''\|_0^2 +
\frac{4}{27}\psi \|f^{-k} (t \aee + \ep^{-2/3} \ae) u)\|_0^2 +
\frac{1}{3} \psi\|f^{-k} (u'' +\frac{1}{3} (t \aee + \ep^{-2/3} \ae)
u)\|_0^2. 
\end{equation}
In the same way we obtain
\begin{eqnarray*}
{\mathscr R}_1 + {\mathscr R}_2 &=&  \fr{\el}{t^2} \Bigl(\fr{1}{3}
\big\|f^{-N} u''\|_0^2 + \fr{1}{6} \ep^{-4/3}\big\|f^{N} \ae u\|_0^2 +
\fr{2}{3} \|f^{-N} \Bigl(u'' + \fr{1}{2} \ep^{-2/3} \ae u\Bigr)\|_0^2 \\
&  & 
+ \fr{2}{3} \ep^{-2/3} \re \la f^{-2N} \ae u', \ae u'\ra - \fr{1}{3} \|f^{-N} t \aee u\|_0^2\Bigr) \\
&\geq  & 
\fr{\el}{3t^2}\Bigl( \|f^{-N} u''\|_0^2 + \fr{4}{9}
 \ep^{-4/3}\|f^{N} \ae u\|_0^2 +  \big\|f^{-N} \Bigl(u'' +  \ep^{-2/3}
 \ae u\Bigr)\big\|_0^2 \\
&  &
+ 2 \ep^{-2/3} \re \la f^{-2N} \ae u', \ae u'\ra -  \|f^{-N} t \aee u\|_0^2 \Bigr).
\end{eqnarray*}
To simplify the notations in the following we will write $\a0$ for
$a_2^{\ep}(0, x, D_x)$, while $a_2^{\ep}$ will denote the operator $a_2^{\ep}(t, x, D_x).$
\def\a0{{\bf a}_2^{\ep}}
Therefore $\aee(t, x, \xi) = \a0(0, x, \xi) + \ep^{2/3} t \tilde{a}_2^{\ep}(t, x, \xi)$ and we have
$$\frac{2}{3} \psi\re \la f^{-k} (ta_{2}^{\epsilon} + \ep^{-2/3} \ae) u' , f^{-k} u'\ra + \frac{1}{6} \psi\| f^{-k} (t\aee + \ep^{-2/3} \ae) u \|_0^{2}$$
$$=\frac{2}{3}\psi \re \langle f^{-k} (t \a0 + \ep^{-2/3} \ae) u' , f^{-k} u'\ra 
 + \frac{1}{6} \psi \| f^{-k}(t \a0 + \ep^{-2/3} \ae)u \|_0^{2} +t\el \ep^{2/3}A_{k}^{(2)}(u).$$
Here $t \ep^{2/3}A^{(2)}_{k}(u)$ denote a sum of terms which have
coefficient $t\ep^{2/3}$. It will be easy to absorb them taking $\ep$
and $t$ small and we will discuss this in Section 7 after the terms in
${\mathscr E}_N$ will have been conveniently prepared. To start we need the following
\begin{prop}
\label{prop:weighted-garding}
There exist positive constants $ C_{1} $ and $ C_{2} $, independent of
the positive integer $ k $ such that for $0 < \ep \leq \ep_0(k)$ we have
\begin{equation}
\label{eq:6.5}
\re \langle f^{-k} \a0 v, f^{-k} v
\rangle \geq C_{1} \| f^{-k} v\|_{1}^{2} - C_{2} \|
f^{-k} v \|_{0}^{2},
\end{equation}
for every $ v \in C_{0}^{\infty} $. The constant $C_1$ depends only on the symbol $a_2^{\ep}(0, x, \xi).$
\end{prop}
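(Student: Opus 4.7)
The plan is to commute the Fourier multiplier $f^{-k}$ through $\a0$ and reduce the inequality to a classical G\aa rding estimate for the elliptic operator $\a0$ alone, treating the commutator as a small perturbation when $k\ep$ is small. Setting $w = f^{-k} v$ and using self-adjointness of $f^{-k}$, I would split
\begin{equation*}
\la f^{-k} \a0 v, f^{-k} v \ra = \la \a0 w, w \ra + \la [f^{-k}, \a0] v, f^{-k} v \ra.
\end{equation*}
By hypothesis $(H_{1})$, the symbol satisfies $a_{2}(0, y, \xi) \geq c |\xi|^{2}$ for some $c > 0$, so the scaled symbol $a_{2}^{\ep}(0, x, \xi) = a_{2}(0, \ep x, \xi)$ enjoys the same lower bound; moreover the identity $\pa_{x}^{\beta} a_{2}^{\ep}(0, x, \xi) = \ep^{|\beta|}(\pa_{y}^{\beta} a_{2})(0, \ep x, \xi)$ shows that the relevant $S(\la\xi\ra^{2}, g^{\ep})$ seminorms are bounded uniformly in $\ep \in (0,1]$. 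The classical G\aa rding inequality for positive elliptic second order operators therefore yields
\begin{equation*}
\re \la \a0 w, w \ra \geq C_{1} \| w \|_{1}^{2} - C_{1}' \| w \|_{0}^{2},
\end{equation*}
with $C_{1}, C_{1}'$ depending only on $c$ and finitely many derivatives of $a_{2}(0, \cdot, \cdot)$, and hence independent of both $k$ and $\ep$.

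For the commutator I would apply Proposition~\ref{prop:comm} and Corollary~\ref{cor:comm} with $k$ in place of $2N$: the symbol of $[\a0, f^{-k}]$ has, by the explicit computation in the proof of Proposition~\ref{prop:comm}, the pointwise bound $O(k \ep f^{-k} \la \xi \ra)$, and accordingly $[f^{-k}, \a0] = f^{-k} \gamma_{1}^{\ep}$ with $\gamma_{1}^{\ep}$ carrying a symbol of size $O(k\ep \la\xi\ra)$. Commuting $f^{-k}$ one more time,
\begin{equation*}
\la f^{-k} \gamma_{1}^{\ep} v, f^{-k} v \ra = \la \gamma_{1}^{\ep} w, w \ra + \la [f^{-k}, \gamma_{1}^{\ep}] v, f^{-k} v \ra.
\end{equation*}
The first inner product is estimated by Cauchy--Schwarz and Young's inequality as
\begin{equation*}
| \la \gamma_{1}^{\ep} w, w \ra | \leq C k\ep \, \|w\|_{1} \|w\|_{0} \leq \eta \|w\|_{1}^{2} + \frac{C^{2} (k\ep)^{2}}{4\eta} \|w\|_{0}^{2}
\end{equation*}
for any $\eta > 0$, while Proposition~\ref{prop:bony} shows that $[f^{-k}, \gamma_{1}^{\ep}]$ has symbol in $S((k\ep)^{2}, g^{\ep})$, yielding an $L^{2}$-bound of order $(k\ep)^{2} \|w\|_{0}^{2}$.

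Choosing $\eta = C_{1}/2$ and then defining $\ep_{0}(k)$ so that $k \, \ep_{0}(k) \leq 1$, the $\eta \|w\|_{1}^{2}$ contribution is absorbed into half of the main term $C_{1} \|w\|_{1}^{2}$, and the remaining $\|w\|_{0}^{2}$ terms collect into a single constant $C_{2}$ bounded uniformly in $k$ (because $(k\ep)^{2} \leq 1$ throughout the allowed range). Recalling that $w = f^{-k} v$, this is precisely (\ref{eq:6.5}) with both $C_{1}/2$ and $C_{2}$ independent of $k$. The most delicate point is confirming the uniformity in $\ep$ of the G\aa rding constant $C_{1}$; this is exactly where the scaling $x \mapsto \ep x$ built into $\a0$ is essential, since it turns the $x$-dependence of $a_{2}$ into a mild perturbation at the level of symbol seminorms.
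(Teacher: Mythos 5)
Your proof is correct and follows essentially the same route as the paper: split off the commutator $[f^{-k},\a0]$, apply the classical G\aa rding inequality (with constants uniform in $k$ and $\ep$, by the scaling argument you note) to $\la \a0 w, w\ra$ with $w = f^{-k}v$, and control the commutator contributions via Proposition~\ref{prop:comm} and Corollary~\ref{cor:comm}, absorbing the resulting $\eta\|w\|_1^2$ term for $\ep\leq\ep_0(k)$. You merely spell out the second commutation (through $\gamma_1^\ep$) more explicitly than the paper, which summarizes that step as the bound $|X_2|\leq c_3\|f^{-k}v\|_{1/2}^2$; the underlying mechanism is the same.
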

\begin{proof}
The proof consists in just making sure that we may commute the weight
operator $ f^{-k} $ with $ a_{2}^{\epsilon} $ and estimate the
errors, which naturally depend on $ N $. We have that
$$
\langle f^{-k} a_{2}^{\epsilon}(0, x, D_{x}) v, f^{-k} v
\rangle 
= \langle \a0 f^{-k} v, f^{-k} v
\rangle + \langle [f^{-k} , \a0] v, f^{-k} v
\rangle
= X_{1} + X_{2}.
$$
Keeping in mind that $ a_{2}^{\epsilon} $ is uniformly elliptic and using the
strict G\aa rding inequality for it, we obtain that
$$ 
X_{1} \geq c_{1} \| f^{-k} v \|^{2}_{1} - c_{2} \|
f^{-k} v \|_{0}^{2},
$$
for two suitable positive constants $ c_{1} $ and $ c_{2} $
independent of $ k $. We are
thus left with $ X_{2} $. By Proposition \ref{prop:comm} and Corollary
\ref{cor:comm} we see that if $ \epsilon $ is small enough depending
on $ k $, i.e. if $
\epsilon \leq \epsilon_{0}(k) $, there is a positive constant $ c_{3} $ independent of $k$,
such that 
$$ 
| X_{2} | \leq c_{3} \| f^{-k} v \|^{2}_{1/2} \leq \delta
\| f^{-k} v \|_{1}^{2} + c_{3}' \delta^{-1} \|
f^{-k} v \|^{2}_{0}.
$$
Choosing $ \delta $ conveniently small, but independent of $ \epsilon
$ and $ k $, we obtain the assertion.
\end{proof}
To treat the negative terms, we apply the
following lemma which will play a key role in the next section. 
\begin{lem}
\label{lemma:weight-inequality}
For $t \geq 0 $ and $ \xi \in \R^n $ we have
\begin{equation}
\label{eq:6.6}
\frac{1}{\langle\xi \rangle^{2}} +  t f^{2} \geq  f^{3}.
\end{equation}
\end{lem}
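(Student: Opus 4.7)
The plan is to reduce the inequality to a polynomial identity in two positive variables and then observe that the difference is a sum of two non-negative monomials. Concretely, I would introduce the abbreviation $s = \langle\xi\rangle^{-2/3} > 0$, so that by the definition of $f$ in \eqref{3} we have
\begin{equation*}
f(t,\xi) = \frac{t}{3} + s, \qquad \frac{1}{\langle\xi\rangle^{2}} = s^{3}.
\end{equation*}
With this substitution the claimed inequality \eqref{eq:6.6} becomes the purely algebraic statement
\begin{equation*}
s^{3} + t\left(\frac{t}{3}+s\right)^{\!2} \;\geq\; \left(\frac{t}{3}+s\right)^{\!3},
\end{equation*}
which has to be verified for all $t\geq 0$ and $s>0$.

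The main (and essentially only) step is then a direct expansion. Writing out both sides,
\begin{equation*}
\left(\frac{t}{3}+s\right)^{\!3} = \frac{t^{3}}{27} + \frac{t^{2}s}{3} + ts^{2} + s^{3},
\qquad
t\left(\frac{t}{3}+s\right)^{\!2} = \frac{t^{3}}{9} + \frac{2t^{2}s}{3} + ts^{2},
\end{equation*}
so that the difference of the two sides equals
\begin{equation*}
s^{3} + t\left(\frac{t}{3}+s\right)^{\!2} - \left(\frac{t}{3}+s\right)^{\!3} = \left(\frac{1}{9}-\frac{1}{27}\right)t^{3} + \left(\frac{2}{3}-\frac{1}{3}\right)t^{2}s = \frac{2t^{3}}{27} + \frac{t^{2}s}{3}.
\end{equation*}
Since $t\geq 0$ and $s>0$, both terms on the right are non-negative, which yields the inequality.

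There is no real obstacle: the whole content of the lemma is the elementary identity above. The only point worth emphasizing in the write-up is that one must \emph{not} try to prove the sharper bound $\langle\xi\rangle^{-2}\geq f^{3}$ or $tf^{2}\geq f^{3}$ separately, since neither holds in general (the first fails for large $t$, the second for $t$ close to $0$); the two terms compensate each other exactly through the cross-terms coming from the expansion of $(t/3+s)^{3}$, which is why the $\frac{1}{3}$ in the definition of $f$ is the natural normalization here.
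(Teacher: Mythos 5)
Your proof is correct and uses essentially the same elementary approach as the paper: expand $f^{3}=(t/3+s)^{3}$ with $s=\langle\xi\rangle^{-2/3}$ and compare terms directly. The only cosmetic difference is that you exhibit the difference of the two sides as the exact non-negative quantity $\tfrac{2t^{3}}{27}+\tfrac{t^{2}s}{3}$, whereas the paper reaches the same conclusion by bounding the leftover terms $t\bigl[\tfrac{2}{3}\langle\xi\rangle^{-4/3}+\tfrac{t}{9}\langle\xi\rangle^{-2/3}\bigr]$ by $\tfrac{2}{3}tf^{2}$, which is a slightly looser but equivalent step.
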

\begin{proof}
The proof is a simple verification. In fact $ f^{3} = f^{2} t/3 + f^{2}
\langle\xi\rangle^{-2/3} $. The latter quantity is equal to $ f^{2} t/3
+ \langle \xi\rangle^{-2} + \frac{2}{3} t \langle\xi\rangle^{-4/3} + \frac{t^{2}}{9}
\langle \xi \rangle^{-2/3} $. It is clear that 
$$
t \Bigl[\frac{2}{3} \la  \xi \ra^{-4/3} + \frac{t}{9} \la \xi
\ra^{-2/3}\Bigr] \leq \frac{2}{3} t f^2
$$
and this accomplishes the proof. 
\end{proof}
To examine the term $J_{\alpha, k}= \psi \ep^{-2/3} \re \la f^{-k} \ae u ,
f^{-k} u \ra$, we use the fact that the operator $\ae$ is positive and
write
$$
J_{\alpha, k} \geq \ep^{-2/3}\psi \re \la [f^{-k}, \ae] u, f^{-k}
u\ra.
$$
The symbol of the operator $\ae$ is $ \mathscr{O}(\epsilon^{2}) $
uniformly and for $0 < \ep < \ep_1(k)$ we obtain 
$$
|J_{\alpha, k}| \leq c_4 \ep^{1/3} \psi\|f^{-k} u\|_{1/2}^2
$$
with $c_4$ independent on $\ep$ and $k$. Now an application of  Lemma
6.1 yields
$$
\la \xi \ra f^{-2k} \leq t \la \xi \ra f^{-2k -1} + \la \xi \ra ^{-1}
f^{-2k-3} \leq \la \xi \ra^{-1/3} t \la \xi \ra ^2f^{-2k} + \la \xi
\ra ^{-1/3} f^{-2k - 2}
$$
since $\la \xi \ra^{-2/3} f^{-1} \leq 1$. Therefore
$$
|J_{\alpha, k}| \leq c_4 \ep^{1/3} \el \|f^{-k}u\|_{1/2}^2 + c_4 \ep^{1/3}
\psi \|f^{-k - 1} u\|_0^2
$$
and for small $0 < \ep \leq \ep_1(k)$ taking into account
(\ref{eq:6.5}), we get
\begin {eqnarray} 
\label{eq:6.7}
2 \psi |\re \la f^{-k} ( t \a0 + \ep^{-2/3} \ae) u', f^{-k} u'\ra |&
\geq &  C_3 \el \|f^{-k} u'\|_1^2 \nonumber \\
- C_2 \el \|f^{-k}u' \|_0^2
 - c_4 \ep^{1/3}\psi \|f^{-k - 1} u' \|_0^2.
\end{eqnarray}
\def\a0{{\bf a}_2^{\ep}}
\def\b0{{\bf b}_2^{\ep}}
\def\be{b_3^{\ep}}

We introduce now the energy by the following
\begin{defn}
For a non negative integer $ k $ we define the $ k $-th energy as
\begin{align*}
E_{k}(u) & = \psi\Bigl(\frac{1}{3} \| f^{-k} u'' \|_0^{2} +\frac{2}{3}
\re \la f^{-k}(t \a0 + \ep^{-2/3} \ae) u' , f^{-k}u' \ra
\\
&\phantom{=}  
+ \frac{1}{6} \| f^{-k}(t \a0 u + \ep^{-2/3} \ae) u \|_0^{2} +
\frac{2}{3} \|f^{-k}( u'' +\frac{1}{2} (t \a0 + \ep^{-2/3}
\ae)u)\|_0^2\Bigr). 
\end{align*}
\end{defn}
For the expression of the energy  ${\mathscr E}_k(u)$, $ k = N$, $ N+
1/2$ we have, with the notations above, the representation
$$
{\mathscr E}_k(u) = E_k(u) + t \ep^{2/3}\el A_k^{(2)}(u) + 2
\ep^{1/3}t \el \im \la f^{-k} \be u,f^{-k} u' \ra.
$$
The last two terms on the right hand side can be considered as small
perturbations. 

Moreover in the energy $E_{N+ 1/2}(u)$ we have no positive terms
involving 
$$
\|f^{-N - 3/2}u\|_1^2, \quad  \|f^{-N - 3/2} u'\|_0^2 \quad \text{ and
} \quad  \|f^{-N - 5/2} u\|_0^2.
$$ 
These turn out very useful in order to absorb a number of ``errors''
using Lemma 6.1.

For this purpose we will obtain several new positive terms exploiting
a part of the energy $E_{N+ 1/2}(u).$ 
The same argument applies to the energy $E_N(u).$ 

Let us now consider the following identity, where $ k $ is a positive
integer and $g$ denotes a smooth function in the same class as $ u
$:
$$ 
\psi f^{-2k} 2 \re g'\bar{g} = \pa_t \Bigl( \psi f^{-2k} |g|^2 \Bigr)
- \psi'f^{-2k} |g|^2 + 2k/3  \psi f^{-2k - 1} |g|^2
$$
which implies
$$ 
\psi f^{-2k + 1} |g'|^2 \geq \pa_t \Bigl(\psi  f^{-2k} |g|^2 \Bigr)  -
\psi'f^{-2k} |g|^2 + (2k/3 -1) \psi  f^{-2k - 1} |g|^2.
$$
Taking $g = \pa_t u$, $k = N+1$, we have
\begin{equation} 
\label{eq:6.8}
\psi f^{-2N- 1} |u''|^2 \geq \pa_t \Bigl( \psi f^{-2N - 2} |u'|^2 \Bigr) - \psi'f^{-2N - 2} |u'|^2 + (2N/3 -1/3) \psi f^{-2N - 3} |u'|^2,
\end{equation}
while taking $g = u$, $k = N +2 $, we get
\begin{equation} 
\label{eq:6.9}
\psi f^{-2N - 3} |u'|^2 \geq \pa_t \Bigl( \psi  f^{-2N - 4} |u|^2
\Bigr) - \psi' f^{-2N - 4} |u|^2 + (2N/3 +1/3) \psi  f^{-2k - 5}
|u|^2. 
\end{equation}
Combining (\ref{eq:6.8}) and (\ref{eq:6.9}), we get
\begin{align} 
\label{eq:6.10}
\psi \|\fNt u'' \|^2 
&\geq 
\pa_t \Bigl( \psi \|f^{N -1}  u'\|^2 \Bigr) - \psi'\|f^{-N -1} u'\|_0^2
\notag \\
&\phantom{\geq} 
 + (2N/3 - 4/3) \psi \|f^{-N - 3/2} u'\|_0^2 + \pa_t \Bigl( \psi \|f^{-N
   - 2} u \|_0^2\Bigr) 
\notag \\
&\phantom{\geq}
- \psi'\|f^{-N - 2} u \|_0^2 + \frac{2N + 1}{3} \psi \|f^{N - 5/2} u\|_0^2.
\end{align}
Also we obtain easily the inequality
\begin{eqnarray} 
\label{eq:6.11}
e^{-2\lambda t} \| f^{-N-\frac{1}{2}} \partial_{t}u \|^{2}_{1} & \geq & \partial_{t}
\left( e^{-2\lambda t} \| f^{-N-1} u \|^{2}_{1} \right) +
2\lambda e^{-2\lambda t} \| f^{-N-1} u \|_{1}^{2} 
\nonumber \\
&   &
+ \frac{1}{3}(2N+1) e^{-2\lambda t} \| f^{- N - \frac{3}{2}} u \|^{2}_{1}.
\end{eqnarray}
By using the calculus of pseudodifferential operators, Proposition 6.1
and (\ref{eq:6.7}), we may write
\begin{multline} 
\label{eq:6.12}
2 \psi \re \la f^{-N - 1/2} (t \a0 + \ep^{-2/3} \ae) u', f^{-N - 1/2} u'\ra 
\geq  2C_3 \el\|\fNt u'\|_1^2 
\\
- C_4 \el\|f^{-N- 1/2} u'\|_0^2 - C_5 \ep^{1/3} \psi\|f^{-N - 3/2} u'\|_0^2.
\end{multline}
Next, taking into account the inequalities 
(\ref{eq:6.10}), (\ref{eq:6.11}) and (\ref{eq:6.12}), for small $\ep$ we get 
\begin{multline} 
\label{eq:6.13}
\frac{1}{3} \psi \| f^{-N - 1/2} u''\|_0^{2} + \frac{2}{3} \psi \re \la f^{-N - 1/2}( t\a0 + \ep^{-2/3} \ae) u',
  f^{-N-1/2} u' \ra  \\
 \geq   \frac{1}{3} \Bigl[\pa_{t} \left( \psi \| f^{-N-1}  u' \|_0^{2}
\right) -\psi' \| f^{-N-1} u' \|_0^{2}  \\
 + (2N-5)/3 \psi  \| f^{-N-\frac{3}{2}}  u' \|_0^{2}  
+ \partial_{t} \left( \psi \| f^{-N-2} u \|_0^{2} \right) -\psi' \| f^{-N-2} u \|_0^{2}   \\
+ (2N+ 1)/3 \psi \| f^{-N-\frac{5}{2}} u \|_0^{2}\Bigr]\\
+ \frac{1}{3} C_3 \Bigl[ \partial_{t}
\left(  \el \| f^{-N-1} u \|^{2}_{1} \right) +
 2\lambda \el \| f^{-N-1} u \|_{1}^{2} 
+ (2N + 1)/3 \el \| f^{- N - \frac{3}{2}} u \|^{2}_{1}\Bigr]  \\
+ \frac{1}{3} C_3  \el\|f^{-N- 1/2} u'\|_1^2- \frac{1}{3} C_4  \el\|f^{-N- 1/2} u'\|_0^2.
\end{multline}
Here the term $-C_5 \ep^{1/3} \psi \|f^{-N - 3/2} u'\|_0^2$ has been
absorbed by diminishing the coefficient in the term $\frac{2N-5}{3} \psi\|f^{-N
  -3/2} u'\|_0^2$ (compare with $\frac{2N - 4}{3} \psi \|f^{-N - 3/2} u'\|_0^2$ in the above
inequality.)

Therefore, using (\ref{eq:6.13}), we have for small $t$ and large $\lambda$ the estimate
\begin{multline} 
\label{eq:6.14}
 \frac{5N}{9} \psi \Bigl[\frac{1}{3} \|f^{-N-1/2}u'\|_0^2 + \frac{2}{3} \re \la f^{-N-1/2} (t \a0 + \ep^{-2/3} \ae)u' , f^{-N-1/2} u'\ra\Bigr]\\
+  \frac{N}{9}  \psi \left( \frac{1}{3} \| f^{-N - 1/2} u''\|_0^{2} + \frac{2}{3} \re \la f^{-N - 1/2}(t \a0 + \ep^{-2/3} \ae )u',
  f^{-N-1/2} u' \ra\right)  \\
 \geq \partial_t \Bigl( \frac{N}{27} \psi \| f^{-N-1}  u' \|_0^{2} + \frac{N}{27}  \psi \| f^{-N-2} u \|_0^{2}  + \frac{N}{27} C_3  \el  \| f^{-N-1} u \|^{2}_{1} \Bigr)
 \\
+ \frac{5N}{9}  \Bigl[\frac{1}{3} \psi\|f^{-N-1/2}u''\|_0^2 + \frac{2}{3} \el\re \la f^{-N-1/2}( t \a0 + \ep^{-2/3} \ae) u' , f^{-N-1/2} u'\ra\Bigr]
\\\
+\frac{N}{9} \Bigl[-\frac{\psi'}{3} \|f^{-N - 1} u'\|_0^2 + \frac{2N -5}{9} \psi\|f^{-N - 3/2} u'\|_0^2  - \frac{\psi'}{3}\|f^{-N - 2} u\|_0^2 \\
+ \frac{2N + 1}{9}\psi  \|f^{-N-5/2} u\|_0^2 + \frac{2C_3}{3} \lambda \el \|f^{-N - 1} u\|_1^2\\
+ \frac{(2N + 1)C_3}{9}\el\|f^{-N - 3/2} u\|_1^2 + \frac{ C_3}{3}  \el \|f^{-N - 1/2} u'\|_1^2\Bigr].
\end{multline}
Going back to the operator ${\mathcal P}$, we have
\begin{multline*}
-2 \im \la  \fN {\mathcal P} u, Mu \ra = 2 \psi \im  \la \fN {\mathcal
  P}u, (u'' + \fr{1}{3} \ep^{-2/3} \ae u) \ra + \frac{2}{3}\el \im
\la \fN {\mathcal P}u, \aee u\ra 
\\[10pt]
 \leq 7\el\|f^{-N} {\mathcal P} u\|_0^2  + \frac{\el}{6t^2}
\Bigl(\|f^{-N} (u'' + \fr{1}{3} \ep^{-2/3} \ae u)\|_0^2 + \fr{2}{3}\|
f^{-N} t \aee u\|_0^2\Bigr).
\end{multline*}
Therefore, exploiting (6.6), we obtain
\begin{multline} 
\label{eq:6.15} 
7\el\|f^{-N} {\mathcal P} u\|^2 \geq \pa_t {\mathscr E}_N(u)  + \frac{2N}{3} {\mathscr E}_{N+ 1/2}(u)  + 2\lambda {\mathscr E}_N (u) 
+ {\mathscr R} 
\\[10pt]
- \fr{\el}{6 t^2}\Bigl[\fr{1}{3}\|f^{-N}( u'' + \fr{1}{3} \ep^{-2/3}
\ae u)\|_0^2+ \frac{2}{3}\el\|f^{-N} \aee u\|_0^2 \Bigr]
\\[10pt]
= \pa_t {\mathscr E}_N(u)  + \frac{2N}{3} {\mathscr E}_{N+ 1/2}(u)  +
2\lambda {\mathscr E}_N(u)  +  {\mathscr Q}_1 + \sum_{j =
  3}^{10}{\mathscr R}_j + \sum_{j = \nu}^3 B_{\nu} +  \text{lower
  order terms}. 
\end{multline}
Here
\begin{multline*}
{\mathscr Q}_1 = \frac{\el}{ t^2} \Bigl(\frac{1}{3} \| f^{-N}
u''\|_0^{2}  +  \ep^{-4/3} \frac{4}{27} \|f^{-N} \ae u\|_0^2 +
\fr{5}{18}\| f^{-N} (u'' + \fr{1}{3}\ep^{-2/3} \ae u)\|_0^2
\\[10pt]
+ \fr{2}{3} \ep^{-2/3}\la f^{-N} \ae  u', f^{-N} u'\ra - \fr{4}{9}
\|f^{-N} t \aee u\|_0^2\Bigr).
\end{multline*}
Finally, taking into account (\ref{eq:6.4}), (\ref{eq:6.14}),
(\ref{eq:6.15}), we obtain the energy estimate 
\begin{align}
\label{eq:final}
7 \el \| f^{-N} \mathcal{P}u \|_0^{2} 
&\geq 
\partial_{t} \bigg( \mathscr{E}_{N} (u) +  \frac{N}{27} \psi \|
f^{-N-1}  u' \|_0^{2} + \frac{N}{27}  \psi \| f^{-N-2} u
\|_0^{2}\notag \\ 
&\phantom{\geq}  
+ \frac{N}{27} C_3  \el  \| f^{-N-1} u \|^{2}_{1} \bigg)\notag \\
&\phantom{\geq}
+ 2\lambda {\mathscr E}_{N}(u) + \ep^{1/3}t \el \frac{4N}{3} \im \la
f^{2N -1} b_3 u, u'\ra + \ep^{2/3} t A_{N + 1/2}^{(2)}(u)\notag \\
&\phantom{\geq}
+ N \psi \Bigl[\frac{5}{27} \|f^{-N-1/2}u''\|_0^2 + \frac{10}{27} \re
\la f^{-N - 1/2} ( t\a0 + \ep^{-2/3} \ae) u' , f^{-N-1/2} u'\ra \notag
\\ 
&\phantom{\geq}
+\frac{4}{9}  \|f^{-N - 1/2}(u'' + \frac{1}{2} (t \a0 + \ep^{-2/3} \ae)u)\|_0^2 + \frac{1}{9} \|f^{-N - 1/2} (t \a0 + \ep^{-2/3} \ae) u\|_0^2  \Bigr]
\notag \\
&\phantom{\geq}
+\frac{N}{9} \Bigl[-\frac{\psi'}{3} \|f^{-N - 1} u'\|_0^2 + \frac{2N
  -5}{9} \psi\|f^{-N - 3/2} u'\|_0^2  - \frac{\psi'}{3}\|f^{-N - 2}
u\|_0^2 \notag \\
&\phantom{\geq}
+ \frac{2N + 1}{9}\psi  \|f^{-N-5/2} u\|_0^2 + \frac{2C_3}{3} \lambda
\el \|f^{-N - 1}u\|_1^2 \notag \\
&\phantom{\geq}
+ \frac{(2N + 1)C_3}{9}\el\|f^{-N - 3/2} u\|_1^2 
+ \frac{ C_3}{3}  \el \|f^{-N - 1/2} u'\|_1^2\Bigr]
\notag \\
&\phantom{\geq}
+ {\mathcal Q}_1 +  \sum_{j=3}^{10} {\mathscr R}_{j} + \sum_{\nu = 1}^3 B_{\nu}
+ \text{lower order terms}.  
\end{align}

\section{Estimate of the error terms in the energy inequality}
%
%
\setcounter{equation}{0}
\setcounter{thm}{0}
\setcounter{prop}{0}  
\setcounter{lem}{0}
\setcounter{cor}{0} 
\setcounter{defn}{0}
\setcounter{rem}{0}
%

The last line of (\ref{eq:final}) contains a number of terms grouping the ``errors'' 
that must be dominated with the other positive terms.  We point out
that  the $ B_{\nu} $ are
just those ``errors'' associated with the lower order terms containing pure second
order $ x $-derivatives and did not play any
role up to now.
It is convenient to write 
$a_2^{\ep}(t, x, \xi) = a_2^{\ep} (0, x, \xi) + t\ep^{2/3}
\tilde{a}_2^{\ep} (t, x, \xi)$ and to replace the operator
$a_2^{\ep}(t, x, D_x)$ by the operator $\a0$ with symbol $a_2^{\ep}(0,
x, \xi).$ This will add a few terms similar to $t \ep^{2/3}
A^{(2)}_{N}(u)$ . Consequently, we have to deal with lower order terms which can be
treated choosing $\ep$ small,.

\bigskip

For the analysis of the terms $B_{\nu},\: \nu = 1,2, 3$ we apply a similar
procedure. First we write $b_2^{\ep}(t, x, \xi) = b_2^{\ep}(0, x, \xi)
+ t \ep^{2/3} \tilde{b}_2^{\ep} (t, x, \xi).$ The terms with the factor
$t\ep^{2/3}$ are similar to $ t \ep^{2/3} A^{(2)}_N(u)$ and can be
treated choosing $\ep$ small. We will analyze these terms in subsection 7.6. To keep the notation simple, we denote by $\b0$
the operator with symbol $b_2^{\ep}(0, x, \xi).$ The modified terms
$B_{\nu}$ will be denoted by $\tilde{B}_{\nu}, \: \nu = 1, 2, 3.$ 

\subsection{Estimate of $\|\b0 f^{-N + 1/2} u\|_0$}

 The subprincipal symbol of the operator ${\mathcal P}$ for $t= 0$ and $\tau = 0$ has the form
$$p_2'(0, x_0, \xi_0) = - \frac{i}{2} a_2^{\ep}(0, x_0, \xi_0) + b_2^{\ep}(0, x_0, \xi_0).$$
If we have a triple point $\rho = (0, x_0, \xi)$ for the  symbol $p_3(t, x, \tau, \xi)$, then $t = \tau = 0.$ 
Thus
$$ b_2^{\ep}(0, x, \xi) = \Bigl[ \frac{1}{2} i + \frac{p_2'(0, x, \xi)}{a_2^{\ep} (0, x, \xi)}\Bigr]a_2^{\ep}(0, x, \xi).$$
Let us introduce the number 
$$
\Pi = \frac{2}{3} + \max_{x \in \bar{U}_{x_0},\: \alpha(x, \xi) = 0, \: |\xi| = 1}
\Bigl|\frac{p_2'(0, x, \xi)}{a_2^{\ep}(0, x, \xi)}\Bigr|.
$$
which correspond to (1.7).
Here $U_{x_0} \subset \R^n$ is the open set defined in the hypothesis $(H_2)$.  Notice that we could have only one point $y \in U_{x_0}$ such that $\alpha(y, \xi) = 0.$
It is clear that for $V_{x_0} \Subset U_{x_0}$ sufficiently small we have
$$\sup_{x \in \bar{V}_{x_0},\: |\xi| = 1} \Bigl| \frac{1}{2}i  + \frac{p_2'}{a_2^{\ep}}(0, x,
\xi)\Bigr| \leq \Pi.$$
In the following we can assume that $u(t, x)$ has a  support with respect to $x$ included in $V_{x_0}$.
Let $\chi \in C^{\infty} (\R^n)$ be a function such that $0 \leq \chi(x) \leq 1,\:\chi(x) = 0$ for $|x| \leq 1$ and $\chi(x) = 1$ for $|x| \geq 2.$ We write
$$a_2^{\ep}(0, x, \xi)= \chi(\xi \delta)a_2^{\ep}(0, x, \xi) + (1- \chi(\xi \delta)) a_2^{\ep}(0, x, \xi)$$
with $\delta > 0.$
The operator with symbol $(1- \chi(\xi \delta))a_2^{\ep}(0, x, \xi)$
is smoothing and the analysis of the corresponding term is covered by
using the argument for lower order terms. On the other hand, the norm
in ${\mathcal L}(L^2(V_{x_0}))$ of the zero order operator
\begin{equation}
\label{eq:subcut}
\Bigl[ \frac{1}{2}i  + \frac{p_2'}{a_2^{\ep}}(0, x,
D_x)\Bigr]\chi(D_x \delta)
\end{equation}
is not greater than $\Pi$ if $\delta$ is chosen small enough depending
on the symbols $a_2$ and $p_2'$ (see Theorem 18.1.15 in
H\"ormander, \cite{h3}.) \\

Thus we have
$$\| \b0 f^{-N+ 1/2}u \|_0 \leq \big \|\Bigl [\frac{1}{2}i + \frac{p_2'}{a_2^{\ep}}(0, x, D_x)\Bigr] a_2^{\ep}(0, x, D_x) f^{-N + 1/2} u\big \|_0 + \|R_0\|_0$$
$$\leq \Pi \|\a0 f^{-N + 1/2} u\|_0 + \|R_0\|_0,$$
where $R_0$ is a lower order term including $\alpha_1 f^{-N + 1/2}$
with some first order pseudodifferential operator $\alpha_1$.

\bigskip

We are going to study the term
\begin{equation}
\label{eq:-N+undemi}
- \|\a0 f^{-N + 1/2} u\|_0^2 = -
 \Bigl[\re \la (a_2^{\ep})^2(0, x, D_x)
f^{-N + 1/2}u, f^{-N + 1/2}u\ra  +  |F_1|\Bigr],
\end{equation}
where $|F_1| \leq C_6 \|f^{-N +1/2}u\|_{3/2}^2$ and $ (a_2^{\ep})^2(0,
x, D_x) $ means $ \op\left( (a_{2}^{\epsilon})^{2}\right) $. 
We have the inequality
\begin{equation} \label{eq:7.3}
(a_2^{\ep})^2 \leq t (a_2^{\ep})^2 f^{-1} + a_2^{\ep} \alpha_0
f^{-3} \leq t^2 f^{-1} (a_2^{\ep})^2 f^{-1} + 2 t
f^{-2}a_2^{\ep}\alpha_{0}f^{-2} + f^{-3}\alpha_0^2 f^{-3},
\end{equation}
where $\alpha_0(x, \xi) = \frac{a_2^{\ep}(0, x, \xi)}{\la \xi \ra^2}.$
We can apply the sharp G\aa rding inequality and we estimate
$$
-\re \la (a_2^{\ep})^2 f^{-N + 1/2}u, f^{-N + 1/2} u \ra - |F_1| \geq
- t^2 \re \la(a_2^{\ep})^2 f^{-N - 1/2}u, f^{-N - 1/2}u\ra
$$
$$
- 2 t \re \la \alpha_{0} \a0 f^{-N - 3/2}u, f^{-N -3/2} u\ra - A_3^2 \|f^{-N -
  5/2} u\|_0^2 - Y_1 \|f^{-N + 1/2}u\|_{3/2}^2
$$
$$ 
\geq -t^2 \|\a0f^{-N - 1/2}u\|_0^2 - 2 t \re \la \alpha_{0} \a0 f^{-N - 3/2}u,
f^{-N -3/2} u\ra - A_3^2 \|f^{-N - 5/2} u\|_0^2
$$
$$
-Y_2 \|f^{-N +1/2}u\|_{3/2}^2 = \sum_{j=1}^4\Gamma_j.
$$
Here we have used the fact that
$$t^2 \re \la (a_2^{\ep})^2 f^{-N-1/2}u, f^{-N-1/2}u\ra =t^2 \|\a0 f^{-N-1/2}u\|_0^2 + F_2,$$
where $|F_2| \leq A_4 t^2 \|f^{-N- 1/2}u\|_{3/2}^2.$ Since $t^2 f^{-2}
\leq 9$, we included the term $F_2$ in the above sum taking $Y_2 \geq
Y_1.$ Notice that $Y_1, Y_2$ depend only on the symbol $a_2^{\ep}(0,
x, \xi).$

\bigskip

It is convenient to transform the term $\Gamma_4$. For this purpose we
use the inequality
$$
\la \xi \ra^3 f^{-2N + 1} \leq t \la \xi \ra ^3 f^{- 2N} + \la \xi
\ra f^{-2N - 2}\leq t^2 \la \xi \ra ^3 f^{-2N - 1} + t \la \xi \ra
f^{-2N - 3} + \la \xi \ra^2 f^{-2N - 2}
$$
$$
\leq \delta_1 t^2 \la \xi \ra^4 f^{-2N -1}+ D_{\delta_1} t^2 f^{-2N
  - 1} + 2t \la \xi \ra ^2 f^{-2N - 3} +   f^{-2N - 5}.
$$ 
Thus
$$\|f^{-N + 1/2} u\|_{3/2}^2 \leq \delta_1 t^2 \|f^{-N-1/2}u\|_2^2 + 2t \|f^{-N - 3/2}u\|_1^2 + \|f^{-N - 5/2}u\|_0^2 + D_{\delta_1} t^2 \|f^{-N - 1/2}u\|_0^2$$
$$\leq \delta_1 C^2 t^2\|\a0 f^{-N - 1/2} u\|_0^2 +2 t \|f^{-N - 3/2}u\|_1^2 + \|f^{-N - 5/2}u\|_0^2 + D'_{\delta_1} t^2 \|f^{-N - 1/2}u\|_0^2.$$
We take $\delta_1 > 0$ small enough so that $\delta_1 C^2 Y_2 \leq
\delta $ and we couple the term with the factor $t^2$  with
that also involving $t^2$ in $\Gamma_1$.  Next we fix $\delta_1$ and for small $t$ we have
$D'_{\delta_1} Y_2 t^2 \|f^{-N -1/2} u\|_0^2 \leq \|f^{-N - 5/2}
u\|_0^2$. Notice that we can choose $\delta > 0$ as small as we wish. We sum this term with $\Gamma_3$.  Consequently, we get 
\begin{eqnarray} 
\label{eq:7.4}
\sum_{j=1}^4 \Gamma_4 & \geq & - (1 + \delta) t^2 \|\a0 f^{-N-
  1/2}u\|_0^2-  2t (C_{a_2}+ Y_2 )\|f^{-N - 3/2}u\|_1^2\nonumber \\
 & &- (A_3^2+ 2Y_2)\|f^{-N - 5/2} u\|_0^2.
\end{eqnarray}
Here we have used that
$$\re \la \a0 f^{-N -3/2}u, f^{-N- 3/2}u\ra \leq C_{a_2} \|f^{-N- 3/2}u\|_1^2$$
and the constant $C_{a_2}$ depends on $a_2^{\ep}(0, x, \xi),$ while
$A_3 = \|\alpha_0(x, D_x)\|_{L^2(U) \to L^2(U)}$.

\bigskip
Summarizing we obtain the following
\begin{lem} 
Let $0\leq t_0 \leq t \leq T$ and let $D_t^k u(t_0, x) = 0,\:x \in V_{x_0},\: k = 0, 1, 2.$ For every fixed
small number $\delta > 0$ there exist constants $C_{a_2},\: Y_2,\:
A_3$ such that modulo lower order term $\psi R_0$  we have 
\begin{multline} 
\label{eq:7.5}
\psi \|\a0 f^{-N + 1/2} u \|_0^2 \leq  (1 + \delta) t\el \|\a0 f^{-N-
  1/2}u\|_0^2 +  2 \el (C_{a_2}+ Y_2 )\|f^{-N - 3/2}u\|_1^2  \\
 + (A_3^2+ 2Y_2)\psi\|f^{-N - 5/2} u\|_0^2.
\end{multline}
\end{lem}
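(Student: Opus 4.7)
The plan is to assemble the estimate from the operator inequality (7.3) together with two mechanisms: the sharp Gårding inequality to pass from symbol inequalities to operator inequalities, and iterated applications of Lemma 6.1 to shuffle powers of $\langle\xi\rangle$ into powers of $t$ and of $f$. First I would write
$$\|\a0 f^{-N+1/2} u\|_0^2 = \re \la (a_2^{\ep})^2(0,x,D_x) f^{-N+1/2}u, f^{-N+1/2}u\ra + F_1,$$
where the composition error $F_1$ satisfies $|F_1| \leq C_6 \|f^{-N+1/2}u\|_{3/2}^2$ thanks to Proposition 3.2 (the remainder in $\a0 \# \a0$ belongs to $S(\langle\xi\rangle^3, g^{\ep})$).

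Next I would apply the symbol inequality (7.3), which is just Lemma 6.1 iterated on the non-negative symbol $(a_2^{\ep})^2(0,x,\xi)$ with $\alpha_0 = a_2^{\ep}/\langle\xi\rangle^2 \geq 0$. Since the difference of the two sides of (7.3) is non-negative (modulo a symbol of order 3), the sharp Gårding inequality upgrades it to an operator inequality applied between $f^{-N+1/2}u$ and itself; this yields
$$\re \la (a_2^{\ep})^2 f^{-N+1/2}u, f^{-N+1/2}u\ra \leq t^2 \|\a0 f^{-N-1/2}u\|_0^2 + 2 t\, C_{a_2} \|f^{-N-3/2}u\|_1^2 + A_3^2 \|f^{-N-5/2}u\|_0^2 + |F_2|,$$
where $F_2$ is the composition error in rewriting $\re\la (a_2^{\ep})^2 f^{-N-1/2}u, f^{-N-1/2}u\ra$ as $\|\a0 f^{-N-1/2}u\|_0^2$ and $|F_2| \leq A_4 t^2 \|f^{-N-1/2}u\|_{3/2}^2$. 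Here $C_{a_2}$ depends only on $a_2^{\ep}(0,x,\xi)$ and $A_3 = \|\alpha_0(x,D_x)\|_{L^2 \to L^2}$.

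The next step is to absorb the $\|f^{-N+1/2}u\|_{3/2}^2$ and $t^2 \|f^{-N-1/2}u\|_{3/2}^2$ contributions coming from $F_1$ and $F_2$. For this I use the chain of symbol inequalities
$$\langle\xi\rangle^3 f^{-2N+1} \leq \delta_1 t^2 \langle\xi\rangle^4 f^{-2N-1} + D_{\delta_1} t^2 f^{-2N-1} + 2t \langle\xi\rangle^2 f^{-2N-3} + f^{-2N-5},$$
obtained by applying Lemma 6.1 twice and estimating $t\langle\xi\rangle^3 f^{-2N}$ by Cauchy--Schwarz against $\delta_1 t^2 \langle\xi\rangle^4 f^{-2N-1}$ plus the $D_{\delta_1}$ remainder. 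By the uniform ellipticity of $\a0$ (Proposition 6.1 applied in $H^2$), one has $\|f^{-N-1/2}u\|_2 \leq C \|\a0 f^{-N-1/2}u\|_0 + \text{l.o.t.}$, so the first term becomes $\delta_1 C^2 Y_2 t^2 \|\a0 f^{-N-1/2}u\|_0^2$. Choosing $\delta_1$ so that $\delta_1 C^2 Y_2 \leq \delta$ couples this with $\Gamma_1$, and for $T$ sufficiently small the $D_{\delta_1}' Y_2 t^2 \|f^{-N-1/2}u\|_0^2$ term is dominated by $\|f^{-N-5/2}u\|_0^2$, which is absorbed into the $A_3^2$-coefficient term.

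Collecting everything and multiplying by $\psi = \el/t$ (so that $t\psi = \el$ and $t^2\psi = t\el$) yields the claimed inequality. The main obstacle is the careful bookkeeping of composition-error remainders: one must verify at each step that those remainders either fit inside the promised lower-order term $\psi R_0$ (which includes contributions like $\alpha_1 f^{-N+1/2}u$ for first-order $\alpha_1$) or can be bounded by an $\|f^{-N+1/2}u\|_{3/2}^2$-term that is itself processed through the Lemma 6.1 inequality above. Handling the constants uniformly in $N$ (the weight powers) is also delicate but is guaranteed by Corollary 4.1 provided $0 < \ep \leq \ep_0(N)$.
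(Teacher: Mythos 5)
Your proposal reproduces the paper's own proof essentially line for line: the same decomposition $\|\a0 f^{-N+1/2}u\|_0^2 = \re\la(a_2^\ep)^2 f^{-N+1/2}u, f^{-N+1/2}u\ra + F_1$, the same symbol inequality (7.3) lifted to an operator inequality via sharp G\aa rding, the same iterated use of Lemma 6.1 to trade the $\|f^{-N+1/2}u\|_{3/2}^2$ error for the three terms appearing in the lemma, the same $\delta_1$-choice with $\delta_1 C^2 Y_2 \leq \delta$, and the final multiplication by $\psi$. The only slip is the citation ``Corollary 4.1'': the commutator estimate governing the $N$-uniformity (through $\ep\leq\ep_0(N)$) is Corollary 5.1 in this paper.
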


We turn to the analysis of the term
\begin{align*}
K_{N+ 1/2} &=  \ep^{-2/3}\el 2\re \la f^{-2N-1} \aee u, \ae u \ra \\ 
&= 
\ep^{-2/3}\el 2\Bigl[ \re \la \aee(f^{-N - 1/2} u), \ae(f^{-N - 1/2}
u)\ra +   \re \la \alpha_1 \fNt u, \fNt \ae u\ra  \\
&\phantom{=}
+  \re \la \aee \fNt u, \beta_1 \fNt u \ra \Bigr]
\end{align*}
for some first order operators $\alpha_1, \beta_1$. First, notice
that for the principal symbol of $\ae$ we have  $\ae = \ep^2 \aez$
with a second order 
non-negative symbol $\aez$ and that the operator $\aez(x, D_x)$ is
self-adjoint. 
We replace $\ae$ by $\ep^2 \aez$ in the above equality and we
obtain a small factor $ \ep^{1/3}$. In fact for the two terms on the
right hand side we have factor $\ep^{4/3}$, while for the last one we
have $\ep^{1/3}$ related to the commutator $[\ae, f^{-N - 1/2}].$  For
the term $\re \la \aez \aee \fNt u, \fNt u \ra$ we 
can apply the sharp G\aa rding inequality since the principal symbol
of $\aez \aee$ is non-negative. The other terms in 
$K_{N+1/2}$ involve third order operators and we get
$$
K_{N + 1/2} \geq - C \ep^{1/3} \el\|\fNt u \|_{3/2}^2.
$$
An application of Lemma 6.1 yields
$$
\la \xi \ra ^3 f^{-2N - 1} \leq t \la \xi \ra^{3} f^{-2N - 2} + \la
\xi \ra f^{-2N - 4}  
\leq c_1 \la \xi\ra^{-1/3} \Bigl[t \la \xi \ra^4 f^{-2N - 1} + \la \xi
\ra ^2 f^{-2N - 3}\Bigr]
$$
since $\la \xi \ra ^{-2/3} f^{-1} \leq c.$ Thus
$$
K_{N + 1/2} \geq - C_1 \ep^{1/3}\el\Bigl(t \|\fNt u\|_2^2 + \|f^{-N -
  3/2} u\|_1^2\Bigr).
$$
On the other hand,
$$
\el\|f^{-N - 1/2 } u\|_2^2 \leq C_5 \el \|\a0 \fNt u\|_0^2 + C_6 \el
\|\fNt u\|_1^2
$$
and we deduce
$$
K_{N + 1/2} \geq - C_7 \ep^{1/3} \el \Bigl[t \|\a0 \fNt u \|_0^2 +
t\|\fNt u \|_1^2 + \|f^{-N - 3/2} u \|_1^2\Bigr].
$$
Combining this with (\ref{eq:7.5}), we deduce for small $\ep$ the estimate
\begin{eqnarray} 
\label{eq:7.6}
\psi \|\a0 f^{-N + 1/2} u \|_0^2 &\leq & (1 + \delta) \psi \|(t \a0  + \ep^{-2/3} \ae)f^{-N-
  1/2}u\|_0^2\nonumber \\
&  &
+  2 \el (C_{a_2}+ Y_2+ C_8 \ep^{1/3} )\|f^{-N - 3/2}u\|_1^2 
\nonumber \\
&  &
 + (A_3^2+ 2Y_2)\psi\|f^{-N - 5/2} u\|_0^2.
\end{eqnarray}

\subsection{Estimate of the sum $\sum_{\nu = 1}^3 \tilde{B}_{\nu}$}

We have
\begin{align*}
\sum_{\nu = 1}^3 \tilde{B}_{\nu} &= 2\im \psi \la f^{-N} \b0 u, f^{-N}
\Bigl(u'' + \frac{1}{3} (t \a0 + \ep^{-2/3} \ae)u\Bigr)\ra
\\
&= 
2\im \psi \la f^{-N} \b0 u, f^{-N} \Bigl(u'' + \frac{1}{2} (t \a0 +
\ep^{-2/3} \ae)u\Bigr)\ra \\
&\phantom{=}
- 2 \im \psi \la f^{-N} \b0 u, \frac{1}{6}(t \a0 +
\ep^{-2/3} \ae)u\ra \\
&= 
Z_1 + Z_2.
\end{align*}
Taking into account (\ref{eq:7.6}), we obtain
\begin{align*}
|Z_1| 
&\leq \eta \Pi \psi \|\a0 f^{-N + 1/2} u\|_0^2 + \frac{1}{\eta}\Pi
\psi \|f^{-N - 1/2} (u'' + \frac{1}{2}(t \a0 + \ep^{-2/3} \ae)u)
\|_0^2 + |R_1| \\
&\leq (1 + \delta) \eta \Pi \psi \|(t \a0  + \ep^{-2/3} \ae)f^{-N-
  1/2}u\|_0^2
+  2 \Pi \eta \el (C_{a_2}+ Y_2+ C_8 \ep^{4/3} )\|f^{-N - 3/2}u\|_1^2
\\
&\phantom{\leq}
 + \eta (A_3^2+ 2Y_2)\Pi\psi\|f^{-N - 5/2} u\|_0^2 + \frac{1}{\eta}\Pi \psi
 \|f^{-N - 1/2} (u'' + \frac{1}{2}(t \a0 + \ep^{-2/3} \ae)u) \|_0^2 +
 |R_1|
\end{align*}
with $\eta > 0$ which  will be chosen below.
Here and below we note by $R_j, j = 0, 1, 2,...$ lower order terms
which include first order operators. The analysis of these terms  will
be considered in the next subsection.

On the other hand, for $Z_2$, we get
\begin{align*}
|Z_2| &\leq \frac{1}{3} \psi|\im \la f^{-N} \b0  u, f^{-N}(t \a0 +
\ep^{-2/3} \ae) u\ra |
\\
&\leq \frac{1}{3}\psi \|\b0 f^{-N + 1/2} u\|_0 \|(t \a0 +
\ep^{-2/3}\ae) f^{-N - 1/2} u\|_0 + |R_2|
\\
&\leq \frac{1}{3} \Pi \psi \|\a0 f^{-N + 1/2} u\|_0 \|(t \a0 +
\ep^{-2/3} \ae) f^{-N - 1/2} u\|_0 + |R_2|.
\end{align*}
According to (\ref{eq:7.6}), we have
$$
\frac{1}{3} \Pi \psi \|\a0 f^{-N + 1/2} u\|_0 \|(t \a0 + \ep^{-2/3}
\ae) f^{-N - 1/2} u\|_0 
$$
$$ 
\leq \frac{1}{3} \Pi \psi \Bigl[\sqrt{ 1 + \delta} \|(t \a0 + \ep^{-
  2/3} \ae) f^{-N - 1/2} u\|_0 + \sqrt{2(C_{a_2} + Y_3)}
\sqrt{t}\|f^{-N - 3/2} u\|_1   $$
$$
+\sqrt{A_3^2 + 2 Y_2} \|f^{-N - 5/2} u\|_0\Bigr] \|(t \a0 + \ep^{-2/3}
\ae) \fNt u\|_0
$$
$$
\leq \frac{1}{3} \Pi \psi \Bigl( \sqrt{1 + \delta} + 2 \delta_1\Bigr)
\|(t \a0 + \ep^{-2/3} \ae)\fNt u\|_0^2 + \frac{2}{3} \Pi
\delta_1^{-1}(C_{a_2} + Y_3) \el\|f^{-N - 3/2} u\|_1^2
$$
$$ + \delta_1^{-1} 2(A_3^2 + 2 Y_2)/3\Pi \psi \|f^{-N - 5/2} u\|_0^2
$$
with some constant $Y_3$ and $\delta_1 > 0$ such that $\sqrt{1 + \delta} + 2 \delta_1 = 1 + \delta.$
To absorb the leading terms in the sum $\sum_{\nu = 1}^3
\tilde{B}_{\nu}$, the inequalities 
$$
\frac{1}{4 \eta} \Pi \leq \frac{N} {9}, \qquad   (\frac{1}{3} + \eta) (1 +
\delta) \Pi \leq \frac{N}{9}.
$$
must be satisfied to compensate for the terms
$$
\frac{1}{\eta}\Pi \psi \|\fNt (u'' + \frac{1}{2} (t \a0 + \ep^{-2/3}
\ae)u\|_0^2
\quad \text{ and } \quad
(\frac{1}{3} + \eta)(1+ \delta) \Pi \psi \| (t \a0 + \ep^{-2/3} \ae)
\fNt u\|_0^2.
$$
In order to optimize the choice of $\eta$, we take $\eta =
\frac{\sqrt{10} - 1}{6}.$ Then for $N \geq \frac{13}{2} \Pi$ and small
$\delta$ we satisfy the above inequalities. For example, for the first
one we have
$$
\frac{3}{ \sqrt{10} - 1} \leq \frac{13}{9} \Leftrightarrow 27 < 13
\sqrt{10} - 13 \Leftrightarrow 40 < 13 \sqrt{10},
$$
while the second one we get
$$
\frac{\sqrt{10} +1}{6}( 1 + \delta)\leq \frac{13}{18} \Leftrightarrow
3(\sqrt{10} +1) (1 + \delta) \leq 13 \Leftrightarrow 3 \sqrt{10} +
{\mathcal O} (\delta) \leq 10.
$$
Now we fix $\delta$ and so the constants $A_3, Y_2, Y_3$ are fixed. To
absorb $-\frac{2}{3} \delta_1^{-1}(A_3^2+ 2Y_2)\Pi\psi\|f^{-N - 5/2}
u\|_0^2$, 
we exploit the corresponding term in (\ref{eq:final}) and arrange
things so that
$$
\frac{2}{3}\delta_1^{-1}(A_3^2 + 2 Y_2) \Pi \leq \frac{4}{3}
\delta_1^{-1}(A_3^2 + 2 Y_2) \frac{ N}{13} \leq \frac{N(2N+1)}{81},
$$
that is $\frac{27.4}{13} \delta_1^{-1}(A_3^2 + 2 Y_2) \leq 2N + 1.$
Since $N = \frac{13}{2} \Pi + N_0$, we can do this choosing $N_0$
large. In the same way, we arrange the inequality 
$$
\frac{4}{11}\delta_1^{-1} (C_{a_2} + Y_2) \leq \frac{(2N
  +1)C_3}{27}
$$ 
and we absorb the term $- \frac{4}{3}\delta_1^{-1}(C_{a_2}+ Y_3
)\Pi\el\|f^{-N - 3/2}u\|_1^2$ by the corresponding term in
(\ref{eq:final}). 

\subsection{Analysis of $R_j, j= 0,1,2$}
We will prove that we can absorb the terms $R_j$ choosing $\lambda $
large enough.

The term $R_1$ has the form $2 \psi \im \la \alpha_1 f^{-N} u, f^{-N}
(u'' + \fr{1}{3}(t \a0 + \ep^{-2/3} \ae)u)\ra$ with a first order
pseudodifferential operator $\alpha_1$. We have 
$$
R_1 \geq  -\mu \frac{ \el}{t^2} \|f^{-N} (u'' + \fr{1}{3}(t \a0 +
\ep^{-2/3} \ae)u)\|_0^2 - \frac{D_1}{\mu} \el \|f^{-N} u\|_1^2
$$
$$ 
\geq -2 \mu \frac{\el}{t^2}\Bigl( \|f^{-N} (u'' + \fr{1}{3}\ep^{-2/3}
\ae u)\|_0^2  + \fr{1}{9} \|f^{-N}t \a0 u\|_0^2\Bigr) -
\frac{D_1}{\mu} \el \|f^{-N} u\|_1^2
$$
The term involving $\|f^{-N} (u'' + \fr{1}{3} \ep^{-2/3} \ae u)\|_0^2$
is absorbed by the corresponding term in ${\mathcal Q}_1$ choosing
$\mu > 0$ small, the term $- \fr{2}{9} \mu \|f^{-N} t\a0 u\|_0^2$ is
added to the term ${\mathcal Q}_1$ which will be estimated
below. 
Finally,  the last term is absorbed by $\frac{2C_3}{27}\lambda
\el \|f^{-N - 1} u\|_1^2$ in (\ref{eq:final}) taking $\lambda $
large. 
For $R_2$ we use the inequality
$$
R_2 \geq -\delta_3 \frac{\el}{t^2} \|f^{-N} (t \a0 + \ep^{-2/3}\ae)
u\|_0^2 - {\delta_3}^{-1} D_2 \el \|f^{-N} u\|_1^2
$$
$$
\geq - 2\delta_3 \frac{\el}{t^2} \Bigl[t^2 \|f^{-N} \a0 u\|_0^2 +
\ep^{-4/3} \| f^{-N} \ae u\|_0^2\Bigr] - \delta_3^{-1} D_2 \el
\|f^{-N} u\|_1^2.
$$
Taking $\delta_3$ small, we add the first term on the right hand side
to the term ${\mathcal Q}_1$ which we will estimate below. Also the
term with $\ep^{-4/3} \|f^{-N} \ae u\|_0^2$ can be absorbed by
${\mathcal Q}_1$ choosing $\delta_3$ small. The third term on the
right is handled as above. The term $R_0$ is easy to be treated since
we have $f^{-N + 1/2} u$ instead of $f^{-N} u$ and we may repeat the
argument applied for $R_2$. Notice that we choose $\lambda$ large
depending on the norms of first order operators but we keep the
dependence of $N$ on $\Pi$. 

\subsection{Analysis of $t \ep^{2/3} A^{(2)}_{N + 1/2} (u)$}

The term $ 2N t \ep^{2/3} A^{(2)}_{N + 1/2}(u)$ is a sum of
terms. They can be estimated following our previous arguments but we
can take advantage of the factor $t \ep^{2/3}.$ Consider a
typical term:
$$
L_4  = 2N  \ep^{2/3}t \el\re \la f^{-2N - 1} \tilde{a}_2^{\ep} u, u''\ra.
$$
We have modulo lower order terms
$$
|L_4| \leq N \ep^{2/3}\el \Bigl( \|f^{-N - 1/2} u'' \|_0^2 + t^2 C_{a_2}
\|f^{-N- 1/2} u\|_2^2\Bigr).
$$
We absorb this term involving $u''$ by (\ref{eq:final}) taking $\ep$ small and using the term
$N \psi \fr{5}{27} \|\fNt u''\|_0^2$ with small $t$. For the other term we take $\ep$ small to arrange  $\ep C_{a_2} < 1 $ and we apply Proposition 6.1 to reduce the analysis to an estimate of $t^2 C \|\a0 \fNt u\|_0^2$ where we have a factor $t^2$ and the term  can be handled by term in (\ref{eq:final}) involving $t \|\a0 \fNt u\|_0^2.$\\

Next consider the term
$$
L_3 = 2 N \ep^{2/3} t \re \la f^{-2N - 1} \tilde{a}_2^{\ep} u',
u'\ra.
$$
Modulo lower order terms we have
$$
|L_3| \leq N t \ep^{2/3} C_{a_2} \|f^{-N - 1/2} u'\|_1^2
$$
and this can be absorbed by the corresponding terms in (\ref{eq:final}) taking $\ep$ and $t$ small.

\subsection{ Analysis of ${\mathcal Q}_1, {\mathscr R}_3$}

In  ${\mathcal Q}_1$ we have only one negative term. We use the
inequality 
\begin{multline*}
\el\|f^{-N} \a0 u\|_0^2 = \el \la f^{-N + 1/2} \a0 u, f^{- N - 1/2}
\a0 u\ra 
\\
\leq \psi \| \a0 f^{-N + 1/2} u\|_0^2 + \el t \|\a0 f^{-N -
  1/2} u\|_0^2 + R_3.
\end{multline*}
For the first term on the right hand side we apply Lemma 7.1 and we
obtain a leading term $D_3t \|\a0 f^{-N - 1/2} u\|_0^2$ which can be
absorbed taking $N_0$ large. The other terms can be treated as above
exploiting the corresponding terms in (\ref{eq:final}). 
Passing to the term ${\mathscr R}_3$, we get
\begin{align*}
|{\mathscr R}_3 | &\leq \frac{2}{3} \ep^{-2/3} \el \|f^{-N}
(a_2^{\ep})_t u\|_0  \|f^{-N}\ae u \|_0 \\
&\leq \frac{1}{3} \ep^{2/3} \Bigl( \el t\|f^{-N} \pa_t (a_2) u\|_0^2 +
\ep^{-4/3} \frac{\el}{t^2}\|f^{-N} \ae u\|_0^2\Bigr).
\end{align*}
Here the factor $\ep^{2/3}$ comes from the derivative with respect to
$t$ of $a_2(\ep^{2/3} t, \ep x, \xi).$ For small $\ep$ we may absorb
the term involving $\ae u$ using ${\mathcal Q}_1$, while for the other
term we write $\pa_t (a_2) = \gamma_0 a_2$ with a zero order operator
$\gamma_0$ and absorb this term with the corresponding term in $\lambda
{\mathscr E}_N(u)$ taking $\lambda$ large.

\subsection{Analysis of $-2 \ep^{1/3} \frac{4N}{3} t\el\la f^{-2N - 1}
  \be u, u'\ra$ and ${\mathscr R}_j, j = 7,8$}

It is clear that we have
\begin{eqnarray*}
 \ep^{1/3} \frac{4N}{3} t\el|\la f^{-2N - 1}  \be u, u'\ra| \\
\leq 
\ep^{1/3}\fr{4N}{3} t\el \||D_x|^{-1}\be f^{-N-1/2} u\|_0 \||D_x| \fNt u'\|_0 + |R_4|\\
\leq \ep^{1/3} N \el\Bigl[ t^2 D_3\Bigl ( \|\a0 f^{-N-1/2} u\|_0^2 + \|\fNt u\|_0^2\Bigr) 
 + \fr{4}{3}\|\fNt u'\|_1^2 \Bigr] + |R_4|.
\end{eqnarray*}
 We may absorb the term $\el t^2 D_3\|\a0 \fNt u\|_0^2$ on the
right hand side  by the corresponding terms in (\ref{eq:final}) choosing $\ep$ and $t$ small so that $\ep^{1/3} N D_3 t < 1.$
The term with $t^2 \| \fNt u\|_0^2$ is also easily absorbed. Finally, the term 
$\| \fNt u'\|_1^2$ is absorbed by $\frac{10 N}{27}\el \la\a0 \fNt u', \fNt u' \ra$ in (\ref{eq:final}) applying Proposition 6.1 and taking $\ep$ small. The rest $R_4$
involves a second order operator $\gamma_2$ in the place of $b_3$. We apply the same argument and we are going to absorb a term $\el  t^2 D_3 N\|\fNt u \|_1^2$ choosing  $t$ small or by using the term ${\mathcal O}(N^2) \el \|f^{-N - 3/2}u\|_1^2$ in (\ref{eq:final}).\\

The term ${\mathscr R}_8 = - 2 \ep^{1/3} t \el \im \la f^{-2N} (\pa_t \be) u, u'\ra$ can be treated as above. Here we do not have a coefficient $N$ to deal with and, moreover, we have the operator $f^{-2N}$ instead of $\fNt.$ We get
$$|{\mathscr R}_8| \leq \ep^{1/3}  \el\Bigl[ t^2 D_3\Bigl ( \|\a0 f^{-N} u\|_0^2 + \|f^{-N} u\|_0^2\Bigr) 
 + \|f^{-N} u'\|_1^2 \Bigr] + |R_5|.$$
We may absorb all terms taking $\lambda$ sufficiently large, $\ep$ small by using the positive terms in $\lambda E_N(u)$.\\
Passing to the term ${\mathscr R}_7$, consider first
$$\ep^{1/3} \el |\im \la f^{-2N} \be u, u'\ra | \leq \ep^{1/3} D_4 \el \Bigl[\|\a0 f^{-N+ 1/2} u\|_0^2 + \|f^{-N - 1/2} u'\|_1^2 \Bigr] + |R_6|,$$
where $R_6$ includes second order operator coming from the commutator with $b_3$.
For the term $\el \|\a0 \fNt u\|_0^2$ we apply Lemma 7.1 and we reduce the analysis to an estimate of  $\ep^{1/3}D_4 t^2(1 + \delta)\el \|\fNt u \|_0^2$ plus lower order terms. Next we absorb the leading term taking
$\ep^{1/3} D_4$ and $t$ small and using (\ref{eq:final}), where we have positive terms multiplied by $N_0$. The analysis of the other terms
follows the same argument as above. To deal with second summand in ${\mathscr R}_7$, we use the fact that $b_3^{\ep} (t, x, D_x)$ is self-adjoint. Then
$$2\im \la f^{-2N} \be u', u'\ra = -i\Bigl[ \la \be f^{-N} u', f^{-N} u'\ra - \la f^{-N} u', b_3^{\ep} f^{-N} u'\ra $$
$$+ \la \gamma_2 f^{-N} u', f^{-N} u'\ra + \la f^{-N} u', \gamma_3 f^{-N} u'\ra \Bigr]= -i \Bigl[\la \gamma_2 f^{-N} u', f^{-N} u'\ra + \la f^{-N} u', \gamma_3 f^{-N} u'\ra \Bigr]$$
with some second order operators $\gamma_2, \gamma_3$. The analysis of the terms of the right hand side is easy taking the parameter $\lambda$ large in $\lambda E_N(u)$.

\subsection{Analysis of ${\mathscr R}_j, j = 9, 10$}

For the term ${\mathscr R}_9$ we have
$$|{\mathscr R}_9| = \fr{2}{3} \ep^{1/3} t^2 \el |\im \la f^{-N}\gamma_0^{\ep} \aee u, f^{-N}\aee u\ra|\leq C \ep^{1/3} \el t^2 \|f^{-N} \aee u\|_0^2$$
 and the right hand term is easy to be absorbed by the positive terms in $\lambda E_N(u)$ taking $\lambda$ large.\\
Next we obtain
$${\mathscr R}_{10}  = \fr{2}{3} t \ep^{-1/3}\el \im \la f^{-2N} \be u, \ae u\ra = \fr{2}{3} t \ep^{-1/3}\el \Bigl[ \im \la \be f^{-N} u, \ae f^{-N}  u\ra$$ 
$$+ \im \la \gamma_2 f^{-N} u, \ae f^{-N} u \ra + \im \la f^{-N} \be u, \gamma_1 f^{-N} u \ra\Bigr]$$
with operators $\gamma_k$ of order $k = 1,2$ coming from the commutators of $b_3$ and $\ae$ with $f^{-N}.$ Since the symbol $\ae$ has coefficient $\ep^2$, for the last two terms on the right we obtain a factor $\ep^{2/3}$ and terms $C\ep^{2/3} t \|f^{-N} u\|_2^2$ which can be estimated by  $C \ep^{2/3} t\|\a0 f^{-N} u\|_0^2$ and lower order terms. The leading contribution is absorbed by $\lambda E_N(u)$ with large $\lambda$. To treat the term with $\be$ and $\ae$, we exploit the fact that these operators are self-adjoint and we reduce the analysis to an estimation of
$$t e^{-1/3} \el \la [\be, \ae] f^{-N}u ,f^{-N} u\ra.$$
From the commutator $[\be, \ae]$ we obtain a factor $\ep$ and a forth order operator. Thus we are going to estimate $C t \ep^{2/3}\el\|f^{-N} u\|_2^2$ and we proceed as above.

\subsection{Analysis of ${\mathscr R}_j, j = 4, 5, 6$}
We have

$${\mathscr R}_6 = - \fr{1}{3}\el \Bigl( 2 \re \la f^{-2N} u'', (a_2)_t u \ra - \re \la f^{-2N} u', (a_2)_t u'\ra\Bigr).$$ 
The term involving $u'$ is easy to be treated by using $\lambda E_N(u)$. For the term with $u''$ we write
$$2 \el \re \la f^{-N} u'', f^{-N} (a_2)_t u \ra = 2 \el \re \la f^{-N} u'', (a_2)_t f^{-N} u \ra + R_7.$$
Next
$$2\el |\re \la f^{-N} u'', (a_2)_t f^{-N} u \ra | \leq \psi  \|f^{-N} u''\|_0^2 + C  t \el \|f^{-N} u\|_2^2.$$
We may absorb both terms on the right hand side taking $\lambda$ large in $\lambda E_N(u)$ and using Proposition 6.1 to estimate $t\|f^{-N}u\|_2^2$ by  $C_1t\|\aee f^{-N} u\|_0^2$ plus lower order terms. The lower order term $R_7$ is easy to be treated by a similar argument.\\

Passing to the analysis of  ${\mathscr R}_5$, notice that a typical term is $\ep^{-2/3} \psi\re \la \beta_1 f^{-N} u', f^{-N} u'' \ra.$
Here the first order operator $\beta_1$ comes from the operator $\ep^{-2/3} \psi\re \la \beta_1 f^{-N} u', f^{-N} u'' \ra.$ and we get a power of $\ep$ from the commutator $f^{2N}[f^{-2N}, \ae]$ (see (5.10)).
Thus we must estimate
$$\ep^{1/3} \Bigl(C \delta^{-1}\el\|f^{-N} u'\|_1^2 + \delta\fr{\el}{t^2} \|f^{-N} u''\|_0^2\Bigr), \: \delta > 0.$$
For the term involving $\|f^{-N} u'' \|_0^2$ we take $\delta$ small and use the corresponding positive term in ${\mathscr R}_1 + {\mathscr R}_2$, while for the term including $\|f^{-N} u'\|_1^2$ we exploit $\lambda E_N(u)$ with large $\lambda$.\\
  For  $\ep^{-2/3} \psi\re \la \gamma_1 f^{-N} u', f^{-N} u'' \ra$ we repeat the same argument since first order operator $\gamma_1$  comes from $[f^{-2N}, \ae] f^{2N}$. The analysis of 
$\fr{4}{3} \el \re \la f^{-2N} \hat{\alpha}_1^{\ep} u', u''\ra$ is easer since we do not have a factor $\psi$. Finally, the analysis of ${\mathscr R}_4$ follows the same argument as above and  all terms in ${\mathscr R}_4$ can be absorbed by $\lambda E_N(u)$.

Thus we finished the analysis of all terms in $\sum_{j=1}^{10} {\mathscr R}_j + \sum_{\nu = 1}^3 B_{\nu}.$

\subsection{ Analysis of lower order terms} 

The analysis of lower order term in (\ref{eq:final}) is easy since they are generated by lower order terms including only derivatives $D_t^2,\:D^2_{t, x_{j}},\:D_t,\: D_{x_j}$, etc. For this purpose we may use a part of $\lambda E_N(u)$ for example $\fr{\lambda}{3} E_N(u)$ and  take into account the estimate (\ref{eq:mul}), where we have a term 
$$\psi \|f^{-k} ( u'' + \fr{1}{3}(t \aee + \ep^{-2/3} \ae)u)\|_0^2$$
which will appear with a big coefficient $\fr{\lambda}{3}$.
For example, we have
$$ | 2 \im \psi \la f^{-2N} u_{t, x_j},  u'' + \fr{1}{3}(t \aee + \ep^{-2/3} \ae)u\ra | $$
$$\leq \psi \|f^{-N} u'\|_1^2 + \psi \|f^{-N} (u'' + \fr{1}{3}(t \aee + \ep^{-2/3} \ae)u)\|_0^2$$
and we have a control with big parameter ${\mathcal O}(\lambda)$ of both terms in the right hand side of $\fr{\lambda}{3} E_N(u).$ The analysis of other terms is completely similar and we leave the details to the reader. The terms with second derivatives $D^2_{x_i, x_j}$ cannot be treated by this argument and for this purpose we have examined $\sum_{\nu = 1}^3 B_{\nu}$ by a more sophisticated technical tools.
This completes the estimate of the errors terms.\\

As a consequence,(\ref{eq:final}) can be rewritten as a true energy estimate as

\begin{eqnarray}
\label{eq:7.7}
7 \el \| f^{-N} \mathcal{P}u \|^{2} \geq 
\partial_{t} \bigg( \mathscr{E}_{N} (u) +  \frac{N}{27} \psi \| f^{-N-1}  u' \|_0^{2} + \frac{N}{27}  \psi \| f^{-N-2} u \|_0^{2}\notag \\
  + \frac{N}{27} C_3  \el  \| f^{-N-1} u \|^{2}_{1} \bigg)
+ \lambda K_1 E_{N}(u) 
\notag\\
+  K_2 \psi \Bigl[\|f^{-N-1/2}u''\|_0^2 + \| f^{-N-1/2} u'\|_1^2 \
 + t\|f^{-N-1/2} u\|_2^2\Bigr]
\notag\\ 
+ N K_3 \Bigl[\psi \|f^{-N- 1} u'\|_0^2  + N\psi\|f^{-N - 3/2} u'\|_0^2 + \psi\|f^{-N- 2} u\|_0^2  \notag \\
+ \psi N  \|f^{-N-5/2} u\|_0^2
+ t \el\|f^{-N - 3/2} u\|_1^2\Bigr]
\notag \\
+ \lambda N K_4 \psi  \bigg\{ \|f^{-N - 1} u'\|_0^2 + \|f^{-N - 2} u\|_0^2 + t \|f^{-N - 1} u\|_1^2\bigg\}, 
\end{eqnarray}
where  $ K_{j},\: j = 1,...,4, $ are suitable positive constants independent of $
\lambda $ and $ N $.

\subsection{Estimates of the terms on the boundary $s = T$}

\def\eT{e^{-\lambda T}}
Assume that $0 \leq s < T \leq 1$ and $u = D_t u = D_t^2 u = 0$ when
$t = s.$ We take $T$ sufficiently small and we integrate in (\ref{eq:7.7}) from $t$ to $T$ with respect
to $s$. As a result we obtain integrals $\int_t^T(...)ds$ and for $s = T$ the
following boundary terms
\begin{eqnarray} 
\label{eq:7.8}
 {\mathscr E}_N((u(T,.)) + \fr{N}{27}\Bigl( \psi(T) \| f^{-N-1}  u'(T, .) \|_0^{2} +   \psi(T) \| f^{-N-2} u(T, .) \|_0^{2}\notag \\
  +  C_3  e^{-2\lambda T}  \| f^{-N-1} u(T, .) \|^{2}_{1} \Bigr).
\end{eqnarray}
Recall that the argument of the previous section yields
\begin{equation} \label{eq:7.9}
\mathscr{E}_{N}(u(T, .) \geq E_N(u) + T \ep^{2/3} A^{(2)}_{N}(u(T, .))+ 2 e^{-2 \lambda T}\ep^{1/3}T\im \la f^{-N} \be u(T, .), f^{-N} u'(T, .)\ra.
\end{equation}
It is clear that $(T/3 + \la \xi \ra ^{-2/3})^{-N} \leq c_2 
(T/3 + \la \xi \ra^{-2/3})^{-N - 1}$ with $c_2 > 0$ independent on $\xi$ and
$T$. Hence we have a control on the norms $\|(f^{-N} u)(T, .)\|_1,\: \|(f^{-N} u')(T,.)\|_0$, etc. On the other hand, in $E_N(u(T, . ))$ we have a positive term $T\|\a0 (f^{-N}u)(T, .)\|_0^2$. Thus we can repeat the argument of subsection 7.6 to absorb the term involving $b_3$. Since we do not have a big coefficient $N$ in (\ref{eq:7.9}),  it suffices to take $\ep$ and $T$ small. For the term $T\ep^{2/3} A^{(2)}_N(u(T, .))$ we are going to repeat the analysis of subsection 7.4. Taking $\ep$ and $T$ small and exploiting the term $T\|\a0 (f^{-N}u(T, .)\|_0^2$ in $E_N(u(T, .)$, we absorb this term.

Finally, the contribution of the boundary terms is bounded from below
by a positive constant and we may neglect them.

\section{A priori estimate}
%
%
\setcounter{equation}{0}
\setcounter{thm}{0}
\setcounter{prop}{0}  
\setcounter{lem}{0}
\setcounter{cor}{0} 
\setcounter{defn}{0}
\setcounter{rem}{0}
%

 For the function $f^{-1}$  we apply the inequalities
$$f^{-1} = \frac{(1 + |\xi|^2)^{1/3}}{\fr{t}{3}(1 + |\xi|^2)^{1/3} + 1} \leq \frac{(1 + |\xi|^2)^{1/3}}{1 + \fr{t}{3}}, \: t \geq 0,$$
$$f^{-1} \geq \frac{1}{1 + t},\: 0 \leq t \leq T \leq 1.$$
Therefore from (\ref{eq:7.7}) and the analysis in the Section 7 we deduce for $\lambda \geq \lambda_0$ the estimate
\begin{eqnarray} \label{8.1}
\lambda \int_t^T e^{-2\lambda s - 2N \log (1 + s)} \Bigl(\sum_{k=0}^2 s^{1-k}\|\partial_t^k u(s, .)\|_{(2-k)}^2 
+ \sum_{k = 0}^1 \|\partial_t^{k} u(s, .)\|_{(1-k)}^2\Bigr) ds \nonumber \\
\leq C_0 \int_t^T e^{-2\lambda s- 2N \log (1+\fr{s}{3})}\|{\mathcal P} u(s, .)\|^2_{(2N/3)}ds,
\end{eqnarray}
where $\|.\|_{(m)}$ is the $H_{(m)}$ norm in  $\R^n$ for fixed $m.$

Of course, we have a negative power of $s$ only in front of the norm  $\|u''(s, .)\|_0^2$ and we may estimate from below this term without a power of $s$. On the other hand, the norm $\|u(s, .)\|_{(2)}^2$ appears with a coefficient $s$.

\begin{rem} It is not useful to use the estimate $f^{-1}  \leq \frac{3}{t}$ to bound the term $\|f^{-N + 1/2} {\mathcal P}u \|^2$ in $($\ref{eq:7.7}$)$.
If we did then, in $($\ref{8.1}$)$, we would have the integral
$$
\int_t^T e^{-2\lambda s} \frac{1}{s^{2N}}\|{\mathcal P} u(s,
.)\|^2_{(0)}ds
$$
and as $t \to 0$ this would produce no uniform estimates with respect to $t \geq 0.$
\end{rem}

\def\Ls{\Lambda_p}

To estimate the high order derivatives with respect to $x$, consider
the operator $(1 + |D_x|^2)^{\12 p} = \Ls,\: p > 0$ and write 
$$ \Ls P u = D_t^3 ( \Ls u) - t \Bigl(a_2 + [\Ls, a_2] \Ls^{-1}\Bigr)D_t(\Ls u) + \Bigl(b_2  + [\Ls, b_2]\Ls^{-1}\Bigr)(\Ls u) $$
$$ + t \Bigl(a_1 + [\Ls, a_1] \Ls^{-1}) D_t^2(\Ls u) +....$$
Moreover, we observe that the ``perturbations''  $[\Ls, a_2]
\Ls^{-1},\:[\Ls, b_2]\Ls^{-1},\: [\Ls, a_1]\Ls^{-1}$ have order lower
than the terms $a_2, b_2$ and $a_1$, respectively.  Then $v = \Ls u$
satisfies an equation of the type studied above and, moreover,
$(D_t^2 v) (t, x) = (D_t v)(t, x) = v(t, x) = 0.$  Going back to the differential operator $P$, we get the
following 

\begin{thm} 
Assume that $(D_t^2 u) (t, x) = (D_t u)(t, x) = u(t, x) = 0$ and let
$0 \leq t \leq s \leq T$ with a small $T > 0$. Then for every $p \in \R$ there exist $\Delta_p$ and a constant
$C_{p}$  so that for $\lambda \geq \Delta_p, \: N = \fr{13}{2} \Pi +
N_0$ and $u \in C_0^{\infty}(\R^{n + 1})$ we have the estimate 
\begin{eqnarray} 
\label{9.2}
\lambda \int_t^T e^{-2\lambda s- 2N \log(1+s)} \Bigl(\sum_{k = 0}^2 s^{1-k}\|\partial_t^k u(s, .)\|_{(2-k +p)}^2 + \sum_{k= 0}^1 \|\partial_t^k u(s, .)\|_{(1-k + p)}^2 \Bigr)ds \nonumber \\
\leq C_{p} \int_t^T e^{-2\lambda s - 2N \log(1 +\fr{s}{3})}\|P u(s, .)\|^2_{(2N/3 + p)}ds.
\end{eqnarray}
\end{thm}
\def\ell{e^{\lambda t}}
Next we discuss the estimates for functions $u(t, x)$ satisfying the
boundary conditions $D_t^2 u(T, x)$  $ = D_t u(T, x) = u(T, x) = 0$ for $T
> 0$. To do this, we proceed along the same lines as above. We use the function $\varphi(t) = t e^{2\lambda t}$ and we take the scalar
 product of
$f^{2N}(t, D_x){\mathcal P}u$ with the operator $L u = \varphi(t) (D_t^2 - \fr{1}{3} (t \aee u+ \ep^{-2/3} \ae u))$. Thus we 
consider 
$$ 
2 \im \la \varphi(t)  f^{2N}(t, D_{x}) \mathcal{P} u , D_t^2 - \fr{1}{3} (t \aee u + \ep^{-2/3} u) \ra.
$$ 

Notice that we have changed $-\lambda$ to $\lambda$, $f^{-2N}$ to
$f^{2N}$ and we have a $+$ sign in front of the scalar product.
We then handle the terms in the same way as we did in Sections 5-8. For
example, 
$$
2 \Im \frac{1}{i} \la \varphi(t) f^{2N} \partial_t^3 u, \partial_t^2 u \ra
= -2 \Re \la \varphi(t) f^{2N}\partial_t^3 u, \partial_t^2 u \ra 
$$ 
$$
= - \partial_t\Bigl( \varphi(t)  \| f^{2N} \partial_t^2 u \|^2\Bigr) + 2N
\varphi(t) \| f^{N} \partial_t^2 u \|^2 + \varphi'(t) \| f^{N} \partial_t^2 u \|^2.
$$  
Thus we obtain an analog of (\ref{eq:final}) in the form
\def\ell{e^{2\lambda t}}
\begin{eqnarray}
\label{eq:8.2}
7\ell \| f^{N} \mathcal{P}u \|^{2} \geq 
-\partial_{t} \bigg(\mathscr{E}^*_{ N} (u) + \frac{N}{27} \ell \| f^{N-1}  u' \|_0^{2} 
 +\frac{N}{27}  \ell \| f^{N-2} u \|_0^{2} \notag \\
 + \frac{N}{27} C_3 t \ell \| f^{N-1} u \|^{2}_{1}\bigg)\notag \\
+ 2\lambda {\mathscr E}^*_{N}(u) + \ep^{1/3} t\ell \fr{4N}{3} \im \la f^{2N - 1} b_3 u, u'\ra
\notag\\
+  N \varphi \Bigl[\frac{5}{27}\|f^{N-1/2}u''\|_0^2 + \frac{10}{27} \re \la f^{N - 1/2} (t\a0 + \ep^{-2/3}\ae) u' , f^{N-1/2} u'\ra\notag \\
 +\frac{4}{9} \|f^{N - 1/2}(u'' +\fr{1}{2} (t\a0  + \ep^{-2/3} \ae) u)\|_0^2
+ \frac{1}{9} \|f^{N - 1/2} (t \a0 u+ \ep^{-2/3} \ae u)\|_0^2\Bigr]\notag \\
+\frac{N}{9} \Bigl[\frac{\varphi'}{3} \|f^{N - 1} u'\|_0^2 + \frac{2N -5}{9} \varphi\|f^{N - 3/2} u'\|_0^2 + \frac{\varphi'}{3}\|f^{N - 2} u\|_0^2 \notag \\
+ \frac{2N + 1}{9}\varphi \|f^{N-5/2} u\|_0^2 + \frac{2C_3}{3} \lambda \ell \|f^{-N - 1}u\|_1^2 \notag \\
+ \frac{(2N + 1)C_3}{9}\ell\|f^{N - 3/2} u\|_1^2  
+ \frac{ C_3}{3}  \ell \|f^{N - 1/2} u'\|_1^2\Bigr]
\notag \\
+ {\mathcal Q}^*_1 +  \sum_{j=3}^{10} {\mathscr R}^*_{j} + \sum_{\nu = 1}^3 B_{\nu}^*
+ \text{lower order terms},  
\end{eqnarray}
where ${\mathcal Q}^*_1,\:{\mathscr R}^*_j,\: B_{\nu}^*$ are obtained from the corresponding terms
${\mathcal Q}_1,\:{\mathscr R}_j,\:B_{\nu}$ changing $N$ by $- N$ and
\begin{eqnarray}
{\mathscr E}^*_N(u) = \varphi \Bigl[\| f^{N} u''\|^{2} 
                  +(1 - \theta) \re \la f^{2N} (t \aee + \ep^{-2/3} \ae) u', u' \ra \nonumber \\
+ \theta \|f^{N} (t \aee u + \ep^{-2/3} \ae ) u\|_0^2
+ \theta 2\re \la f^{2N} u'', (t \aee + \ep^{-2/3} \ae) u\ra \Bigr]  
\nonumber \\                                                  
+ \epsilon^{1/3} t^3 \ell 2 \im \la f^{2N} b_{3}^{\epsilon} u,
u'\ra .   
\end{eqnarray}

 We repeat the argument of the
previous sections and we integrate with respect to $s$ from $t$ to
$T$, assuming $0 \leq t < T \leq 1.$
Thus we obtain an a priori estimate involving the "weights" $f^{2N - k},\: -1 \leq k \leq 5/2$.

\bigskip

On the other hand,
$$f^{2N} \leq (t +1)^{2N},\: 0 \leq t < T \leq 1,$$
$$ f^{2N}  \geq (1+ \fr{t}{3})^{2N}(1 + |\xi|^2)^{-2N/3}. $$
Consequently, for $\lambda \geq \lambda_0 > 0$, we deduce

\begin{eqnarray} 
\label{eq:finalest}
\lambda \int_t^T e^{2\lambda s + 2N \log(1+\fr{s}{3})} \Bigl(\sum_{k = 0}^2 s^{3-k}\|\partial_t^k u(s, .)\|_{(2- k-2N/3)}^2 + \sum_{k=0}^1 \|\partial_t^k u(s, .)\|_{(1-k-2N/3)}^2\Bigr)ds\nonumber \\
\leq C_0 \int_t^T e^{2\lambda s + 2N\log (1+s)} s^2\|{\mathcal P}u(s, .)\|^2_{(0)}ds.
\end{eqnarray}

Finally, we may make a shift in the Sobolev indices for this estimate
and consider $\|. \|_{(p)}$ norms. Thus we eventually obtain the
following 
\begin{thm} Assume that $(D_t^2 u) (T, x) = (D_t u)(T, x) = u(T, x) = 0$ and let $0 \leq t \leq s \leq T$ with a small $T > 0$. Then for every $p \in \R$ there exist $\Delta_p$ and a constant $C_{p}$  so that for $\lambda \geq \Delta_p, \: N = \fr{13}{2}\Pi + N_0$ and $u \in C_0^{\infty}(\R^{n +1})$ we have the estimate
\begin{eqnarray} 
\lambda \int_t^T e^{2\lambda s + 2N \log(1+\fr{s}{3})}\Bigl(\sum_{k = 0}^2 s^{3-k}\|\partial_t^k u(s, .)\|_{(2-k+ p)}^2 + \sum_{k=0}^1 \|\partial_t^k u(s, .)\|_{(1-k+ p)}^2\Bigr)ds\nonumber \\
\leq C_{p} \int_t^T e^{2\lambda s + 2N\log(1+ s)} s^2\|P u(s, .)\|^2_{(2N/3 + p)}ds.
\end{eqnarray}
\end{thm}
For the local uniqueness result it is more convenient to have estimates
for the operator ${\mathcal P}^*$, the adjoint to ${\mathcal P}$. We
have 
$$
{\mathcal P}^* = D_t^3 u - (t a_2 + \alpha) D_t  + t^2 b_3 +
\bar{b}_2 + i a_2 + t \alpha_2 + {\text lower\:order\:terms},
$$ 
where $\alpha_2$ is a second order operator with respect to $x$. The subprincipal symbol of ${\mathcal P}^*$ for $\rho = (0, x, \xi)$ has the form
$$
\frac{i}{2} a_2(0, x, \xi) + \bar{b}_2(0, x, \xi) = \overline{p_2'}(0,
x, \xi).
$$
Thus the number $\Pi^*$ corresponding to ${\mathcal P}^*$ coincides
with $\Pi$  and Theorems 8.1 and 8.2 hold for the operator ${\mathcal
  P}^*$ changing, if it is necessary, $\Lambda_p$ and $C_p$.

\bigskip

Applying Theorems 8.1 for $P$ and Theorem 8.2 for $P^*$, we can
establish an existence and uniqueness results for the Cauchy problem
in $G = \{(t, x):\: 0 \leq t \leq T, x \in U_{x_0}\}$ with sufficiently small
$T$. To fix the notations, we say that $f \in H^{loc}_{(q, s)}(G)$ if
$\varphi f \in H_{(q, s)}(G)$ for all $\varphi \in C_0^{\infty}
(\R^{n+1})$ and $g \in H_{(q, s)}(\R^{n+1})$ if 
$$
\|g\|_{(q, s)}^2 = (2\pi)^{-(n + 1)}\int  (1 + |\tau|^2)^q (1 +
\xi|^2)^s |\hat{g}(\tau, \xi)|^2 d\tau d\xi < \infty.
$$ 
Since $P$ is strictly hyperbolic for $0 < t \leq T$, we may repeat
with minor modifications the proof of Theorem 23.4.5 in \cite{h3} to
obtain the following 
\begin{thm} Let $P$ be a differential operators with $C^{\infty}$
  coefficients in $G =[0, T] \times U_{x_0}$ satisfying the hypothesis
  $(H_0) - (H_2)$ and let $V_{x_0} \subset U_{x_0}$. 
For  $T$ sufficiently small and for $f \in H^{loc}_{(0, s)}(G)$ having
support in $\bar{G}$ one can find an unique $u \in H^{loc}_{(2, s +2 -
  2N/3)}(G)$ with support in $\bar{G}$ so that $Pu = f$ in $(0, T)
\times V_{x_o}$. 
\end{thm}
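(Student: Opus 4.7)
The plan is to follow the classical duality-plus-uniqueness scheme as in Theorem 23.4.5 of \cite{h3}, with the two energy estimates from Theorems 8.1 and 8.2 playing the roles that the strictly hyperbolic estimates play there. The forward estimate for $P$ gives uniqueness, while the backward estimate for $P^*$ gives existence through a Hahn--Banach argument. The loss of regularity $2N/3-2$ in $x$ is absorbed into the Sobolev indices by shifting $p$ freely in Theorems 8.1 and 8.2; this is why the solution is only required to lie in $H^{\mathrm{loc}}_{(2,s+2-2N/3)}(G)$.

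\emph{Existence.} Fix $f\in H^{\mathrm{loc}}_{(0,s)}(G)$ with support in $\overline{G}$. Choose $\chi\in C_0^{\infty}(U_{x_0})$ with $\chi=1$ on a neighbourhood $V_{x_0}\Subset U_{x_0}$, and apply Theorem 8.2 with $p=-s-2+2N/3$ (so the right-hand side contains $\|P^*v\|_{(0,s)}$-type norms). For every $v\in C_0^{\infty}(\R^{n+1})$ supported in $\overline{G}$ with $D_t^j v(T,\cdot)=0$ for $j=0,1,2$, the estimate from Theorem 8.2 bounds a weighted $L^2$-norm of $v$ (and its $t$-derivatives) by a weighted $L^2$-norm of $P^*v$. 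Define the linear form $L$ on the subspace $\{P^*v : v \in C_0^{\infty}, \ \mathrm{supp}\, v \subset \overline{G}, \ D_t^jv(T,\cdot)=0\}$ by $L(P^*v)=\la \chi f,v\ra$. Theorem 8.2 shows $|L(P^*v)|\le C_p\|\chi f\|_{(0,s)}\cdot\|P^*v\|_{\mathrm{weighted}}$, so $L$ is well defined (by the uniqueness part of the estimate) and continuous. Hahn--Banach and the Riesz representation theorem produce $u\in H^{\mathrm{loc}}_{(2,s+2-2N/3)}(G)$ with $\la u,P^*v\ra = \la \chi f,v\ra$ for all admissible $v$. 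Testing against $v\in C_0^{\infty}((0,T)\times V_{x_0})$ gives $Pu=f$ in $(0,T)\times V_{x_0}$; the vanishing boundary conditions at $t=T$ imposed on the test $v$ guarantee $\mathrm{supp}\,u\subset\overline{G}$ (no contribution from $t>T$).

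\emph{Uniqueness.} Suppose $u\in H^{\mathrm{loc}}_{(2,s+2-2N/3)}(G)$ has support in $\overline{G}$ and satisfies $Pu=0$ in $(0,T)\times V_{x_0}$. The support condition means $D_t^j u(0,\cdot)=0$ in the trace sense for $j=0,1,2$ (these traces exist because $u\in H^{(2,\cdot)}$). Approximate $u$ by $u_\delta = \rho_\delta *_x u$ mollified in $x$ only; then $u_\delta$ is smooth in $x$, still vanishes identically in $t<0$, and $Pu_\delta = [\rho_\delta*_x,P]u$ which converges to $0$ in the appropriate norm on the right-hand side of Theorem 8.1. Applying Theorem 8.1 (with appropriate shift $p$) to $u_\delta$ and passing to the limit $\delta\downarrow 0$, the left-hand side controls $u_\delta$ in the weighted $H^{(2,\cdot)}$ norm and we conclude $u=0$ in a neighbourhood of $t=0$. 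A bootstrap using finite propagation speed for the strictly hyperbolic operator $P$ on $\{t\ge\delta_0>0\}$ (via the classical estimates, e.g.\ Section 23.4 in \cite{h3}) then extends uniqueness throughout $G$ if $T$ is small enough that the characteristic cones through $V_{x_0}$ remain inside $U_{x_0}$.

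\emph{Reduction to the model situation.} The estimates of Sections 4--8 were carried out after the change of variables that puts the principal symbol in the normalised form and after the scaling $t=\ep^{2/3}s$, $x=\ep y$ of Section 4. To conclude in the original coordinates, one reverses these transformations: the scaling is a diffeomorphism of $[0,T]\times V_{x_0}$ onto $[0,T']\times V'_{x_0}$ preserving Sobolev indices (up to fixed constants), and the $(t,x)$-change of variables from Section 2 is valid for $T$ small enough. Since both transformations are invertible with smooth inverses near the triple characteristic point $x_0$, well-posedness transfers back, yielding Theorem 8.3.

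The main obstacle in making this precise is the approximation step in the uniqueness argument: because of the loss $2N/3$ the assumed regularity of $u$ is much lower than what the estimate in Theorem 8.1 nominally controls, so one must check that the commutator $[\rho_\delta*_x,P]u$ converges to zero in the $\|\cdot\|_{(2N/3+p)}$ norm on the right, and that the resulting control on the left is enough to conclude that $u\equiv 0$. This is routine provided $p$ is chosen so that $u\in H^{(2,\cdot)}$ with Sobolev index exceeding $2N/3+p+$ (a fixed constant), which is exactly what the theorem statement allows.
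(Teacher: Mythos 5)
Your overall scheme—existence by a Hahn–Banach duality argument using the backward estimate of Theorem~8.2 for $P^{*}$, uniqueness by Friedrichs mollification in $x$ combined with the forward estimate of Theorem~8.1, and the final unwinding of the scaling and $(t,x)$-change of variables—is exactly the route the paper intends, which is to repeat Theorem~23.4.5 of H\"ormander with the two a priori estimates replacing the strictly hyperbolic ones (the paper itself leaves the details to the reader). One remark on the existence step: with your choice $p=-s-2+2N/3$ in Theorem~8.2 the right-hand side carries the $x$-Sobolev index $2N/3+p=4N/3-s-2$, not $s$ as the parenthetical claims; the correct pairing against $u\in H^{\mathrm{loc}}_{(2,s+2-2N/3)}$ calls for the index $2N/3-s-2$, i.e.\ $p=-s-2$, and in any case the Riesz representation only produces $u$ in an $L^{2}_{t}H^{\cdot}_{x}$-type space, so obtaining the full $H_{(2,s+2-2N/3)}$ regularity requires a short bootstrap using the equation $D_{t}^{3}u=f-(\text{lower order in }t)u$, as in H\"ormander's argument. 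These are bookkeeping points rather than gaps in the method.
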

We leave the details to the reader.

\bigskip

In conclusion  the conjecture for strongly hyperbolic operators with
triple characteristics is true for operators satisfying
$(H_0)-(H_2)$.


\section{Appendix}
In this appendix we discuss the existence of a factorization
\begin{equation} \label{eq:9.1}
((\tau - \beta(t, x, \xi))^2 - D(t, x, \xi))(\tau- \gamma(t, x ,\xi))
\end{equation}
of the principal symbol $p_3(t, x, \tau, \xi)$ having the form (\ref{eq:2.1}).  We suppose that in (\ref{eq:9.1}) the symbols $\gamma$ and $\beta$ are smooth functions homogeneous of order 1 in $\xi$, while $D$ is smooth and homogeneous of order 2 in $\xi$. It is clear that the root $\gamma$ must be real-valued. We suppose that (\ref{eq:9.1}) holds in a conic neighborhood
of a point $(0, x_0, \xi_0)$ or for fixed $\xi_0 \neq 0$ and $(t, x)$ in a neighborhood of $(0, x_0).$ For the counterexamples we will discuss the non existence of a factorization for fixed $\xi_0$. The problem is to see if there exists a smooth real root $\gamma(t, x, \xi)$ of $p_3 = 0$ in a neighborhood of $(0, x_0, \xi_0).$\\

Consider the symbol 
\begin{equation} \label{eq:9.2}
p_3 = \tau^3 - (t a_2(t, x, \xi) + \alpha(x, \xi)) \tau + t^2 b_3(t, x ,\xi)
\end{equation}
with $a_2 (t, x, \xi) \geq c |\xi|^2,\: \alpha(x, \xi) \geq 0$ and $\alpha(0, \xi_0) = 0, \:4(t a_2 + \alpha)^3 \geq 27 t^4 b_3^2$ for $t \geq 0.$

\begin{prop} Assume that $b_3(0, 0, \xi_0) \neq 0.$ Then if $p_3(t, x, \tau, \xi_0)$ is factorizable for $(t, x)$ in a neighborhood of $(0, 0),$ there exists a neighborhood  $U \subset \R^n$ of $0$ such that $\alpha(x, \xi_0) = 0,\: \forall x \in U.$
\end{prop}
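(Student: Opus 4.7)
The plan is to reduce the factorization hypothesis to the existence of a smooth real root $\gamma(t,x)$ of the cubic $p_3(t,x,\tau,\xi_0) = \tau^3 - A(t,x)\tau + B(t,x)$, with $A = t a_2(t,x,\xi_0) + \alpha(x,\xi_0)$ and $B = t^2 b_3(t,x,\xi_0)$, and then to run a Taylor expansion in $t$ to force $\alpha(\cdot,\xi_0)$ to vanish near $0$.

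First I would expand $((\tau-\beta)^2 - D)(\tau - \gamma)$ and match coefficients with $p_3$: vanishing of the $\tau^2$ coefficient gives $\gamma = -2\beta$, while the free term gives $\gamma^3 - A\gamma + B = 0$. Thus existence of the smooth factorization is equivalent to existence of a smooth real root $\gamma(t,x)$ of $p_3$ near $(0,0)$ (the factor $D = A - 3\gamma^2/4$ being automatically smooth). Setting $t = 0$ yields $\gamma(0,x)\big(\gamma(0,x)^2 - \alpha(x,\xi_0)\big) = 0$, so pointwise either $\gamma(0,x) = 0$ or $\gamma(0,x)^2 = \alpha(x,\xi_0)$; since $\alpha \geq 0$ and $\alpha(0,\xi_0) = 0$, on each connected component of $\{x : \alpha(x,\xi_0) > 0\}$ continuity forces $\gamma(0,\cdot)$ to coincide with one of $0$ or $\pm\sqrt{\alpha(\cdot,\xi_0)}$.

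The next step is to rule out the non-zero branch on any component whose closure contains $x = 0$. On such a component write $\gamma(t,x) = \gamma_0(x) + \gamma_1(x) t + \cdots$ with $\gamma_0(x)^2 = \alpha(x,\xi_0)$. Substituting into $\gamma^3 - A\gamma + B = 0$ and collecting the coefficient of $t$ gives $(3\gamma_0^2 - \alpha)\gamma_1 = a_2(0,x,\xi_0)\gamma_0$, i.e.\ $2\alpha\gamma_1 = a_2(0,x,\xi_0)\gamma_0$, hence $\gamma_1(x) = a_2(0,x,\xi_0)/(2\gamma_0(x))$. As $x \to 0$ along the component, $\gamma_0(x) \to 0$ while $a_2(0,0,\xi_0) > 0$ by ellipticity of $a_2$, so $\gamma_1(x)$ blows up, contradicting the smoothness of $\partial_t\gamma(0,\cdot)$. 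Hence $\gamma(0,x) \equiv 0$ in a full neighborhood of $0$.

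With $\gamma(0,\cdot) \equiv 0$ established, I would Taylor expand $\gamma(t,x) = \gamma_1(x)t + \gamma_2(x)t^2 + \cdots$ and substitute once more into $\gamma^3 - A\gamma + B = 0$. The $t^1$ coefficient gives $-\alpha(x,\xi_0)\gamma_1(x) = 0$, and the $t^2$ coefficient then reads $-\alpha(x,\xi_0)\gamma_2(x) - a_2(0,x,\xi_0)\gamma_1(x) + b_3(0,x,\xi_0) = 0$. If $\alpha(\cdot,\xi_0)$ were not identically zero near $0$, a sequence $x_n \to 0$ with $\alpha(x_n,\xi_0) > 0$ would yield $\gamma_1(x_n) = 0$, and then $\gamma_2(x_n) = b_3(0,x_n,\xi_0)/\alpha(x_n,\xi_0)$, which tends to $\infty$ since $b_3(0,0,\xi_0) \neq 0$; this contradicts the boundedness of the smooth function $\gamma_2$ near $0$. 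The main obstacle is the dichotomy at $t=0$ for $\gamma(0,x)$, and in particular the elimination of the $\pm\sqrt{\alpha}$ branch: this step is the only one that is not pure formal Taylor computation, and it relies crucially on the ellipticity of $a_2$ at $(0,\xi_0)$.
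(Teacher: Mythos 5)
Your proof is correct and follows essentially the same strategy as the paper's: reduce the factorization hypothesis to the existence of a smooth real root $\gamma(t,x)$ of $p_3=0$, obtain the dichotomy $\gamma(0,\cdot)\in\{0,\pm\sqrt{\alpha}\}$ at $t=0$, and derive contradictions from the ellipticity of $a_2$ (for the $\pm\sqrt{\alpha}$ branch) and from $b_3(0,0,\xi_0)\neq 0$ (for the zero branch). The execution differs, though. You Taylor-expand $\gamma$ in $t$ and match coefficients of $t^1$ and $t^2$, concluding that $\partial_t\gamma(0,\cdot)$ (respectively $\partial_t^2\gamma(0,\cdot)$) must be unbounded as $x\to 0$ along a sequence with $\alpha(x_n,\xi_0)>0$, contradicting smoothness; the paper instead writes $\gamma = t\rho$ (or $\gamma=\sqrt{\alpha}+t\rho$) with $\rho$ merely continuous and forces a contradiction by evaluating along $t=|\alpha(x_m,\xi_0)|^k$ for a well-chosen $k$ and letting $m\to\infty$. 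Your coefficient-matching version is tighter: it pins down exactly which Taylor coefficient blows up (rather than invoking a sub-case on the limit of $\rho$), and it treats the two sign branches uniformly via the connectedness argument on components of $\{\alpha>0\}$, whereas the paper only writes out the $+\sqrt{\alpha}$ case and tacitly assumes $\sqrt{\alpha}$ is smooth there. Both proofs use precisely the same two non-degeneracy inputs, so this is best viewed as a cleaner rendition of the same idea rather than a new one.
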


{\bf Proof.} Assume that there exists a real-valued function $\gamma(t, x, \xi_0)$ which is a solution of $p_3(t, x, \gamma, \xi_0) = 0$.  Assume that there exists a sequence $x_m \to 0$ such that $a(x_m, \xi_0) \neq 0.$
For $t = 0$ we obtain the equation $\gamma^3 - \alpha \gamma = 0$ and there are two possibilities.\\

(i) $\gamma(0, x, \xi_0) = 0.$ Then $ \gamma = t \rho(t, x, \xi_0)$ with a continuous $\rho(t, x, \xi_0)$ and we get for $t > 0$ the equality
$$ t \rho^3 - \rho - \frac{\alpha}{t}\rho + b_3 = 0.$$
If $\lim_{(t,x) \to (0,0)}\rho(t, x, \xi_0) \neq 0,$ choosing  $t = |\alpha(x_m, \xi_0)|^2$ and letting $m \to 0$, we obtain a contradiction. If $\lim_{(t,x) \to (0,0)}\rho(t, x, \xi_0) = 0$, we take $t = |\alpha(x_m, \xi_0)|$ and passing to the limit $m \to \infty$, we obtain a contradiction with the fact that $b_3(0, 0, \xi_0) \neq 0.$\\

(ii) Let $\gamma(0, x, \xi_0) = \sqrt{\alpha(x, \xi_0)}$, provided $\sqrt{\alpha}$ smooth. Then we have $\gamma = \sqrt{\alpha(x, \xi_0)} + t \rho(t, x, \xi_0)$. In this case we obtain for $t > 0$ the equality
$$2 \frac{\alpha}{t}\rho + 3 \sqrt{\alpha} \rho^2 + t \rho^3 - \rho - \frac{\sqrt{\alpha}}{t} + b_3 = 0.$$
We choose $t = |\alpha(x_m, \xi_0)|$ and passing to the limit $m \to \infty$ we obtain a contradiction.\\

It is interesting that we have an inverse result. 
\begin{prop}
The  symbol $(\ref{eq:9.2})$ with $\alpha(x, \xi)  \equiv 0$ is factorizable in a neighborhood of $(0, x_0, \xi_0).$
\end{prop}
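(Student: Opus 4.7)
\medskip

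\noindent\textbf{Proof proposal.} The strategy is to match coefficients in the desired factorization and reduce the existence of $\beta,\gamma,D$ to a single scalar equation for $\beta$ to which the implicit function theorem applies.

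First I would expand
\[
(\tau-\gamma)\bigl((\tau-\beta)^2-D\bigr)=\tau^3-(2\beta+\gamma)\tau^2+(\beta^2-D+2\beta\gamma)\tau-(\beta^2-D)\gamma
\]
and match with $p_3=\tau^3-ta_2\tau+t^2b_3$. The vanishing of the $\tau^2$-coefficient forces $\gamma=-2\beta$. Substituting this in, the $\tau$-coefficient gives $D=ta_2-3\beta^2$, and the constant term reduces to the single equation
\begin{equation}\label{eq:beta-eq}
8\beta^3-2ta_2\,\beta-t^2 b_3=0.
\end{equation}
Thus it suffices to find a smooth real solution $\beta(t,x,\xi)$ of \eqref{eq:beta-eq} that is homogeneous of degree $1$ in $\xi$ in a conic neighborhood of $(0,x_0,\xi_0)$; then $\gamma:=-2\beta$ and $D:=ta_2-3\beta^2$ will automatically have the required regularity and homogeneity.

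The next step is to remove the degeneracy at $t=0$ by setting $\beta=t\sigma$. Equation \eqref{eq:beta-eq} becomes, after division by $t^2$,
\[
F(t,x,\xi,\sigma):=8t\sigma^3-2a_2(t,x,\xi)\,\sigma-b_3(t,x,\xi)=0.
\]
At $t=0$ this is the linear equation $-2a_2(0,x,\xi)\sigma=b_3(0,x,\xi)$, whose unique solution
\[
\sigma_0(x,\xi)=-\frac{b_3(0,x,\xi)}{2a_2(0,x,\xi)}
\]
is well defined and smooth in a conic neighborhood of $(x_0,\xi_0)$ because $a_2(0,x,\xi)\ge c|\xi|^2>0$. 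Moreover $\partial_\sigma F(0,x,\xi,\sigma_0)=-2a_2(0,x,\xi)\ne 0$, so the implicit function theorem produces a smooth solution $\sigma(t,x,\xi)$ of $F=0$ defined for $(t,x,\xi)$ near $(0,x_0,\xi_0)$ and reducing to $\sigma_0$ at $t=0$. Setting $\beta=t\sigma$ then gives a smooth solution of \eqref{eq:beta-eq}.

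It remains to check the formal properties of the factorization. Homogeneity is verified by inspecting how $F$ transforms under $\xi\mapsto\lambda\xi$: since $a_2$ and $b_3$ are homogeneous of degrees $2$ and $3$, the substitution $\sigma=\lambda\tilde\sigma$ turns $F=0$ into the analogous equation for $\tilde\sigma$ with the same $t$, so by uniqueness $\sigma(t,x,\lambda\xi)=\lambda\,\sigma(t,x,\xi)$. Consequently $\beta,\gamma$ are homogeneous of degree $1$ and $D=ta_2-3\beta^2=ta_2-3t^2\sigma^2$ is homogeneous of degree $2$ in $\xi$. Finally, for $t\ge 0$ sufficiently small, $D=t\bigl(a_2-3t\sigma^2\bigr)\ge \tfrac{c}{2}\,t|\xi|^2\ge 0$, as required. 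I do not anticipate a genuine obstacle: the only delicate point is making sure that the parameter $t$ is factored out \emph{before} applying the implicit function theorem, since at $t=0$ equation \eqref{eq:beta-eq} is entirely degenerate ($\beta=0$ is a triple root); the rescaling $\beta=t\sigma$ is precisely what turns this degeneracy into a regular simple root for $\sigma$ and makes the argument go through.
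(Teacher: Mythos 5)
Your proof is correct. It rests on the same key idea as the paper's: after factoring out the degeneracy via $\beta=t\sigma$ (the paper uses $\gamma=t\rho$, which is the same change of variables since $\gamma=-2\beta$), the cubic becomes a perturbation of a linear equation, and the implicit function theorem yields a smooth real branch. The one place where you diverge is the setup: you match coefficients directly against the factored form $(\tau-\gamma)\bigl((\tau-\beta)^2-D\bigr)$, which immediately produces the scalar equation $8\beta^3-2ta_2\beta-t^2b_3=0$, whereas the paper first invokes Cardano's formula for the three roots of $p_3$ and manipulates the resulting radicals to identify the branch behaving like $t\,b_3/a_2$ before making the substitution and applying the implicit function theorem. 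The Cardano detour in the paper is in fact redundant once the implicit function theorem is deployed, since the latter already singles out and constructs the smooth branch; your coefficient-matching route reaches the same equation with less computation. You also explicitly record the homogeneity of $\sigma$ and the sign $D\ge 0$ for small $t\ge 0$, which the paper leaves implicit. A small polish: since $\gamma=-2\beta$, your $\sigma$ is $-\rho/2$ in the paper's notation, and the equation $8t\sigma^3-2a_2\sigma-b_3=0$ becomes exactly the paper's $t\rho^3-a_2\rho+b_3=0$, confirming that the two parametrizations coincide.
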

{\bf Proof.} 
Clearly, the discriminant $\Delta  = 4 t^3 a_2^3 - 27 t^4 b_3^2$ is positive for small $t > 0$. The three roots of the equation $p_3(t, x, \tau, \xi) = 0$ with respect to $\tau$  have the form (see for instance, \cite{SL})
\begin{equation} \label{eq:9.3}
x_k = -\frac{1}{3} \Bigl( u_k C + \frac{3t a_2}{u_k C}\Bigr), \: k = 1, 2, 3.
\end{equation}
where $u_k$ are the three roots of the equation $u^3 = 1$  and $C$ has the form
$$C = \Bigl( \frac{27 t^2 b_3 + \sqrt{-27 \Delta}}{2} \Bigr)^{1/3}.$$
Our goal is to show that we have a real $C^{\infty}$ smooth root $\gamma(t, x, \xi)$ of $p_3 = 0$ defined for $|t| \leq \epsilon$ and
$(x, \xi)$ in a conic neighborhood of $(x_0, \xi_0).$\\

In our case $C$ becomes
$$ C = 2^{-1/3} \Bigl( 27 t^2 b_3 + 27 \sqrt{-\frac{4}{27} t^3 a_2^3 + t^4 b_3^3}\Bigr)^{1/3}$$
$$=  2^{-1/3}3 \Bigl(t^2 b_3 + \sqrt{- t^3 \alpha^3 + t^4 b_3}\Bigr)^{1/3} 
=  2^{-1/3}3 \Bigl( i (\alpha t)^{3/2}(1 - \frac{b_3^2}{\alpha^3} t)^{1/2} + t^2 b_3\Bigr)^{1/3},$$
where $\alpha = \frac{4^{1/3} a_2}{3}.$ Thus
$$C = 2^{-1/3} 3 (\alpha t)^{1/2} \Bigl(i \Bigl( 1 - \frac{b_3^2}{\alpha^3} t\Bigr)^{1/2} + \frac{b_3}{\alpha^{3/2}} t^{1/2}\Bigr)^{1/3}.$$
To obtain a real root $\gamma(t, x, \xi)$ , we take $u_k = 1$ in (\ref{eq:9.3}) and one deduces
$$C + \frac{3 t a_2}{C} = \frac{ C^2 + 3 t a_2}{C}.$$
Now since $i^{2/3} = (-1)^{1/3} = -1$, we get
$$C^2 + 3t a_2 = \Bigl[ - 9 \alpha 4^{-1/3} \Bigl( (1 - \frac{b_3^2}{ \alpha^3} t )^{1/2} - \frac{i b_3}{\alpha^{3/2}} t^{1/2}\Bigr)^{2/3} + 3 a_2\Bigr] t$$
$$= t \Bigl[ -9 \alpha 4^{-1/3} + 3a_2 + \frac{ 4^{-1/3}6 i b_3}{ \alpha^{1/2}} t^{1/2} + {\mathcal O} (t)\Bigr] = t^{3/2} \Bigl[ \frac{4^{-1/3} 6 i b_3}{ \alpha^{1/2}} + {\mathcal O} (t^{1/2})\Bigr] .$$
Dividing by $C$, we get
$$\gamma = -\frac{1}{3}\frac{C^2 + 3 t a_2}{C} =  t \Bigl[ \frac{ 4^{-1/3} 2^{1/3} 2 b_3}{3 \alpha} + {\mathcal O}(t^{1/2})\Bigr] = t \Bigl[ \frac{ b_3}{a_2} + {\mathcal O}(t^{1/2})\Bigr] .$$
Consequently, the real root $\gamma(t, x, \xi)$ is derivable at $t = 0$ and $\pa_t \gamma\vert_{t = 0}  = \frac{b_3}{a_2}.$ Let $\gamma = t \rho.$ Therefore  $t\rho^3 -  a_2 \rho + b_3 = 0.$

Next, consider the function $F(\rho, t, a_2, b_3) = t\rho^3 -  a_2 \rho + b_3.$ Since
$$\frac{\partial F}{\partial \rho}\big\vert_{t = 0} =   - a_2 \neq 0,$$
 by the implicit function theorem we conclude that for small $t$ the function $\rho(t, a_2, b_3)$ is smooth. This implies that the function $\gamma(t, x, \xi)$ is smooth and we have a factorization

$$p_3 = ((\tau - a(t, x, \xi))^2 - b(t, x, \xi))(\tau - \gamma(t, x, \xi))$$
with $a = -\frac{\gamma}{2}$ and $b = t a_2 - 3 a^2.   $


\end{document}